\DeclareMathOperator{\Ad}{Ad}
\DeclareMathOperator{\ad}{ad}
\DeclareMathOperator{\Aut}{Aut}
\DeclareMathOperator{\GL}{GL}
\DeclareMathOperator{\Ric}{Ric}
\DeclareMathOperator{\Mark}{Mrk}
\DeclareMathOperator{\Span}{span}
\newcommand{\fr}{\mathfrak}
\newcommand{\al}{\alpha}
\newcommand{\be}{\beta}
\newcommand{\bb}{\mathbb}
\DeclareMathOperator{\rnk}{rk}
\DeclareMathOperator{\SO}{SO}
\DeclareMathOperator{\Sp}{Sp}
\DeclareMathOperator{\SU}{SU}
\DeclareMathOperator{\U}{U}
\DeclareMathOperator{\G}{G}
\DeclareMathOperator{\F}{F}
\DeclareMathOperator{\E}{E}
\newcommand{\thickline}{\noalign{\hrule height 1pt}}
 \newtheorem{lemma} {Lemma} [section]
\newtheorem{theorem}[lemma]{Theorem} 
\newtheorem{prop} [lemma]{Proposition}  
\newtheorem{definition}[lemma] {Definition} 
\newtheorem{corol}[lemma] {Corollary} 
\newtheorem{example}[lemma] {Example}
\begin{document}

\title{Homogeneous Einstein metrics on generalized flag manifolds with
five isotropy summands} 
\author{Andreas Arvanitoyeorgos, Ioannis Chrysikos, and Yusuke Sakane}
\address{University of Patras, Department of Mathematics, GR-26500 Rion, Greece}
\email{arvanito@math.upatras.gr}
\address{Masaryk University, Department of Mathematics and Statistics, Masaryk University, Brno 611 37, Czech Republic}
\email{chrysikosi@math.muni.cz}
 \address{Osaka University, Department of Pure and Applied Mathematics, Graduate School of Information Science and Technology, Toyonaka, 
Osaka 560-0043, Japan}
 \email{sakane@math.sci.osaka-u.ac.jp}
\medskip
\noindent
 \thanks{The second  author    was full-supported   
  by Masaryk University under the Grant Agency of Czech Republic, project no. P 201/12/G028}

   \begin{abstract}
We construct the homogeneous Einstein equation for generalized flag manifolds $G/K$ of a compact simple Lie group $G$  whose isotropy representation  decomposes into five inequivalent irreducible $\Ad(K)$-submodules.  
 To this end we apply a new technique  which is based on a fibration of a flag manifold 
over another flag manifold and the theory of Riemannian submersions. 
We classify all generalized flag manifolds with five isotropy summands,  
and we use  Gr\"obner bases to study the corresponding polynomial systems for the Einstein equation.
For the generalized flag manifolds
$\E_6/(\SU(4)\times \SU(2)\times\U (1)\times\U (1))$ and 
 $\E_7/(\U(1)\times\U(6))$  we find explicitely
all invariant Einstein metrics up to isometry.  For the generalized flag manifolds 
$\SO(2\ell +1)/(\U(1)\times\U (p)\times\SO(2(\ell -p-1)+1))$ and 
 $\SO(2\ell)/(\U(1)\times\U (p)\times\SO(2(\ell -p-1)))$ we prove existence of at least two non K\"ahler-Einstein metrics.  For small values of $\ell$ and $p$ we give the precise
 number of invariant Einstein metrics.
  
  \medskip
\noindent 2000 {\it Mathematics Subject Classification.} Primary 53C25; Secondary 53C30.

\medskip
\noindent {\it Keywords}:    Homogeneous space, Einstein metric, Riemannian submersion,   flag manifold, isotropy representation.
   \end{abstract}

\maketitle


%

 \section{Introduction}
\markboth{Andreas Arvanitoyeorgos, Ioannis Chrysikos and Yusuke Sakane}{A new method for homogeneous Einstein metrics on generalized flag manifolds}

A Riemannian manifold $(M, g)$ is called Einstein if it has constant Ricci curvature, i.e. $\Ric_{g}=\lambda\cdot g$ for some $\lambda\in\bb{R}$.   
We are concerned with homogeneous Einstein metrics on reductive homogeneous spaces whose isotropy representation decomposes into a direct sum
of irreducible non equivalent summands.  Then the Einstein equation reduces to a non linear algebraic system of equations.
The computation of the Ricci tensor is in general a difficult task, especially when the number of isotropy summands increases.
In this paper we introduce a method for computing the Ricci tensor for a homogeneous space via Riemannian submersions, and we apply this
for a large class of homogeneous spaces the generalized flag manifolds. 
These are compact homogeneous spaces of the form $G/K=G/C(S)$, where $G$ is a compact, connected semisimple Lie group and $C(S)$ is the centralizer of a torus $S\subset G$.  These spaces exhaust all compact  simply connected homogeneous K\"ahler manifolds of a compact, connected and semisimple Lie group.  Their classification is based on the  painted Dynkin diagrams (cf. \cite{Ale}, \cite{AA})  and their K\"ahler geometry is very interesting on its own right (cf. \cite{AP}, \cite{Bo}).  For example, $M=G/K=G/C(S)$ admits a finite number  of invariant complex structures, and  for each complex structure  there is a unique homogeneous K\"ahler--Einstein metric.
   
    Nowadays, homogeneous Einstein metrics on flag manifolds have been better understood.  They have been completely classified   for any flag manifold  $M=G/K$ (of a compact simple Lie group $G$) with two (\cite{Chry1}, \cite{Sakane}),     three (\cite{Arv}, \cite{Kim})  and four isotropy summands (\cite{Chry2}, \cite{ACS1}, \cite{ACS2}).     For full flag manifolds corresponding to classical Lie groups the existence problem has  also been studied by several authors  (cf. \cite{Arv}, \cite{DSN}, \cite{Sak}), but  a full classification is still unknown, except for some low dimensional cases.  On the other hand,  in a recent work of the authors (\cite{ACS3})   all $\G_2$-invariant Einstein metrics were obtained on the exceptional full flag manifold $\G_2/T$ (a homogeneous space with six isotropy summands).   However,  we are still far from  general results and a complete classification of  invariant Einstein metrics seems to be difficult (by means of the traditional methods).  
    
    In the present paper we classify flag manifolds whose isotropy representation  decomposes into five irreducible $\Ad(K)$-submodules
   \begin{equation}\label{isodec}
\fr{m}=T_{o}M=\fr{m}_1\oplus\fr{m}_2\oplus\fr{m}_3\oplus\fr{m}_4\oplus\fr{m}_5,
\end{equation}
and use the new method to compute the Ricci tensor for each of these spaces.  
Then we study the existence of non K\"ahler Einstein metrics.
 The first results about Einstein metrics on flag manifolds with five isotropy summands were obtained in a recent work of the second author \cite{Chry}, where he studied $SO(7)$-invariant Einstein metrics  for flag manifolds of the form $SO(7)/K$.   
 Also, in \cite{CS}  the last two authors classified all  homogeneous Einstein metrics for the (unique) exceptional flag manifold $G/K$ with second Betti number $b_{2}(M)=1$, whose isotropy representation satisfies (\ref{isodec}). This flag manifold corresponds to $G=\E_8$. 
 As we will see in this paper there are also flag manifolds with five isotropy summands with $b_2(M)=2$. 
  In fact  the cases $b_{2}(M)=1$ or $b_{2}(M)=2$ exhaust all flag manifolds with five isotropy summands, both classical and exceptional.
 
 Recall that any flag manifold $G/K$ of a compact simple Lie group $G$ is determined by a choice of a pair $(\Pi, \Pi_{0})$, where $\Pi$ is a system of simple roots for $G$ and $\Pi_{0}\subset\Pi$.  By painting black the nodes in the Dynkin diagram  $\Gamma(\Pi)$ of $G$ corresponding to the simple roots of the set $\Pi\backslash\Pi_{0}$, we obtain the {\it painted Dynkin diagram} of $G/K$.  The semisimple part of the reductive subgroup $K$ is obtained by the subdiagram of white roots, and any black root gives rise to a $\U(1)$-component  (the $\U(1)$-components form the center of $K$ whose dimension is equal to  the second Betti number of $M$, see Section 3).  In terms  of painted Dynkin diagrams, flag manifolds $G/K$ of a simple Lie group $G$ whose tangent space   $\fr{m}=T_{o}(G/K)$ decomposes as (\ref{isodec})  can be obtained  as follows: 
  
   \smallskip
 
 {\bf (a)} Paint black one  simple root of Dynkin mark\footnote{The Dynkin mark of a simple root $\al_{i}\in\Pi$  $(i=1, \ldots, \ell)$ is  the  positive integer   $m_{i}$ in the expression of the highest root $\widetilde{\al}=\sum_{k=1}^{\ell}m_{k}\al_{k}$ in terms of simple roots.  We will denote by $\Mark$ the function $\Mark : \Pi\to\bb{Z}^{+}$, $\al_{i}\mapsto m_{i}$.} 5, that is 
 $$\Pi\backslash\Pi_{0}=\{\al_{p} : \Mark(\al_{p})=5\}.$$
   As mentioned earlier, case (a) appears only for $G=\E_{8}$. 
 
 {\bf  (b)} Paint black two simple roots, one of Dynkin mark 1 and  one of Dynkin mark 2,   that is $$\Pi\backslash\Pi_{0}=\{\al_{i}, \al_{j} : \Mark(\al_{i})=1, \ \Mark(\al_{j})=2\}.$$   
 
 {\bf  (c)} Paint black two simple roots both of Dynkin mark 2,  that is $$\Pi\backslash\Pi_{0}=\{\al_{i}, \al_{j} : \Mark(\al_{i})= \Mark(\al_{j})=2\}.$$  
 
 \smallskip
 
\noindent   We call the pairs $(\Pi, \Pi_{0})$ arising form cases (b) and (c) as pairs of {\it Type A} and  {\it Type B} respectively. We will use the same name for the corresponding painting Dynkin diagrams and for the flag manifolds determined by them.  
According to  \cite[Propositions 5 and 6]{Chry2} if $M=G/K$ is of Type A or B  then the corresponding isotropy decomposition is given as follows:

\smallskip 
 {\bf Type A} $\Rightarrow$   
$\fr{m}=\fr{m}_{1}\oplus\fr{m}_{2}\oplus\fr{m}_{3}\oplus\fr{m}_{4}$ \  or \   
$\fr{m}=\fr{m}_{1}\oplus\fr{m}_{2}\oplus\fr{m}_{3}\oplus\fr{m}_{4}\oplus\fr{m}_{5}$.
 
\smallskip 
{\bf Type B} $\Rightarrow$ 
$\fr{m}=\fr{m}_{1}\oplus\fr{m}_{2}\oplus\fr{m}_{3}\oplus\fr{m}_{4}\oplus\fr{m}_{5}$  \ or  \ 
$\fr{m}=\fr{m}_{1}\oplus\fr{m}_{2}\oplus\fr{m}_{3}\oplus\fr{m}_{4}\oplus\fr{m}_{5}\oplus\fr{m}_{6}$.

\medskip

 In Table 1  we give the pairs $(\Pi, \Pi_{0})$  of Type  A and B,  which determine flag manifolds $G/K$ 
 with $\fr{m}= \fr{m}_{1}\oplus\cdots\oplus\fr{m}_{5}$.    The explicit form of these flag manifolds is given in Table 2.
 
\medskip
\smallskip
{\small{\begin{center}
{\bf Table 1.} \ {\small Pairs $(\Pi, \Pi_{0})$ of Type  A and B which determine flag manifolds with five isotropy summnads}  
\end{center}
\begin{center}
 \begin{tabular}{|r|l|l|}
 \hline
      Classical Lie group $G$    & $B_{\ell}=\SO(2\ell+1)$  & $D_{\ell}=\SO(2\ell)$    \\
                \thickline
    Type A   &  $\Pi\backslash\Pi_{0}=\{\al_1, \al_{p+1}  :  2\leq p\leq \ell-1\}$ & $\Pi\backslash\Pi_{0}=\{\al_1, \al_{p+1} : 2\leq p\leq \ell-3\}$  \\
     Type B & $\Pi\backslash\Pi_{0}=\{\al_p, \al_{p+1} : 2\leq p\leq \ell-1\}$ & $\Pi\backslash\Pi_{0}=\{\al_p, \al_{p+1} :   2\leq p\leq \ell- 3\}$ \\
       \hline
    Exceptional Lie group $G$   & $\E_6$ & $\E_7$ \\ 
       \thickline
     Type A   &  $\Pi\backslash\Pi_{0}=\{\al_1, \al_4\}$ & $\Pi\backslash\Pi_{0}=\{\al_1, \al_7\}$ \\
     Type A   &  $\Pi\backslash\Pi_{0}=\{\al_2, \al_5\}$ & \\
     Type B   &  $\Pi\backslash\Pi_{0}=\{\al_4, \al_{6}\}$  & $\Pi\backslash\Pi_{0}=\{\al_6, \al_7\}$ \\
      Type B  &  $\Pi\backslash\Pi_{0}=\{\al_2, \al_{6}\}$  & \\
  \hline
 \end{tabular}
 \end{center}}}
\medskip
\smallskip

 For any Lie group $G$ which appears in Table 1, one can see that the flag manifolds $G/K$ of Type A and B  are equivalent to each other (since the isotropy subgroups are conjugate).\footnote{Two flag manifolds $G/K$ and $G/K'$ are called equivalent if there exists an automorphism $\phi\in\Aut(G)$ such that $\phi(K)=K'$.  Such an automorphism defines a diffeomorphism $\tilde{\phi}: G/K\to G/K'$ given by $\tilde{\phi}(gK)=\phi(g)K'$, which satisfies $\tilde{\phi}(gx)=\phi(g)\tilde{\phi}(x)$ for all $g\in G, x\in G/K$.}  In fact, in Section 4 we will prove   that there is an isometry   arising from the action of the Weyl group   of $G$ and makes the corresponding  flag manifolds $G/K$ of Type A and B isometric to each other, as real manifolds.  
 For this reason there are only four non isometric flag manifolds (as real manifolds) with $b_2(M)=2$ with five isotropy summands as shown in Table 2.

\medskip
\smallskip  
{\small{\begin{center}
{\bf Table 2.} \ {\small Generalized flag manifolds with five isotropy summands and $b_2(M)=2$}  
\end{center}
\begin{center}
\begin{tabular}{ll}
$M=G/K$ classical &  $M=G/K$ exceptional \\
\thickline
$\SO(2\ell+1)/\U(1)\times \U(p)\times \SO(2(\ell-p-1)+1)$ & $\E_6/\SU(4)\times \SU(2)\times \U(1)^{2}$ \\
 $\SO(2\ell)/\U(1)\times \U(p)\times \SO(2(\ell-p-1))$  &  $\E_7/\SU(6)\times \U(1)^{2}$
 \end{tabular}
 \end{center}}}

   \smallskip

\noindent The classification of flag manifolds with five isotropy summands is given in Section 4.

The main difficulty in constructing the  Einstein equation for a $G$-invariant metric on a flag manifold in Table 2 is  the calculation of the non zero structure constants $\displaystyle{k \brack {ij}}$ of $G/K$ with respect to the decomposition (\ref{isodec}) (see Section 2).   
A first step towards this procedure is to use the known K\"ahler-Einstein metric on  any flag manifold.  This metric can be computed   
by using the Koszul formula (see Section 5).
Secondly, and this is the main contribution of the present paper, we  take advantage of a fibration of a flag manifold  over another  flag manifold  and  use methods of   Riemannian submersions to compare Ricci tensors of total space and base space.
In this way we are able to calculate  $\displaystyle{k \brack {ij}}$ in terms of the dimension of the submodules $\fr{m}_i$ in the
decomposition (\ref{isodec}).
We point out that this new technique can be useful for the study of homogeneous Einstein metrics for more general homogeneous spaces, whose isotropy representation satisfies certain conditions.
 In this way the Einstein equation reduces to a  polynomial system of four equations in four unknowns.
  For the exceptional flag manifolds we classify all homogeneous Einstein metrics.    
 For the classical flag manifolds a complete classification of homogeneous Einstein metrics in the general case  is a difficult task, because the corresponding systems of equations depend on  four positive parameters (which define the invariant Riemannian metric), the Einstein constant $\lambda>0$ and the positive integers $\ell$ and $p$.    
 However, by using Gr\"obner bases we can show that the equations are reduced to a polynomial equation of   one variable and then prove the existence of non K\"aler Einstein metrics.  
 In fact, this is another contribution of the present paper, because we prove existence of real solutions
 for polynomial equations whose coefficients depend on parameters ($\ell$ and $p$).

 The paper is organized as follows: In Section  2 we discuss $G$-invariant metrics on homogeneous spaces $G/K$ and compare them with 
 Riemannian submersion metrics  for a fibration $L/K\to G/K \to G/L$.  
 Then we give expression for the Ricci tensor for a submersion metric.
 In Section 3 we recall various facts about generalized flag manifolds which will be used in Section 4 for the classification of such spaces with
 five isotropy summands.  Combined with the work \cite{CS} these spaces are 
 $\E_8/ (\U(1)\times \SU(4)\times \SU(5))$ with second Betti number 1 and the spaces in Table 2 with second Betti number 2.
 In Section 5 we give the K\"ahler-Einstein metrics for flag manifolds with five isotropy summands and in Section 6 we compute the
 Ricci tensor for these spaces by using our method of Riemannian submersions and the known K\"ahler-Einstein metrics.
 Section 7 is devoted to the study of the algebraic systems of equations by using Gr\"obner bases techniques.

\medskip
 \noindent
 {\sc Theorem A.}  {\it Let $M=G/K$ be one of the flag manifolds $\E_6/(\SU(4)\times \SU(2)\times\U (1)\times\U (1))$ or
 $\E_7/(\U(1)\times\U(6))$. Then $M$ admits exactly seven $G$-invariant Einstein metrics up to isometry.
 There are two K\"ahler-Einstein metrics and five non-K\"ahler Einstein metrics (up to scalar).  These metrics are given in Theorems  
 7.1 and 7.2}.

  \noindent
 {\sc Theorem B.} {\it Let $M=G/K$ be one of the flag manifolds $\SO(2\ell +1)/(\U(1)\times\U (p)\times\SO(2(\ell -p-1)+1))$  $(\ell \ge 3, \ 3\le p\le\ell -1)$ or    
 $\SO(2\ell)/(\U(1)\times\U (p)\times\SO(2(\ell -p-1)))$ $(\ell\ge 5, \ 3\le p\le\ell -3)$.
 Then $M$ admits at least two $G$-invariant non-K\"ahler Einstein metrics
 (cf. Theorem 7.3).  
 }
 
 \noindent
 {\sc Theorem C.} {\it Let $M=G/K$ be one of the flag manifolds 
 $\SO(2\ell +1)/(\U(1)\times\U (2)\times\SO(2\ell -5))$  $(\ell \ge 6)$ or    
 $\SO(2\ell)/(\U(1)\times\U (2)\times\SO(2(\ell -3)))$ $(\ell\ge 7)$.
 Then $M$ admits at least four $G$-invariant non-K\"ahler Einstein metrics
 (cf. Theorem 7.4).  
 }

 \noindent
 
 Note that the special case $\ell=3, p=2$ (the space $\SO(7)/\U(1)\times \U(2)$)  was studied among other results in \cite{Chry}.
 This flag manifold admits (up to isometry) precisely three non-K\"ahler Einstein metrics and precisely 
 two K\"ahler-Einstein metrics.
 For small values of $\ell$ and $p$ it is possible to obtain the precise number of all non isometric
 invariant Einstein metrics.  We discuss this at the end of the paper (cf. Table 4).

\section{Reductive homogeneous spaces  and  Riemannian submersions}
In this section we describe the Einstein equation for any $G$-invariant metric on a compact reductive homogeneous manifold $G/K$, and  
give expression of the Ricci tensor of a submersion metric associated to a certain fibration $G/K\to G/L$.
This expression will be used in Chapter 6 to calculate $\displaystyle{k \brack {ij}}$ for flag manifolds with five isotropy summands.

\subsection{The Ricci tensor for a reductive homogeneous spaces}
Let $G$ be a compact semisimple Lie group, $K$ a connected closed subgroup of $G$  and  
let  $\frak g$ and $\fr{k}$  be  the corresponding Lie algebras. 
The Killing form of $\frak g$ is negative definite, so we can define an $\mbox{Ad}(G)$-invariant inner product $B$ on 
  $\frak g$ given by 
$B  = $  $-$ Killing form of $\fr{g}$. 
Let $\frak g$ = $\frak k \oplus
\frak m$ be a reductive decomposition of $\frak g$ with respect to $B$ so that $\left[\,\frak k,\, \frak m\,\right] \subset \frak m$ and
$\frak m\cong T_o(G/K)$.
 We assume that $ {\frak m} $ admits a decomposition into mutually non equivalent irreducible $\mbox{Ad}(K)$-modules as follows: \ 
\begin{equation}\label{iso}
{\frak m} = {\frak m}_1 \oplus \cdots \oplus {\frak m}_q.
\end{equation} 
Then any $G$-invariant metric on $G/K$ can be expressed as  
\begin{eqnarray}
 \langle  \,\,\, , \,\,\, \rangle  =  
x_1   B|_{\mbox{\footnotesize$ \frak m$}_1} + \cdots + 
 x_q   B|_{\mbox{\footnotesize$ \frak m$}_q},  \label{eq2}
\end{eqnarray}
for positive real numbers $(x_1, \cdots, x_q)\in\bb{R}^{q}_{+}$.  Note that  $G$-invariant symmetric covariant 2-tensors on $G/K$ are 
of the same form as the Riemannian metrics (although they  are not necessarilly  positive definite).  
 In particular, the Ricci tensor $r$ of a $G$-invariant Riemannian metric on $G/K$ is of the same form as (\ref{eq2}), that is 
 \[
 r=y_1 B|_{\mbox{\footnotesize$ \frak m$}_1}  + \cdots + y_{q} B|_{\mbox{\footnotesize$ \frak m$}_q} ,
 \]
 for some real numbers $y_1, \ldots, y_q$.

Let $\lbrace e_{\alpha} \rbrace$ be a $B$-orthonormal basis 
adapted to the decomposition of $\frak m$,    i.e. 
$e_{\alpha} \in {\frak m}_i$ for some $i$, and
$\alpha < \beta$ if $i<j$. 
We put ${A^\gamma_{\alpha
\beta}}=B \left(\left[e_{\alpha},e_{\beta}\right],e_{\gamma}\right)$ so that
$\left[e_{\alpha},e_{\beta}\right]
= \displaystyle{\sum_{\gamma}
A^\gamma_{\alpha \beta} e_{\gamma}}$ and set 
$\displaystyle{k \brack {ij}}=\sum (A^\gamma_{\alpha \beta})^2$, where the sum is
taken over all indices $\alpha, \beta, \gamma$ with $e_\alpha \in
{\frak m}_i,\ e_\beta \in {\frak m}_j,\ e_\gamma \in {\frak m}_k$ (cf. \cite{Wa}).  
Then the positive numbers $\displaystyle{k \brack {ij}}$ are independent of the 
$B$-orthonormal bases chosen for ${\frak m}_i, {\frak m}_j, {\frak m}_k$,
and 
$\displaystyle{k \brack {ij}}\ =\ \displaystyle{k \brack {ji}}\ =\ \displaystyle{j \brack {ki}}.  
 \label{eq3}
$

Let $ d_k= \dim{\frak m}_{k}$. Then we have the following:

\begin{lemma}\label{ric2}\textnormal{(\cite{PS})}
The components ${ r}_{1}, \dots, {r}_{q}$ 
of the Ricci tensor ${r}$ of the metric $ \langle  \,\,\, , \,\,\, \rangle $ of the
form {\em (\ref{eq2})} on $G/K$ are given by 
\begin{equation}
{r}_k = \frac{1}{2x_k}+\frac{1}{4d_k}\sum_{j,i}
\frac{x_k}{x_j x_i} {k \brack {ji}}
-\frac{1}{2d_k}\sum_{j,i}\frac{x_j}{x_k x_i} {j \brack {ki}}
 \quad (k= 1,\ \dots, q),    \label{eq51}
\end{equation}
where the sum is taken over $i, j =1,\dots, q$.
\end{lemma}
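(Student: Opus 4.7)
The plan is to derive the formula by specializing a general expression for the Ricci tensor of a left-invariant metric on a reductive homogeneous space (see, e.g., Besse's book or the original Wang--Ziller derivation) to the diagonal metric \eqref{eq2}. Since the $\mathrm{Ad}(K)$-modules $\frak m_i$ are inequivalent, Schur's lemma guarantees that $\langle\,,\,\rangle$ is diagonal in the decomposition \eqref{iso}, which is the key structural simplification that will allow the sums to collapse into the coefficients $\bigl[{k\atop ij}\bigr]$.

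First I would set up the computation by fixing a $B$-orthonormal basis $\{e_\alpha\}$ adapted to \eqref{iso} and defining $f_\alpha := e_\alpha/\sqrt{x_i}$ for $e_\alpha \in \frak m_i$, so that $\{f_\alpha\}$ is orthonormal for $\langle\,,\,\rangle$. In this basis the general Ricci formula reads (at the origin, for $X\in\frak m_k$)
\[
r(X,X) = -\tfrac{1}{2}\sum_{\alpha}\bigl|[X,f_\alpha]_{\frak m}\bigr|^2_{\langle,\rangle} \; -\; \tfrac{1}{2} B(X,X) \;+\; \tfrac{1}{4}\sum_{\alpha,\beta}\bigl\langle [f_\alpha,f_\beta]_{\frak m},X\bigr\rangle^2,
\]
where the second term comes from $-\tfrac12 B|_{\frak m}$ (recall $B=-\mathrm{Killing}$) and all mean-curvature corrections vanish because $K$ is unimodular in a compact semisimple $G$. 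Plugging in $X=e_\gamma \in \frak m_k$ with $|X|^2_{\langle,\rangle}=x_k$ and rewriting each bracket with the structure constants $A^\delta_{\alpha\beta}=B([e_\alpha,e_\beta],e_\delta)$ converts the inner products $\langle\cdot,\cdot\rangle$ into $B(\cdot,\cdot)$ at the cost of explicit factors of $x_i,x_j,x_k$ in the denominators.

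Next I would carry out the bookkeeping: in the first sum the pair $(\alpha,\beta)$ with $e_\alpha\in\frak m_i,\, e_\beta\in \frak m_j$ contributes a term proportional to $(A^\beta_{\gamma\alpha})^2\cdot x_j/x_i$, and averaging $r(X,X)$ over a $B$-orthonormal basis of $\frak m_k$ produces a factor $1/d_k$ along with the summed structure constant $\bigl[{j\atop ki}\bigr]$. The curvature term likewise produces $\bigl[{k\atop ij}\bigr]/(x_ix_j)$ after averaging, and the $-\tfrac12 B(X,X)$ term becomes the isotropic $+\tfrac{1}{2x_k}$ piece once divided by $x_k$ to extract the component $r_k$ defined by $r=\sum_k r_k\,x_k\,B|_{\frak m_k}/x_k$. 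Collecting terms yields \eqref{eq51}.

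The only real obstacle is the symmetry-bookkeeping: the coefficient in front of the third sum in the general Wang--Ziller formula is $\tfrac{1}{4}$, whereas \eqref{eq51} carries $\tfrac{1}{2d_k}$ in front of the last sum. The factor of $2$ is recovered by using the identity $\bigl[{k\atop ij}\bigr]=\bigl[{j\atop ki}\bigr]=\bigl[{i\atop jk}\bigr]$, which follows from the total skew-symmetry of $A^\gamma_{\alpha\beta}$ under $B$ (since $B$ is $\mathrm{Ad}$-invariant). Once this identity is used to symmetrize the first sum and to identify the indices playing the role of $k,j,i$ in the two bracketed quantities, the coefficients line up exactly and \eqref{eq51} follows. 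The remainder is routine, relying essentially on Schur's lemma to ensure off-diagonal components of $r$ vanish and on the $B$-invariance of $A^\gamma_{\alpha\beta}$.
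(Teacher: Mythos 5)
Your strategy is the standard one --- and since the paper does not reprove this lemma but simply cites \cite{PS}, it is essentially the argument behind the citation: specialize the general Ricci formula for a $G$-invariant metric with $G$ compact (unimodular) to the diagonal metric (\ref{eq2}), pass to the rescaled basis $f_\alpha=e_\alpha/\sqrt{x_i}$, and use Schur's lemma (pairwise inequivalent irreducible summands) to diagonalize $r$ and reduce everything to the quantities $\genfrac{[}{]}{0pt}{}{k}{ij}$. The skeleton is fine, and in fact your middle paragraph already records the correct correspondence of terms. The problem is your final paragraph, where the coefficient bookkeeping --- which is the entire content of the lemma --- goes wrong.

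There is no factor of $2$ to recover. For $X=f_\gamma$ with $e_\gamma\in\mathfrak{m}_k$ one has $\langle[f_\alpha,f_\beta]_{\mathfrak m},f_\gamma\rangle^2=\tfrac{x_k}{x_ix_j}(A^\gamma_{\alpha\beta})^2$ and $|[f_\gamma,f_\alpha]_{\mathfrak m}|^2_{\langle,\rangle}=\sum_j\tfrac{x_j}{x_kx_i}\sum_{\delta}(A^\delta_{\gamma\alpha})^2$, so after averaging over a $B$-orthonormal basis of $\mathfrak m_k$ the $\tfrac14$-term of the general formula gives exactly $\tfrac{1}{4d_k}\sum_{i,j}\tfrac{x_k}{x_ix_j}\genfrac{[}{]}{0pt}{}{k}{ij}$, and the $-\tfrac12$-term gives exactly $-\tfrac{1}{2d_k}\sum_{i,j}\tfrac{x_j}{x_kx_i}\genfrac{[}{]}{0pt}{}{j}{ki}$; you have matched the $\tfrac14$-term against the wrong sum of \eqref{eq51}. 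Invoking $\genfrac{[}{]}{0pt}{}{k}{ij}=\genfrac{[}{]}{0pt}{}{j}{ki}$ to manufacture the ``missing'' $2$ would actually spoil the result, because the two sums carry different weights, $x_k/(x_ix_j)$ versus $x_j/(x_kx_i)$, and cannot be interchanged term by term. Two further slips to repair: with the paper's convention $B=-\mathrm{Killing}$, the middle term of the general formula is $+\tfrac12 B(X,X)$ (your $-\tfrac12B(X,X)$ would produce $-\tfrac{1}{2x_k}$); and the normalization of the components must be $r=\sum_k r_k\,x_k\,B|_{\mathfrak m_k}$, i.e.\ $r_k=r(f_\gamma,f_\gamma)$ for a $\langle\,,\,\rangle$-unit vector $f_\gamma\in\mathfrak m_k$ --- your displayed definition literally reads $r=\sum_k r_kB|_{\mathfrak m_k}$, which is off by the factor $x_k$ and would make the Einstein system differ from $r_1=\dots=r_q=\lambda$.
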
 
Since by assumption the submodules $\fr{m}_{i}, \fr{m}_{j}$ in the decomposition (\ref{iso}) are matually non equivalent for any $i\neq j$, it will be $r(\fr{m}_{i}, \fr{m}_{j})=0$ whenever $i\neq j$.  Thus by Lemma \ref{ric2} it follows that    $G$-invariant Einstein metrics on $M=G/K$ are exactly the positive real solutions $g=(x_1, \ldots, x_q)\in\bb{R}^{q}_{+}$  of the  polynomial system $\{r_1=\lambda, \ r_2=\lambda, \ \ldots, \ r_{q}=\lambda\}$, where $\lambda\in \bb{R}_{+}$ is the Einstein constant.

\subsection{Riemannian submersions}
 Let $G$ be a compact semisimple Lie group and $K$, $L$ two closed subgroups of $G$ with $K \subset L$. 
 Then there is a natural fibration $\pi  :  G/K  \to G/L $ with fiber $L/K$. 
    
     Let $\frak p$ be the orthogonal complement of $\frak l$ in
$\frak g$ with respect to $B $,  and  $\frak q$ be the orthogonal complement of $\frak k$ in $\frak l$. Then we have $\frak g$ = $\frak l \oplus \frak p = \frak k  \oplus \frak q  \oplus \frak p$. An $\mbox{Ad}_G (L)$-invariant scalar product on $\frak p$ defines a $G$-invariant metric $\check{g}$ on $G/L$, and an $\mbox{Ad}_L (K)$-invariant scalar product on $\frak q$ defines an $L$-invariant metric $\hat{g}$ on $L/K$. The orthogonal direct sum for these scalar products on $\frak q  \oplus \frak p$ defines a $G$-invariant metric $ g$ on $G/K$, called {\it submersion metric}. 

\begin{theorem}\textnormal{\cite[p. 257]{Be}}
The map $\pi$ is a Riemannian submersion from $( G/K, \,  g )$ to $( G/L, \, \check{g} )$ with totally geodesic fibers isometric to $( L/K, \, \hat{g} )$. 
\end{theorem}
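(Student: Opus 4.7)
The plan is to verify three things in sequence — that $\pi$ is a Riemannian submersion, that each fiber is isometric to $(L/K,\hat{g})$, and that every fiber is totally geodesic — working at the origin $eK$ and then propagating by $G$-equivariance. Throughout I will use the reductive splittings $\fr{g}=\fr{l}\oplus\fr{p}=\fr{k}\oplus\fr{q}\oplus\fr{p}$ supplied by $B$, which identify $T_{eK}(G/K)\cong\fr{q}\oplus\fr{p}$ and $T_{eL}(G/L)\cong\fr{p}$, and the $G$-equivariance of the natural projection $\pi(gK)=gL$.

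For the submersion property, the vertical distribution at $eK$ is the tangent space to the fiber $\pi^{-1}(eL)=L/K$, namely $\fr{q}$, so the horizontal distribution is $\fr{p}$. By the very definition of the submersion metric, the restriction of $g$ to $\fr{p}$ equals the $\Ad_{G}(L)$-invariant scalar product on $\fr{p}$ that defines $\check{g}$, and $d\pi_{eK}|_{\fr{p}}$ is the identity on $\fr{p}$. Hence $d\pi_{eK}$ is an isometry on horizontals, and $G$-equivariance of $\pi$ together with $G$-invariance of $g$ and $\check{g}$ extends this to every point. For the fiber geometry, the fiber over $eL$ is the orbit $L\cdot eK=L/K$; the induced metric at $eK$ is the restriction of $g$ to $\fr{q}$, which by construction coincides with $\hat{g}$. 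Every other fiber is a $G$-translate of this one, hence isometric to $(L/K,\hat{g})$.

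The totally-geodesic step is the actual content. At $eK$, for vertical $X,Y\in\fr{q}$ and horizontal $Z\in\fr{p}$, the Koszul formula for the Levi-Civita connection of a reductive invariant metric reads
\[
2\,g(\nabla_{X} Y,Z)=g([X,Y]_{\fr{m}},Z)-g([Z,X]_{\fr{m}},Y)-g([Z,Y]_{\fr{m}},X),
\]
where $\fr{m}=\fr{q}\oplus\fr{p}$. I need two bracket relations: first, $[\fr{q},\fr{q}]\subset\fr{l}$, which is immediate from $\fr{q}\subset\fr{l}$ and $\fr{l}$ being a subalgebra; second, $[\fr{l},\fr{p}]\subset\fr{p}$, which follows from $\Ad(G)$-invariance of $B$ together with $\fr{p}=\fr{l}^{\perp}$, since for $U\in\fr{l}$, $V\in\fr{p}$, $W\in\fr{l}$ one has $B([U,V],W)=-B(V,[U,W])=0$. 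The first inclusion gives $[X,Y]_{\fr{m}}\in\fr{q}$, hence $B$-orthogonal to $Z\in\fr{p}$; the second gives $[Z,X],[Z,Y]\in\fr{p}$, hence $B$-orthogonal to $Y,X\in\fr{q}$. All three terms of the Koszul identity vanish, so the second fundamental form of the fiber vanishes at $eK$. Since $L$ acts on $G/K$ by isometries of $g$ that preserve the fiber setwise and transitively, this vanishing extends to all of $L/K$, and $G$-equivariance extends it to every fiber.

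The main obstacle is the totally-geodesic assertion, and within it the bracket inclusion $[\fr{l},\fr{p}]\subset\fr{p}$ — this is the algebraic shadow of the fact that the horizontal distribution is $L$-invariant, and without it the cross terms in the Koszul formula would not drop. Once this inclusion is secured, the identity collapses term by term and the other two parts of the theorem are essentially bookkeeping with the construction of the submersion metric.
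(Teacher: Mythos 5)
Your proof is correct; the paper never proves this statement — it is quoted directly from Besse \cite[p.~257]{Be} — and your argument is essentially the standard one found there: identify the vertical and horizontal spaces at the origin with $\fr{q}$ and $\fr{p}$, check that the metric identifications give the submersion and fiber-isometry claims, and kill the second fundamental form of the fiber using $[\fr{q},\fr{q}]\subset\fr{l}$ and $[\fr{l},\fr{p}]\subset\fr{p}$, then propagate by $G$- and $L$-equivariance. Two cosmetic caveats: the precise signs and ordering of the bracket terms in the invariant-metric Koszul formula depend on the convention identifying $\fr{m}=\fr{q}\oplus\fr{p}$ with Killing fields, and the orthogonality you invoke in those terms should be with respect to $g$ (which holds because the submersion metric is by construction the orthogonal direct sum on $\fr{q}\oplus\fr{p}$, not merely because of $B$-orthogonality) — neither affects the conclusion, since each term pairs an element of $\fr{q}$ against an element of $\fr{p}$ and therefore vanishes under any of the standard conventions.
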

Note that $\frak q$ is the vertical subspace of the submersion and $\frak p$ is the horizontal subspace. 
   
   For a Riemannian submersion, O'Neill \cite{O'Neill} has introduced two tensors $ A$ and $T$. 
   Since in our case the fibers are totally geodesic it is  $T =0$. We also have that 
   $$A_X Y = \frac{1}{2} [X, \ Y ]_{\frak q} \quad \mbox{  for }  \ X, Y \in {\frak p}.  $$ 

  Let $\{X_i\}$ be an orthonormal basis of $\frak p$ and  $\{U_j\}$  be an orthonormal basis of $\frak q$. For $X, Y \in  \frak p$ we put $\displaystyle  g(A_X, \ A_Y) =  \sum_{i} g( A_X X_i, A_Y X_i ). $ 
  Then we have  that
   \begin{eqnarray}\label{submeq1}
    g(A_X, \ A_Y) =  \frac{1}{4}\sum_{i} \hat{g}( [X, \ X_i ]_{\frak q}, \  [ Y, \ X_i ]_{\frak q}).
   \end{eqnarray}

 Let $r$, $\check{r}$ be the Ricci tensors of the metrics $g$, $\check{g}$ respectively. Then we have
 (\cite[p. 244]{Be}) 
   \begin{eqnarray} \label{submeq2}
r(X, Y) = \check{r}(X, Y) - 2 g(A_X, A_Y) \quad \mbox{  for }  \ X, Y \in {\frak p}. 
     \end{eqnarray} 
     We remark that there is a corresponding expression $r (U,V)$ for vertical vectors, but it does not contribute
   additional information in our approach.                
   
Let  $${\frak p} = {\frak p}_1 \oplus \cdots \oplus {\frak p}_{\ell}, \quad {\frak q} = {\frak q}_1 \oplus \cdots \oplus {\frak q}_{s}$$ be a decomposition of $ {\frak p} $ into irreducible $\mbox{Ad}(L)$-modules and   a decomposition of $ {\frak q} $ into irreducible $\mbox{Ad}(K)$-modules respectively,  
and assume that  the $\mbox{Ad}(L)$-modules ${\frak p}_j$ $( j = 1, \cdots, \ell )$  are   mutually non equivalent.  
 Note that each irreducible component $ {\frak p}_{j}$ as $\mbox{Ad}(L)$-module can be decomposed into irreducible $\mbox{Ad}(K)$-modules.  
  To compute  the values $\displaystyle {k \brack {ij}}$ for $G/K$, we use information from the Riemannian submersion
     $\pi : ( G/K, \,  g )  \to  ( G/L, \, \check{g} )$ with totally geodesic fibers isometric to $( L/K, \, \hat{g} )$. 
 We consider a $G$-invariant metric on $G/K$ defined by a  Riemannian submersion  $\pi : ( G/K, \,  g )  \to  ( G/L, \, \check{g} )$ given by 
\begin{eqnarray}
g  =  
y_1   B|_{\mbox{\footnotesize$ \frak p$}_1} + \cdots + 
 y_{\ell}   B|_{\mbox{\footnotesize$ \frak p$}_{\ell}} + z_1 B|_{\mbox{\footnotesize$ \frak q$}_1} + \cdots + 
 z_{s}   B|_{\mbox{\footnotesize$ \frak q$}_{s}} \label{eq4}
 \end{eqnarray}
for positive real numbers $ y_1, \cdots, y_{\ell}, z_1, \cdots, z_s$. 
   
     Then we decompose each irreducible component ${\frak p}_{j}$  into irreducible $\mbox{Ad}(K)$-modules
 $$ {\frak p}_{j} = {\frak m}_{j, 1} \oplus \cdots \oplus {\frak m}_{j,  \, k_j}, $$  where the 
 $\mbox{Ad}(K)$-modules ${\frak m}_{j,  t} $ $( j= 1, \cdots, \ell,\  t = 1, \cdots, k_j )$  are  mutually non equivalent and are chosen
 to be  (up to reordering) submodules from the decomposition (2). 
Then the submersion metric (\ref{eq4}) can be written as 
\begin{eqnarray}
g  =  y_1 \sum_{t = 1}^{k_1}  B|_{\mbox{\footnotesize$ \frak m$}_{1, t}} + \cdots + 
 y_{\ell}   \sum_{t = 1}^{k_{\ell}} B|_{\mbox{\footnotesize$ \frak m$}_{\ell, t}} + z_1 B|_{\mbox{\footnotesize$ \frak q$}_1} + \cdots +  z_{s}   B|_{\mbox{\footnotesize$ \frak q$}_{s}} \label{metric_submersion}
 \end{eqnarray}
 and this is a special case of the $G$-invariant metric (\ref{eq2}). 
 
 
 \begin{lemma} \label{submersion_ricci} 
 Let $ d_{j, t} = \dim {\frak m}_{j, t}$. 
The components $ r_{(j, \, t)}$  $( j= 1, \cdots, \ell,\  t = 1, \cdots, k_j )$
of the Ricci tensor ${r}$ for the metric {\em (\ref{metric_submersion})} on $G/K$ are given by 
\begin{equation}
 r_{(j, \, t)} = \check{r}_{j} -  \frac{1}{2 d_{j,\,  t} }\sum_{i=1}^s \sum_{j',\,  t'} \frac{z_i}{y_{j} y_{j'}} {i \brack {(j, t) \  (j', t')}},  
\end{equation} 
where $\check{r}_{j}$ are the components of Ricci tensor $\check{r}$ for the metric $\check{g}$ on $G/L$. 

\end{lemma}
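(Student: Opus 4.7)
The plan is to combine the O'Neill-type identity (7) with the bracket formula (6) for the $A$-tensor, and then to read off the scalar coefficient $r_{(j,t)}$ on each irreducible submodule $\mathfrak{m}_{j,t}$ by Schur's lemma. Since the $\Ad(K)$-modules $\mathfrak{m}_{j,t}$ are pairwise inequivalent and irreducible, the $G$-invariant symmetric tensor $r$ must restrict to $r_{(j,t)}B$ on $\mathfrak{m}_{j,t}$; likewise $\check r$ restricts to $\check r_jB$ on $\mathfrak{p}_j$, and hence on $\mathfrak{m}_{j,t}\subset\mathfrak{p}_j$ as well. Evaluating (7) on $X\in\mathfrak{m}_{j,t}$ and averaging over a $B$-orthonormal basis $\{e_\alpha\}_{\alpha=1}^{d_{j,t}}$ of $\mathfrak{m}_{j,t}$ gives
\[
r_{(j,t)} \;=\; \check r_j \;-\;\frac{2}{d_{j,t}}\sum_{\alpha=1}^{d_{j,t}} g(A_{e_\alpha},A_{e_\alpha}),
\]
so the task reduces to expressing the averaged squared $A$-norm in terms of the structure constants of $G/K$.

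For the right-hand side I would apply (6) after preparing the correct bases. I would fix a $g$-orthonormal basis of the horizontal space $\mathfrak{p}$ by taking a $B$-orthonormal basis $\{e_\beta^{(j',t')}\}$ of each submodule $\mathfrak{m}_{j',t'}\subset\mathfrak{p}_{j'}$ and rescaling by $1/\sqrt{y_{j'}}$, together with a $B$-orthonormal basis $\{U_\gamma^{(i)}\}$ of each $\mathfrak{q}_i$, so that $\hat g|_{\mathfrak{q}_i}=z_iB|_{\mathfrak{q}_i}$ is diagonal. Expanding the vertical component $[e_\alpha,e_\beta^{(j',t')}]_{\mathfrak{q}}$ in the $\{U_\gamma^{(i)}\}$-basis, each squared coefficient becomes $B([e_\alpha,e_\beta^{(j',t')}],U_\gamma^{(i)})^{2}$, while the rescaling factors $1/\sqrt{y_{j'}}$ and $\sqrt{z_i}$ produce exactly the metric ratios in $y$'s and $z$'s that appear in the statement.

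Finally, I would recognize the resulting triple sum $\sum_{\alpha,\beta,\gamma} B([e_\alpha,e_\beta^{(j',t')}],U_\gamma^{(i)})^{2}$ as precisely the structure constant $\displaystyle{i\brack{(j,t)\,(j',t')}}$ of $G/K$ defined in Section~2.1, since all bases involved are $B$-orthonormal and adapted to the full $\Ad(K)$-decomposition of $\mathfrak{m}=T_oM$. Substituting and re-ordering the sums over $i,j',t'$ then yields the stated identity. The conceptual steps (Schur's lemma plus the Riemannian-submersion identity) are short; the real obstacle is the careful bookkeeping of the $\sqrt{y_{j'}}$ and $\sqrt{z_i}$ rescaling factors when translating between the $B$-orthonormal bases in which the structure constants $\displaystyle{\cdot\brack{\cdot\,\cdot}}$ are defined and the $g$- and $\hat g$-orthonormal bases required by formula (6) and by $\hat g|_{\mathfrak{q}_i}=z_i B|_{\mathfrak{q}_i}$.
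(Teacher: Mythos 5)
Your plan is essentially the paper's own proof of Lemma \ref{submersion_ricci}: rescale a $B$-orthonormal adapted basis to a $g$-orthonormal one, apply the Riemannian submersion identities (\ref{submeq1}) and (\ref{submeq2}), sum over a basis of $\mathfrak{m}_{j,t}$, and recognize the triple sum of squared bracket coefficients as the structure constant ${i \brack {(j,t)\ (j',t')}}$.

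One normalization point needs fixing in your write-up. The components $r_{(j,t)}$ and $\check{r}_j$ in the statement are those of Lemma \ref{ric2}, i.e.\ values of the Ricci tensors on $g$- (resp.\ $\check{g}$-) unit vectors, so the averaging in your first display must be over the $g$-orthonormal vectors $e_\alpha/\sqrt{y_j}$ (as the paper does), not over the $B$-orthonormal $e_\alpha$. As written, your display is the identity for the coefficients of $r$ and $\check{r}$ taken relative to $B$, and carrying it through with your rescaled horizontal basis produces the ratio $\frac{z_i}{y_{j'}}$ rather than $\frac{z_i}{y_j y_{j'}}$; the missing factor $\frac{1}{y_j}$ enters either by averaging over the rescaled basis of $\mathfrak{m}_{j,t}$ or by dividing by $y_j$ at the end when converting back to the components used in the statement. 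With that bookkeeping corrected, your argument is complete and identical in substance to the paper's.
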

\begin{proof}
 Let $\lbrace e_{\alpha}^{(j, t)},  e_{\beta}^{(i)} \rbrace$ be a  $B$-orthonormal basis 
adapted to the decomposition of $ \displaystyle {\frak p} \oplus{ \frak q} = \sum_{j}\sum_{t = 1}^{k_j} \frak m_{j, t}\oplus \sum_{i}  {\frak q}_i  $   (with $ e_{\alpha}^{(j, t)} \in  \frak m_{j, t}$ and $e_{\beta}^{(i)} \in {\frak q}_i$). 
 Put $\displaystyle {X_{\alpha}^{(j, t)} = \frac{1}{\sqrt{y_j}} e_{\alpha}^{(j, t)} }$ and $\displaystyle X_{\beta}^{(i)} = \frac{1}{\sqrt{z_i} } e_{\beta}^{(i)}$. Then $\left\{ X_{\alpha}^{(j, t)},  X_{\beta}^{(i)} \right\}$ is an orthonormal basis of $ \displaystyle {\frak p} \oplus{ \frak q}$ for the metric $g$. 
 Then, by using equations (\ref{submeq1}) and (\ref{submeq2}), we obtain that
\begin{equation*} 
\sum_{\gamma=1}^{ d_{j, t}}  r(X_{\gamma}^{(j, t)}, X_{\gamma}^{(j, t)} ) = \sum_{\gamma=1}^{ d_{j, t}}  \check{r}(X_{\gamma}^{(j, t)}, X_{\gamma}^{(j, t)} ) - \frac{1}{2}\sum_{i} \sum_{j', t'} \frac{z_i}{y_{j} y_{j'}} {i \brack {(j, t) \  (j', t')}}.
\end{equation*} 
Noting that $\left\{ X_{\gamma}^{(j, t)} \right\}_{\gamma = 1}^{d_{j, t}} $ is an orthonormal basis of ${ \frak m}_{j, t}$,  we obtain our claim. 
\end{proof}
Notice that when metric (\ref{eq4}) is viewed as a metric (\ref{eq2}) then the horizontal part of  $r_{(j, \, t)}$ equals to $\check{r}_{j}$ ($j=1,\dots ,\ell$), i.e. it is independent of $t$.

\section{Generalized flag manifolds}
We recall some facts about generalized flag manifolds, concerning painted Dynkin diagrams, isotropy representation and $\fr{t}$-roots.   For simplicity we   work with simple Lie algebras and groups (the results in the semisimple case are obtained by piecing together the simple factors). 

  \subsection{Description of flag manifolds in terms of painted Dynkin diagrams}
 Flag manifolds can be described  in terms of root systems as follows:   Let  $G$ be a compact connected simple Lie group with Lie algebra $\fr{g}$, and let $\fr{h}$ a maximal abelian subalgebra of $\fr{g}$.  We denote by $\fr{g}^{\bb{C}}$ and $\fr{h}^{\bb{C}}$ their complexifications and we assume that $\dim_{\bb{C}}\fr{h}^{\bb{C}}=l={\rm rank} G$. We identify an element of the root system $\Delta$ of
 ${\frak g }^{\mathbb C}_{}$ with respect to the Cartan subalgebra  
 ${\frak h}^{\mathbb C}_{}$ with an element of ${\frak h}_0 =  \sqrt{-1}\frak h$, by the
duality defined by the Killing form of ${\frak
g}^{\mathbb C}_{}$. This means that for any $\al\in\Delta$ we can define $H_{\al}\in\fr{h}_{0}$ by $\al(H)=B(H_{\al}, H)$ for any $H\in\fr{h}^{\bb{C}}$.   Consider the root space decomposition of  ${\frak g }^{\mathbb C}_{}$
relative to  ${\frak h}^{\mathbb C}_{}$, that is $ {\frak g }^{\mathbb C}_{}  =  {\frak h}^{\mathbb C}_{} \oplus
\sum^{}_{\alpha \in \Delta} {\frak g }^{\mathbb C}_{\alpha}$, and let  $\Pi$ = $\{\alpha^{}_1, \dots, \alpha^{}_l\}$
be a system of simple roots $\Delta$.  We denote by $\{\Lambda^{}_1, \dots,
\Lambda^{}_l\}$ the fundamental weights of ${\frak g }^{\mathbb C}_{}$ 
corresponding to $\Pi$, that is $\displaystyle\frac{2(\Lambda^{}_i, \alpha^{}_j)}{(\alpha^{}_j, \alpha^{}_j)} =
\delta^{}_{ij}$ for any $1 \le i, j \le l$.
  
Let $\Pi^{}_0$ be a subset of $\Pi$ and set $\Pi_{\fr{m}}=\Pi \backslash \Pi^{}_0$ =
$\{\alpha^{}_{i_1}, \dots, \alpha^{}_{i_r}\}$, where  $1 \le
{i_1} < \cdots <{i_r} \le l $. We put
$
\Delta_{0}= \Delta\cap\{\Pi^{}_0\}^{}_{\mathbb Z}=\{\be\in \Delta : \be=\sum_{\al_{i}\in\Pi_{0}}k_{i}\al_{i}, \ k_{i}\in\bb{Z}\},
$
where    
 $\{\Pi^{}_0\}^{}_{\mathbb Z}$ denotes   the set  of roots generated by
  $\Pi_{0}$  with integer coefficients (this is a  the subspace of ${\frak h}_0$).   Then $\Delta_{0}$ is a root subsystem of $\Delta$, which means that for any $\al, \be\in \Delta_{0}$ with $\al+\be\in \Delta$ it is also $\al+\be\in \Delta_{0}$. Thus $\Delta_{0}$ generates a  maximal complex reductive Lie subalgebra  $\fr{k}^{\mathbb{C}}=\fr{h}^{\mathbb{C}}\oplus\sum_{\be\in \Delta_{0}}\fr{g}_{\be}^{\mathbb{C}}$ of $\fr{g}^{\bb{C}}$, that is 
  $
  \fr{k}^{\bb{C}}=\fr{z}\oplus\fr{k}_{ss}^{\bb{C}},
  $
     where $\fr{z}$ is the center of $\fr{k}^{\bb{C}}$ and 
     $\fr{k}_{ss}^{\bb{C}}=[\fr{k}^{\bb{C}}, \fr{k}^{\bb{C}}]$ 
     is its semisimple part.  In fact,
      $\Delta_{0}$ is the root system of $\fr{k}^{\bb{C}}_{ss}$, and 
        $\Pi_{0}$ is the correpsonding system of simple roots. Thus we can obtain the decomposition $\fr{k}_{ss}^{\bb{C}}=\fr{h}^{\bb{C}}_{K}\oplus\sum_{\al\in\Delta_{0}}\fr{g}_{\al}^{\bb{C}}$.  Here  $\fr{h}^{\bb{C}}_{K}=\Span_{\bb{C}}\{H_{\al} : \al\in\Pi_{0}\}\subset\fr{h}^{\bb{C}}$ is the Cartan subalgebra   of $\fr{k}_{ss}^{\bb{C}}$ in $\fr{h}^{\bb{C}}$. Note that the center $\fr{z}$ (always non trivial) can be considered as the orthogonal complement of  $\fr{h}^{\bb{C}}_{K}$  in $\fr{h}^{\bb{C}}$ (with respect to the Killing form), that is $\fr{h}^{\bb{C}}=\fr{h}^{\bb{C}}_{K}\oplus\fr{z}$.

\begin{definition}\label{compl}
The roots of the set $\Delta_{\fr{m}}=\Delta\backslash \Delta_{0}$ are called  complemetary roots.
\end{definition}

Note that $\Delta_{\fr{m}}$ is not a root system in general. 
Choose a system of positive roots $\Delta^{+}$ for $\fr{g}^{\bb{C}}$ with respect to $\Pi$ and set $
\Delta_{\fr{m}}^{\pm}=\Delta^{\pm}\backslash \Delta_{0}^{\pm}$, where $\Delta_{0}^{\pm}=\Delta^{\pm}\cap\{\Pi_{0}\}_{\mathbb{Z}}$ and $\Delta^{-}=\{-\al : \al\in \Delta^{+}\}$.
  Then, the set $\Delta_{\Pi_{0}}=\Delta_{0}^{-}\cup \Delta^{+}=\Delta_{0}\cup (\Delta^{+}\backslash\Delta_{0}^{+})=\Delta_{0}\cup\Delta_{\fr{m}}^{+}$ is a root subsystem of $\Delta$ (\cite[p. 16]{Ale}) and   the  subalgebra 
\begin{equation}\label{parabol}
\fr{p}_{\Pi_{0}}=\fr{h}^{\bb{C}}\oplus\sum_{\al\in\Delta_{0}^{-}\cup\Delta^{+}}\fr{g}_{\al}^{\bb{C}}=\fr{h}^{\mathbb{C}}\oplus\sum_{\al\in \Delta_{0}\cup \Delta_{\fr{m}}^{+}}\fr{g}_{\al}^{\bb{C}}=\fr{h}^{\bb{C}}\oplus\sum_{\al\in \Delta_{0}}\fr{g}_{\al}^{\mathbb{C}}\oplus\sum_{\al\in \Delta_{\fr{m}}^{+}}\fr{g}^{\mathbb{C}}_{\al}
\end{equation}
is a parabolic subalgebra of $\fr{g}^{\mathbb{C}}$, since it contains the Borel subalgebra 
$\fr{b}=\fr{h}^{\mathbb{C}}\oplus\sum_{\al\in \Delta^+}\fr{g}^{\mathbb{C}}_{\al}\subset\fr{g}^{\bb{C}}$.  In particular,  we have a direct decomposition ${\frak p}_{\Pi_{0}}  =  {\frak k}^{\mathbb C}_{} \oplus {\frak r}$, where $\fr{r}=\sum_{\al\in\Delta_{\fr{m}}^{+}}\fr{g}_{\al}^{\bb{C}}$
is the nilradical of $\fr{p}$ (a regular nilpotent subalgebra of $\fr{g}^{\bb{C}}$).  It is known that any parabolic subalgebra is conjugate to a subalgebra of the form $\fr{p}_{\Pi_{0}}$ for some subset $\Pi_{0}\subset\Pi$,    (cf. \cite{Ale}, \cite{GOV}).  Note that the cases $\Pi_{0}=\emptyset$ and $\Pi_{0}=\Pi$ define the spaces $\fr{b}$ and $\fr{g}^{\bb{C}}$ respectively.  In this way we can construct a   flag manifold $M=G^{\mathbb{C}}/P$, where $G^{\mathbb{C}}$ is  the simply connected complex simple Lie 
group whose Lie algebra is $\fr{g}^{\mathbb{C}}$ and   $P\subset G^{\mathbb{C}}$ is the parabolic subgroup generated 
by $\fr{p}_{\Pi_{0}}$.  Since  $P$  is always connected, the flag manifold is a (compact) simply connected complex homogeneous manifold. The real representation $M=G/K=G/C(S)$ is obtained by the transitive action of $G$ on $M=G^{\mathbb{C}}/P$, where  the close connected subgroup $K=P\cap G$  is identified with the centralizer $C(S)$ of a torus $S\subset G$ (cf. \cite{Ale}, \cite{GOV}). Thus we always have $\rnk G=\rnk K$. 

Fix now a Weyl basis $E_{\alpha} \in {\frak g}^{\mathbb C}_{\alpha}
\,\,(\alpha \in \Delta )$ with
$$\begin{array}{ll} 
\left[E_{\alpha}, E_{-\alpha}\right] &=  -H_{\alpha} \,(\alpha \in \Delta)
 \\
{} \left[E_{\alpha}, E_{\beta}\right] &=  
\left\{
 \begin{array}{ll} N_{\alpha, \, \beta}E_{\alpha + \beta}& \mbox{if \quad} \alpha +\beta \in
   \Delta
   \\
   0 & \mbox{if \quad} \alpha +\beta \not\in \Delta, 
 \end{array}
\right.
\end{array}$$
where $N_{\alpha, \, \beta}$ = 
$N_{-\alpha, \, -\beta} \in {\mathbb R}.$
Then we have 
\begin{equation}\label{realg}
{\frak g} = {\frak h} + \sum_{\alpha \in \Delta} \left\{
{\mathbb R}(E_{\alpha} + E_{-\alpha}) + {\mathbb R} \sqrt{-1} (E_{\alpha}
- E_{-\alpha})\right\}
\end{equation}   
The Lie algebra $\fr{k}=\fr{p}_{\Pi_{0}}\cap\fr{g}$ of the isotropy subgroup $K$ is a Lie subalgebra of $\frak g$, given
by 
\begin{equation}\label{realk}
{\frak k} ={\frak h} + \sum_{\alpha \in \Delta_{0}^{+}}
 \left\{
{\mathbb R} (E_{\alpha} + E_{-\alpha}) + {\mathbb R}\sqrt{-1} (E_{\alpha}
- E_{-\alpha})\right\}.
\end{equation}
    As a real reductive subalgebra, $\fr{k}$ decomposes into a direct sum of its center $\fr{t}$ and its semisimple part  $[\fr{k}, \fr{k}]$. Note that
     \[
     {\frak t} =\fr{z}\cap \fr{h}_{0}= \Big\{ H \in   {\frak h}_0 :  ( H, \ \Pi_{0}) =   0   \Big\},
     \]
 where $( \ , \ )$ denotes the inner product on $\fr{h}_{0}$ (or on the dual space $\fr{h}_{0}^{*}$) induced by the Killing form and $\fr{z}$ is the center of $\fr{k}^{\bb{C}}$. One can also show that the fundamental weights $\{ \Lambda^{}_{i_1}, \cdots, \Lambda^{}_{i_r}\}$ form a basis of ${\frak t}$ and that $\fr{t}$ is a real form of $\fr{z}$.  If we set $ \frak s = \sqrt{-1}{\frak t} $ then  ${\frak k} $ is given by 
     ${\frak k} = {\frak z}({\frak s})$ (the Lie algebra of the centralizer of a torus $S$ in $G$).   

 All information which is contained in the pair $(\Pi, \Pi_{0})$ can be presented graphicaly by the  painted Dynkin diagram  of $M=G^{\bb{C}}/P=G/K$, which is defined as follows: 
\begin{definition}
 Let $\Gamma(\Pi)$ be the Dynkin diagram of $\Pi$.  By painting black in    $\Gamma(\Pi)$ the simple roots  $\al_{i}\in\Pi_{\fr{m}}=\Pi\backslash\Pi_{0}$
    we obtain the painted Dynkin diagram $\Gamma(\Pi_{\fr{m}})$ of $M$.
    \end{definition}  
  The isotropy subgroup $K$ can be determined from the painted Dynkin diagram $\Gamma(\Pi_{\fr{m}})$as follows: its  semisimple part 
      is defined by the subdiagram of white roots (which 
    is not necessarily connected), 
    and  each black root  gives  rise to a $\U(1)$-component 
    which determines the center $Z(K)$ of $K$.  
    We will  often make use of the diffeomorphism $\SU(n)\times \U(1)\cong \U(n)$.

 \subsection{Isotropy  summands, $\fr{t}$-roots and $G$-invariant Riemannian metrics}
Following the notation of the previous paragraph,  we assume that  a flag manifold
$M=G^{\bb{C}}/P=G/K$ is defined by a subset $\Pi_{0}\subset\Pi$, such that $\Pi_{\fr{m}}=\Pi \backslash \Pi^{}_0$ =
$\{\alpha^{}_{i_1}, \dots, \alpha^{}_{i_r}\}$, where  $1 \le
{i_1} < \cdots <{i_r} \le l $, and   let $\fr{g}=\fr{k}\oplus\fr{m}$ be a   reductive decomposition of the Lie algebra $\fr{g}$  with respect $B$. 
We identify the isotropy representation $\chi : K\to \GL(\fr{m})$ of $G/K $   with the adjoint representation $\Ad|_{K}$ restricted to $\fr{m}$.  
In view of relations (\ref{realg}), (\ref{realk}) and the splitting $\Delta_{\fr{m}}^{+}=\Delta^{+} \backslash \ \Delta_{0}^{+}$ 
it follows that 
\begin{equation}\label{realm}
{\frak m} = \sum_{\alpha \in \Delta_{\fr{m}}^{+}}
 \left\{
{\mathbb R} (E_{\alpha} + E_{-\alpha}) \oplus {\mathbb R}\sqrt{-1} (E_{\alpha}
- E_{-\alpha})\right\}.
\end{equation}
Thus a basis of $\fr{m}$ consists of the vectors $\{A_{\al}=(E_{\alpha} + E_{-\alpha}), \ B_{\al}=\sqrt{-1} (E_{\alpha}
- E_{-\alpha}) : \al\in \Delta_{\fr{m}}^{+}\}$.

   For integers $j_1, \dots, j_r$ with $(j_1, \dots, j_r) \neq (0, \dots, 0)$ we set 
       $$\Delta^{\fr{m}}(j_1, \dots, j_r ) = \left\{ \  \sum_{j=1}^{l} m_j \alpha_j \in \Delta^{+} : m_{i_1} = j_1, \dots, m_{i_r} = j_r  \ \right\}\subset \Delta^{+}. 
       $$
       Then it is $\displaystyle  \Delta_{\frak m}^{+} =  \Delta^+ \backslash\Delta_{0}^{+}= \bigcup_{j_1, \dots, \, j_r} \Delta^{\fr{m}}( j_1, \dots, j_r ) $.    
        For $  \Delta^{\fr{m}}( j_1, \dots, j_r ) \neq \emptyset$  we define an $\mbox{Ad}(K)$-invariant subspace ${\frak m}( j_1, \dots, j_r )$ of $\frak g$ by 
\[
{\frak m}( j_1, \dots,  j_r ) 
 =  \sum_{\alpha \in  \Delta^{\fr{m}}( j_1, \dots, \, j_r )}
 \left\{ {\mathbb R}A_{\al}+\bb{R}B_{\al}\right\}.
 \]
Thus we have a decomposition of $\frak m$ into mutually non equivalent irreducible $\mbox{Ad}_G(K)$-modules  $ {\frak m}( j_1, \dots,  j_r )$ as
  $\frak m = \sum_{j_1, \dots, \, j_r} {\frak m}( j_1, \dots,  j_r ). 
  $

 We consider  the restriction map $ \kappa  :  {\frak h}_0^* \to {\frak t}^*$, $\alpha \mapsto \alpha\vert_{\frak t}$ and note that this is a linear map. We set $\Delta_{\fr{t}} = \kappa(\Delta)$, $\kappa(\Delta_{0})=0$.
  
  \begin{definition}
  The elements of $\Delta_{\fr{t}}$ are called $\fr{t}$-roots. 
  \end{definition}
  
 Let $\fr{m}^{\bb{C}}=T_{o}(G/K)^{\bb{C}}$ be the complexification of $\fr{m}$. Then it is 
 $ \fr{m}^{\bb{C}}=\sum_{\al\in\Delta_{\fr{m}}}\fr{g}_{\al}^{\bb{C}}$  and thus a basis of $\fr{m}^{\bb{C}}$ is given by the root vectors $\{E_{\al} : \al\in\Delta_{\fr{m}}\}$.  
  
 \begin{prop}\label{tcor}\textnormal{(\cite{Ale}, \cite{AP})}
   There exists a 1-1 correspondence between $\fr{t}$-roots $\xi$ and irreducible submodules $\displaystyle {\frak m}_{\xi}$ of the $\mbox{Ad}_G(K)$-module ${\frak m}^{\mathbb C}_{}$ given by 
 $$\displaystyle \Delta_{\fr{t}}  \ni \xi \mapsto   {\frak m}_{\xi} = \sum_{\{\al\in \Delta_{\fr{m}} : \kappa(\alpha) = \xi\}} {\frak g }^{\mathbb C}_{\alpha}.$$ 
 \end{prop}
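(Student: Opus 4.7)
The plan is to establish the correspondence by showing each $\mathfrak{m}_\xi$ is a well-defined $\Ad(K)$-invariant subspace, proving its irreducibility, and then verifying that distinct $\mathfrak{t}$-roots produce inequivalent summands.

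First I would verify well-definedness and invariance. Since $\mathfrak{m}^{\mathbb{C}} = \bigoplus_{\alpha \in \Delta_{\mathfrak{m}}} \mathfrak{g}_\alpha^{\mathbb{C}}$ and every $\alpha \in \Delta_{\mathfrak{m}}$ has a nonzero coefficient on at least one simple root from $\Pi_{\mathfrak{m}}$, the restriction $\kappa(\alpha)$ is a nonzero element of $\mathfrak{t}^{*}$, so $\{\mathfrak{m}_\xi\}_{\xi \in \Delta_{\mathfrak{t}}}$ genuinely partitions $\mathfrak{m}^{\mathbb{C}}$. For $\Ad(K)$-invariance, the connectedness of $K$ reduces the check to $\mathfrak{k}^{\mathbb{C}}$-invariance; writing $\mathfrak{k}^{\mathbb{C}} = \mathfrak{h}^{\mathbb{C}} \oplus \bigoplus_{\beta \in \Delta_{0}} \mathfrak{g}_\beta^{\mathbb{C}}$, the Cartan factor preserves each $\mathfrak{g}_\alpha^{\mathbb{C}}$, and for $\beta \in \Delta_{0}$ the bracket $[\mathfrak{g}_\beta^{\mathbb{C}}, \mathfrak{g}_\alpha^{\mathbb{C}}]$ is either zero or equal to $\mathfrak{g}_{\alpha+\beta}^{\mathbb{C}}$, with $\kappa(\alpha+\beta) = \kappa(\alpha) + \kappa(\beta) = \xi + 0 = \xi$, so it lands in $\mathfrak{m}_\xi$. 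Non-equivalence of distinct summands is then immediate: the central torus with Lie algebra $\mathfrak{t} \subseteq \mathfrak{k}$ acts on $\mathfrak{m}_\xi$ by the character determined by $\xi$, so different $\mathfrak{t}$-roots yield non-isomorphic $K$-modules, and the map $\xi \mapsto \mathfrak{m}_\xi$ is therefore injective.

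The main obstacle is irreducibility. My strategy is to let $V \subseteq \mathfrak{m}_\xi$ be any nonzero $\Ad(K)$-invariant subspace. Because $\mathfrak{h}^{\mathbb{C}}$ acts diagonally on $\mathfrak{m}^{\mathbb{C}}$ with one-dimensional weight spaces $\mathfrak{g}_\alpha^{\mathbb{C}}$, $V$ must be a direct sum of such root spaces, hence contains some $\mathfrak{g}_{\alpha_{0}}^{\mathbb{C}}$ with $\kappa(\alpha_{0}) = \xi$. The proof then reduces to the combinatorial claim that for every $\alpha \in \Delta_{\mathfrak{m}}$ with $\kappa(\alpha) = \xi$ there is a chain $\alpha_{0} = \gamma_{0}, \gamma_{1}, \dots, \gamma_{N} = \alpha$ in $\Delta_{\mathfrak{m}}$ with consecutive differences $\gamma_{i+1} - \gamma_{i} \in \Delta_{0}$; successively applying $\ad(E_{\gamma_{i+1} - \gamma_{i}})$ and using the standard fact that $[\mathfrak{g}_\gamma^{\mathbb{C}}, \mathfrak{g}_\delta^{\mathbb{C}}] = \mathfrak{g}_{\gamma + \delta}^{\mathbb{C}}$ is nonzero whenever $\gamma + \delta \in \Delta$ then transports $\mathfrak{g}_{\alpha_{0}}^{\mathbb{C}}$ to $\mathfrak{g}_\alpha^{\mathbb{C}}$ inside $V$, forcing $V = \mathfrak{m}_\xi$. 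The existence of such a chain is the real technical heart: since $\alpha - \alpha_{0}$ lies in the $\mathbb{Z}$-span of $\Pi_{0}$ and $\Delta_{0}$ is the full root system of the semisimple part $\mathfrak{k}_{ss}^{\mathbb{C}}$, one builds the chain by induction on the height of $\alpha - \alpha_{0}$, at each step invoking the root-string lemma to find a simple root $\beta \in \Pi_{0}$ for which $\gamma_{i} + \beta$ (or $\gamma_{i} - \beta$) still lies in $\Delta_{\mathfrak{m}}$ and reduces the distance to $\alpha$. Combining the three steps yields the required bijection $\xi \leftrightarrow \mathfrak{m}_\xi$.
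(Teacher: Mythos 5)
The paper itself offers no proof of this proposition: it is quoted from the cited references (Alekseevsky, Alekseevsky--Perelomov), so your argument can only be compared with the standard proof there, and in fact it follows that route. Your first two steps are sound: since $\rnk K=\rnk G$ we have $\fr{h}^{\bb{C}}\subset\fr{k}^{\bb{C}}$, so any $\Ad(K)$-invariant subspace of $\fr{m}^{\bb{C}}$ is a sum of root spaces, invariance of $\fr{m}_{\xi}$ follows from $\kappa(\al+\be)=\kappa(\al)$ for $\be\in\Delta_{0}$, and inequivalence for $\xi\neq\xi'$ follows because the center of $K$ (with Lie algebra $\fr{t}$) acts on $\fr{m}_{\xi}$ by the character $\xi$; multiplicity-freeness plus complete reducibility then also gives the point you leave implicit, namely that \emph{every} irreducible submodule of $\fr{m}^{\bb{C}}$ equals some $\fr{m}_{\xi}$, which is what the stated bijection requires.

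The one thin spot is exactly where you locate it: the existence, at each step of the induction, of a root of $\Delta_{0}$ that moves you closer to $\al$ while staying inside $\Delta$. Invoking ``the root-string lemma'' is not by itself enough; the standard justification is the inner-product argument: with $\delta=\al-\gamma_i\neq 0$ one has $(\delta,\delta)>0$, hence $(\delta,\al)>0$ or $(\delta,\gamma_i)<0$; writing $\delta=\sum_{\be\in\Pi_{0}}c_{\be}\be$, in the second case some $\be$ satisfies $c_{\be}(\be,\gamma_i)<0$, so $\gamma_i+\mathrm{sign}(c_{\be})\be\in\Delta$ and the sum $\sum|c_{\be}|$ drops by one, while in the first case the same reasoning produces a shortening step at the $\al$-end of the chain. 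Your formulation only allows extending the chain from the $\gamma_i$-end, which the dichotomy does not always permit, so the induction should be stated as shortening the chain from either endpoint (chains are reversible, so this costs nothing). You should also record the small but necessary remark that all intermediate roots automatically lie in $\Delta_{\fr{m}}$ rather than $\Delta_{0}$, since their image under $\kappa$ is $\xi\neq 0$; granted these repairs, your proof is correct and is essentially the argument of the cited sources.
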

  By using  Proposition \ref{tcor} and the definition of $\fr{t}$-roots, it follows that  the $\mbox{Ad}_G(K)$-module ${\frak m}^{\mathbb C}_{}$ admits the decomposition ${\frak m}^{\mathbb C}_{} = \sum_{\xi \in \Delta_{\fr{t}}} {\frak m}_{\xi}$.  
  If we  denote by $\Delta_{\fr{t}}^{+}$   the set of all positive $\fr{t}$-roots (this is the restricton of the root system $\Delta^+$ under the map $\kappa$), then 
the nilradical is given by  ${\frak r}  = \sum_{\xi \in \Delta_{\fr{t}}^{+}} {\frak m}_{\xi}$. 

In order to obtain a decomposition of the real $\Ad(K)$-module $\fr{m}$ in terms of $\fr{t}$-roots, we use  the complex conjugation $\tau$ of ${\frak g }^{\mathbb C}_{}$ with respect to $\frak g$ (note that $\tau$ interchanges ${\frak g }^{\mathbb C}_{\alpha}$ and ${\frak g }^{\mathbb C}_{-\alpha}$). 
For a complex subspace $W$ of ${\frak g }^{\mathbb C}_{}$ we denote by 
 $W^{\tau}_{}$  the set of all fixed points of $\tau$.   Then  
 \begin{equation}\label{tangent}
 { \frak m} = \sum_{\xi \in \Delta_{\fr{t}}^{+}} \left( {\frak m}_{\xi} \oplus {\frak m}_{-\xi} \right)^{\tau}.
 \end{equation}
 
 Let $\Delta_{\fr{t}}^{+}=\{\xi_1, \ldots, \xi_q\}$. Then Proposition \ref{tcor} and relations (\ref{realm}), (\ref{tangent}) 
 imply that each real irreducible $\ad(\fr{k})$-submodule $\fr{m}_{i}=(\fr{m}_{\xi_{i}}\oplus  \fr{m}_{-\xi_{i}})^{\tau}$ $(1\leq i\leq q)$
corresponding to the positive $\fr{t}$-root $\xi_i$   is given by
\begin{equation}\label{bas}
\fr{m}_{i}=\sum_{\{\al\in \Delta_{\fr{m}}^+ \ :\ \kappa(\al)=\xi_{i}\}}\left\{
{\mathbb R} (E_{\alpha} + E_{-\alpha}) + {\mathbb R}\sqrt{-1} (E_{\alpha}
- E_{-\alpha})\right\}.
\end{equation}

The results obtained in the previous discussion are summarized in the following:

 \begin{prop}\label{DEC} Let $M=G/K$ be a generalized flag manifold defined by a subset $\Pi_{0}\subset\Pi$ such that $\Pi_{\fr{m}}=\Pi\backslash\Pi_{0}=\{\al_{i_{1}}, \ldots, \al_{i_{r}}\}$ with $1\leq i_{1}\leq\cdots \leq i_{r}\leq \ell$. Assume that $\fr{g}=\fr{k}\oplus\fr{m}$ is a $B$-orthogonal reductive decomposition. Then
 
 1) There exists a natural one-to-one correspondence between elements of the set $\Delta^{\fr{m}}( j_1, \dots, j_r ) \neq \emptyset$ and the set of positive $\fr{t}$-roots $\Delta_{\fr{t}}^{+}=\{\xi_1, \ldots, \xi_{q}\}$.   Thus there is a decomposition of $\frak m$ into $q$ mutually non-equivalent   irreducible $\Ad(K)$-modules 
  \[
  \displaystyle \frak m =  \sum_{\xi \in \Delta_{\fr{t}^{+}}} \left( {\frak m}_{\xi} \oplus {\frak m}_{-\xi} \right)^{\tau} = \sum_{i=1}^{q}(\fr{m}_{\xi_{i}}\oplus  \fr{m}_{-\xi_{i}})^{\tau} =\sum_{j_1, \dots, \, j_r} {\frak m}( j_1, \dots,  j_r ),
  \]
  for appropriate positive integers $j_1, \dots,  j_r$.
  
2) The    dimensions of the real $\Ad(K)$-modules $\fr{m}_{i}$ $(i=1, \ldots, q)$ corresponding to  the $\fr{t}$-root $\xi_i\in\Delta_{\fr{t}}^{+}$ are given by $\dim_{\bb{R}}\fr{m}_{i}=2\cdot|\{ \al\in \Delta_{\fr{m}}^{+} : \kappa(\al)=\xi_{i}\}|=2\cdot |\Delta^{\fr{m}}(j_{1}, \ldots, j_{r})|$, for appropriate positive integers $j_{1}, \ldots, j_{r}$.\footnote{We denote by $|S|$ the cardinality of a finite set $S$.}   
 
 3) Any $G$-invariant Riemannian metric $g$ on $G/K$ can be expressed as  
\begin{eqnarray}
 g  =\sum_{\xi \in \Delta_{\fr{t}}^{+}} x_{\xi} B|_{\left( {\frak m}_{\xi} + {\frak m}_{-\xi} \right)^{\tau}}  = \sum_{i=1}^{q} x_{\xi_{i}} B|_{\left( {\frak m}_{\xi_{i}} + {\frak m}_{-\xi_{i}} \right)^{\tau}}=\sum_{j_1, \dots, j_r}
x_{j_1 \cdots j_r}  B|_{ {\frak m}( j_1, \dots,  j_r )}  \label{eq22}
\end{eqnarray}
for positive real numbers $ x_{\xi}$, $x_{\xi_{i}}$, $ x_{j_1 \cdots j_r}$. Thus $G$-invariant Riemannian metrics on $M=G/K$ are parametrized by $q$ real positive parameters.
\end{prop}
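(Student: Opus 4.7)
The plan is to deduce the three assertions of Proposition \ref{DEC} directly from Proposition \ref{tcor} (the $\mathfrak t$-root correspondence), together with the structure of the restriction map $\kappa$ and a Schur-type argument. The main ingredient is to identify the fibers of $\kappa\colon \Delta_{\fr m}^{+}\to \Delta_{\fr t}^{+}$ with the sets $\Delta^{\fr m}(j_1,\dots,j_r)$.

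For part~(1), I would first show that $\kappa(\al_{i_1}),\dots,\kappa(\al_{i_r})$ are linearly independent in $\fr t^{*}$. This follows because $\{\Lambda_{i_1},\dots,\Lambda_{i_r}\}$ is a basis of $\fr t$ and the defining relation $2(\Lambda_i,\al_j)/(\al_j,\al_j)=\delta_{ij}$ yields $\al_{i_k}(\Lambda_{i_l})=\delta_{kl}(\al_{i_k},\al_{i_k})/2$, so the restrictions $\kappa(\al_{i_k})$ are (up to scalars) the dual basis to $\{\Lambda_{i_l}\}$. Now for any positive root $\al=\sum_{j=1}^{\ell}m_j\al_j\in\Delta_{\fr m}^{+}$ one has $\kappa(\al)=\sum_{k=1}^{r}m_{i_k}\kappa(\al_{i_k})$, since $\kappa(\Pi_0)=0$. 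Combined with the linear independence just established, this proves that two complementary roots $\al,\al'\in\Delta_{\fr m}^{+}$ satisfy $\kappa(\al)=\kappa(\al')$ if and only if the tuples of their coefficients on $\Pi_{\fr m}$ coincide. Hence the fibers of $\kappa$ over $\Delta_{\fr t}^{+}$ are precisely the nonempty sets $\Delta^{\fr m}(j_1,\dots,j_r)$, giving a bijection $\Delta^{\fr m}(j_1,\dots,j_r)\leftrightarrow \xi\in\Delta_{\fr t}^{+}$. Combining this with Proposition~\ref{tcor} and the decomposition $\fr m^{\bb C}=\sum_{\xi\in\Delta_{\fr t}}\fr m_{\xi}$, I would then apply the complex conjugation $\tau$ (which maps $\fr g_\al^{\bb C}$ to $\fr g_{-\al}^{\bb C}$, hence $\fr m_{\xi}$ to $\fr m_{-\xi}$) to obtain the real decomposition $\fr m=\sum_{\xi\in\Delta_{\fr t}^{+}}(\fr m_{\xi}\oplus\fr m_{-\xi})^{\tau}=\sum_{j_1,\dots,j_r}\fr m(j_1,\dots,j_r)$ as in~(\ref{tangent}). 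Mutual inequivalence of the summands as $\Ad(K)$-modules is inherited from the inequivalence of the $\fr m_\xi$ stated in Proposition~\ref{tcor}.

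Part~(2) is then immediate: by~(\ref{bas}) the real subspace $\fr m_i=(\fr m_{\xi_i}\oplus\fr m_{-\xi_i})^{\tau}$ has the basis $\{A_\al,B_\al : \al\in \Delta_{\fr m}^{+},\ \kappa(\al)=\xi_i\}$, so $\dim_{\bb R}\fr m_i=2|\{\al\in\Delta_{\fr m}^{+} : \kappa(\al)=\xi_i\}|=2|\Delta^{\fr m}(j_1,\dots,j_r)|$ for the tuple $(j_1,\dots,j_r)$ associated to $\xi_i$ in~(1). For part~(3), any $G$-invariant symmetric covariant $2$-tensor on $G/K$ corresponds to an $\Ad(K)$-invariant bilinear form on $\fr m$. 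Since $\fr m=\fr m_1\oplus\cdots\oplus\fr m_q$ is a decomposition into pairwise inequivalent irreducible $\Ad(K)$-modules, Schur's lemma forces any such form to be block-diagonal and a scalar multiple of $B|_{\fr m_i}$ on each block; imposing positive definiteness gives the expression~(\ref{eq22}) with $q$ positive parameters.

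The only technically delicate step is the injectivity of $\kappa$ on $\Pi_{\fr m}$ used in part~(1); once this is in hand the rest is bookkeeping via Proposition~\ref{tcor}, the explicit basis~(\ref{bas}), and Schur's lemma. I do not foresee obstacles beyond carefully keeping track of which objects are complex and which are the $\tau$-fixed real forms.
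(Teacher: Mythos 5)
Your argument is correct and follows essentially the same route as the paper, which states Proposition \ref{DEC} as a summary of the preceding discussion: the $\fr{t}$-root correspondence of Proposition \ref{tcor}, the relation (\ref{troots}) identifying fibers of $\kappa$ with the sets $\Delta^{\fr m}(j_1,\dots,j_r)$, the $\tau$-fixed real decomposition (\ref{tangent}) with basis (\ref{bas}), and the standard Schur-type parametrization (\ref{eq2}) of invariant metrics. The only addition is that you make explicit the linear independence of the restrictions $\kappa(\al_{i_k})$ via the fundamental weights, a point the paper takes for granted when asserting that $\Pi_{\fr t}$ is a basis of $\fr t^{*}$.
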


 We now show how we can find explicitly the set of $\fr{t}$-roots $\Delta_{\fr{t}}$. 
   Let 
 $
  \Pi_{\fr{t}}=\{\overline{\al}_{i_{j}}=\al_{i_{j}}|_{\fr{t}} : \al_{i_{j}}\in \Pi_{\fr{m}}\}. 
 $
 This set is a basis  of $\fr{t}^*$  in the sense that any $\fr{t}$-root can be written  as a linear combination of its elements with integer coefficients of the same sign.   
 In particular, by using the fact that $\kappa(\Delta_{0})=0$ we have that
   \begin{eqnarray}\label{troots}
  \kappa(\al)  
  =k_{i_{1}}\overline{\al}_{i_{1}}+\cdots+k_{i_{r}}\overline{\al}_{i_{r}}, \ \ (\al\in\Delta_{\fr{m}}^{+}).
  \end{eqnarray}
  Here the positive integers $k_{i_{j}}$ satisfy 
  $0\leq k_{i_{j}}\leq m_{i_{j}}$, where $m_{i_{j}}$ is the Dynkin mark of the simple root  $\al_{i_{j}}\in\Pi_{\fr{m}}$,
  and are not simultaneously zero.
 Therefore, by using the expressions of  the complementary roots in terms of simple roots,
  and applying formula (\ref{troots}), we can easily determine all positive $\fr{t}$-roots.  Elements  of $\Pi_{\fr{t}}$ are called {\it simple $\fr{t}$-roots} and they generalize the notion of simple roots  (this means that a simple $\fr{t}$-root $\kappa(\al_{i_{j}})=\al_{i_{j}}|_{\fr{t}}=\overline{\al}_{i_{j}}\in\Pi_{\fr{t}}$ is a positive $\fr{t}$-root, which can not be written as the sum of two positive $\fr{t}$-roots).

   \begin{example}\label{ISO2}{\sc \bf Flag manifolds  of $C_{\ell}=\Sp (\ell)$ (\cite{ACS}).
\textnormal{Consider the flag manifolds  $M=G/K= \Sp(\ell)/(\U(p)\times \U(q)\times \Sp(\ell-p-q ))$ with $\ell \geq 3$ and 
$1\le p, q, p+q \leq \ell -1$.  
This space  is defined by the painted Dynkin diagram $\Gamma(\Pi_{\fr{m}})$ with  $\Pi_{\fr{m}} = \{\al_{p}, \al_{p+q} : \Mark(\al_{p})=\Mark(\al_{p+q})=2 \}$ that is 
{{\small   \begin{center} 
   \vspace{10pt}
{\small
\begin{picture}(200,0)(0,-2)
\put(0, 0){\circle{4}}
\put(0,8.5){\makebox(0,0){$\alpha_1$}}
\put(2, 0){\line(1,0){10}}
\put(20, 0){\makebox(0,0){$\ldots$}}
\put(28, 0){\line(1,0){10}}
\put(40, 0){\circle{4.4}}
\put(40,8.5){\makebox(0,0){$\alpha_{p-1}$}}
\put(42, 0.3){\line(1,0){14}}
\put(58, 0){\circle*{4.4}}
\put(58,8.5){\makebox(0,0){$\alpha_p$}}
\put(58, 0){\line(1,0){14}}
\put(74, 0){\circle{4.4}}
\put(76, 0){\line(1,0){10}}
\put(94, 0){\makebox(0,0){$\ldots$}}
\put(102, 0){\line(1,0){10}}
\put(114, 0){\circle{4}}
\put(132,8.5){\makebox(0,0){$\alpha_{p+q}$}}
\put(116, 0){\line(1,0){14}}
\put(132, 0){\circle*{4}}
\put(134, 0){\line(1,0){14}}
\put(150, 0){\circle{4}}
\put(152, 0){\line(1,0){8}}
\put(170, 0){\makebox(0,0){$\ldots$}}
\put(180, 0){\line(1,0){8}}
\put(190, 0){\circle{4}}
\put(190,8.5){\makebox(0,0){$\alpha_{\ell-1}$}}
\put(194.5, 1.2){\line(1,0){14}}
\put(194.5, -1.2){\line(1,0){14}}
\put(191.16, -1.9){\scriptsize $<$}
\put(210.5, 0){\circle{4}}
\put(210,8.5){\makebox(0,0){$\alpha_\ell$}}
\put(215.5,-1){\makebox(0,0){.}}  
\end{picture}
\vspace{10pt}
}\end{center}}}
 \noindent and thus $M$ is of Type  B (cf. Introduction).
   We will show that   $\fr{m}=T_{o}M$ decomposes into a direct sum of six pairwise inequivalent $\Ad(K)$-submodules, thus $M$ does not appear in Table 2. 
   } 
 \textnormal{Following the notation of \cite[p. 3781]{AA},  we consider an orthonormal basis of $\bb{R}^{\ell}$ given by $\{e^1_i, e^2_{j}, \pi_{k}\}$  with $1\leq i\leq p$ and $1\leq j\leq q$ and $1\leq k\leq \ell-p-q$. Then  a system of positive roots   for $C_{\ell}$  is given by  
\begin{eqnarray*}
 \Delta^{+} &=& \{e^1_i\pm e^1_{j} : 1\leq i<j\leq p\}\cup\{e^{1}_{i}\pm e_{j}^{2} : 1\leq i\leq p, \ 1\leq j\leq q\}\cup \ \{e^{2}_{i}\pm e^{2}_{j} : 1\leq i<j\leq q\} \\ 
 && \ \cup \ \{e^1_i\pm\pi_{k} : 1\leq i\leq p, \ 1\leq k\leq \ell-p-q\} \cup\{e_{j}^{2}\pm\pi_{k} : 1\leq j\leq q, \ 1\leq k\leq \ell-p-q \}\\
 && \ \cup \ \{\pi_{i}\pm\pi_{j} : 1\leq i<j\leq\ell-p-q\}\cup\{2e^{1}_{i},  \ 2e^{2}_{j}, \  2\pi_{k} \}.
 \end{eqnarray*}
A  basis of simple roots is given by
   \begin{eqnarray*}
   \Pi &=&\{\al_1=e_1^1-e^1_2,\ldots,  \al_{p-1}=e^1_{p-1}-e^{1}_{p}, \ \al_{p}=e^{1}_{p}-e^{2}_{1}\} \\
   && \ \cup \ \{\al_{p+1}=e^{2}_{1}-e^{2}_{2}, \ldots, \al_{p+q-1}=e^{2}_{q-1}-e^{2}_{q}, \ldots, \al_{p+q}=e^{2}_{q}-\pi_{1}\} \\
   && \ \cup \ \{ \phi_1=\pi_1-\pi_2,  \ldots, \phi_{\ell-p-q-1}=\pi_{\ell-p-q-1}-\pi_{\ell-p-q}, \ \phi_{\ell-p-q}=2\pi_{\ell-p-q}\}.
   \end{eqnarray*}
The root system of the semisimple part of the isotropy subgroup $K$ is given by 
\[
\Delta_{0}^{+}=\{e^1_{i}-e^1_{j} : 1\leq i< j\leq p\}\cup\{e^2_{i}-e^2_{j} : 1\leq i< j\leq q\}\cup\{\ \pi_{i}\pm\pi_{j}, \ 2\pi_{k} : 1\leq i<j\leq\ell-p-q\},
\]
thus the positive complementary roots are
  $
  \Delta_{\fr{m}}^{+}=\{e^{1}_i+ e^{1}_{j}, \ e^1_{i}\pm e^{2}_{j}, \ e^{2}_i+ e^{2}_{j}, \ e^1_i\pm\pi_{k}, \ e^{2}_{j}\pm\pi_{k}, \ 2e^1_i, \  2e^{2}_{j}\}.
  $
 Let  $\al=\sum_{k=1}^{\ell}c_{k}\al_{k}\in \Delta_{\fr{m}}^{+}$.  Since $\Pi_{\fr{m}}=\{\al_{p}, \al_{p+q}\}$, by applying  relation (\ref{troots})  we obtain  that   $\kappa(\al)=c_{p}\overline{\al}_{p}+c_{p+q}\overline{\al}_{p+q}\in\Delta_{\fr{t}}^{+}$.  Here the  coefficients $c_{p}, c_{p+q}$ are such that $0\leq c_p, c_{p+q}\leq 2$,   and they are not simultaneously equal to zero. 
   In particular,  by expressing the positive complementary roots in terms of   the simple roots we obtain that
  \begin{eqnarray*}
\kappa(e^1_i+e^1_j) &=& \kappa(\al_{i}+\cdots+\al_{j}+2\al_{j+1}+\cdots+2\al_{p}+\cdots+2\al_{p+q}+2\phi_{1}+\cdots+ \phi_{\ell-p-q}) \\
  &=& 2\overline{\al}_p+2\overline{\al}_q, \\
\kappa(e^1_i+e^2_j) &=& \kappa(\al_{i}+\cdots+\al_{p}+\cdots+\al_{p+j}+2\al_{p+j+1}+\cdots+2\al_{p+q}+2\phi_{1}+\cdots+ \phi_{\ell-p-q}) \\
&=& \overline{\al}_p+2\overline{\al}_{p+q}, \\
\kappa(e^1_i-e^2_j) &=& \kappa(\al_{i}+\cdots+ \al_{p}+\cdots +\al_{p+j})= \overline{\al}_{p}, \\
\kappa(e^2_i+e^2_j) &\stackrel{i>p}{=}& \kappa(\al_{i}+\cdots+\al_{j}+2\al_{j+1}+\cdots+2\al_{p+q}+2\phi_{1}+\cdots+\phi_{\ell-p-q})=  2\overline{\al}_{p+q}, \\
\kappa(e^1_i-\pi_k) &=& \kappa(\al_i+\cdots + \al_{p}+\cdots+\al_{p+q}+\phi_{1}+\cdots + \phi_{k-1})=\overline{\al}_p+\overline{\al}_{p+q} \\
\kappa(e^1_i+\pi_k) &=& \kappa(\al_i+\cdots + \al_{p}+\cdots+\al_{p+q}+\phi_{1}+\cdots +\phi_{k-1}+2\phi_{k}+\cdots +\phi_{\ell-p-q}) = \overline{\al}_p+\overline{\al}_{p+q}, \\
 \kappa(e^2_j-\pi_k) &\stackrel{j>p}{=}& \kappa(\al_{j}+\cdots\al_{p+q}+\phi_{1}+\cdots+\phi_{k-1}) =\overline{\al}_{p+q}, \\
 \kappa(e^2_j+\pi_k) &\stackrel{j>p}{=}& \kappa(\al_{j}+\cdots\al_{p+q}+\phi_{1}+\cdots+\phi_{k-1}+2\phi_{k}+\cdots +\phi_{\ell-p-q}) =\overline{\al}_{p+q}, \\  
\kappa(2e^1_i) &=& \kappa(2\al_{i}+\cdots+2\al_{p}+\cdots+2\al_{p+q}+2\phi_{1}+\cdots+2\phi_{\ell-p-q-1}+\phi_{\ell-p-q})= 2\overline{\al}_{p}+2\overline{\al}_{p+q}, \\
\kappa(2e^2_j) &\stackrel{j>p}{=}& \kappa(2\al_{j}+\cdots+2\al_{p+q}+2\phi_{1}+\cdots+2\phi_{\ell-p-q-1}+\phi_{\ell-p-q})=2\overline{\al}_{p+q}.
\end{eqnarray*}
Thus the  set of $\fr{t}$-roots is given by 
$\Delta_{\fr{t}}^+ = \{\overline{\al}_{p}, \ \overline{\al}_{p+q}, \ \overline{\al}_{p}+\overline{\al}_{p+q}, 
 \ 2\overline{\al}_{p+q},  \ \overline{\al}_{p} + 2 \overline{\al}_{p+q},   \ 2 \overline{\al}_{p} + 2 \overline{\al}_{p+q} \}$ and 
  according to Proposition \ref{DEC}  (1) we obtain the decomposition
 $\fr{m} = \fr{m}_1\oplus\fr{m}_{2}\oplus\fr{m}_{3}\oplus\fr{m}_{4}\oplus\fr{m}_{5}\oplus\fr{m}_{6}  = \fr{m}(1, 0)\oplus\fr{m}(0, 1)\oplus\fr{m}(1, 1)\oplus\fr{m}(0, 2)\oplus\fr{m}(1, 2)\oplus\fr{m}(2, 2)$.   
 Note that the exception  $p+q = \ell$ determines the space $M= \Sp(\ell)/(\U(p)\times \U(q) )$  with  four isotropy summands,   since in this case the $\fr{t}$-roots are given by 
$\Delta_{\fr{t}}^+ = \{\overline{\al}_{p}, \ \overline{\al}_{\ell}, \ \overline{\al}_{p}+\overline{\al}_{\ell}, 
  \ 2 \overline{\al}_{p} +  \overline{\al}_{\ell} \}$ (\cite{Chry2}). } }
  \end{example}


\section{Flag manifolds with five isotropy summnads}

 \subsection{On the isotropy represantation of flag manifolds}
 
 Proposition \ref{DEC}  provides all the necessary ingredients for the 
classification of flag manifolds with a certain number of isotropy summands. 
However, we essentially need to work on a case by case basis, which means that in the Dynkin diagram $\Gamma(\Pi)$ of each simple Lie group $G$ we need to   paint  black all possible combinations of roots of certain 
 Dynkin marks. 
 	A systematic approach for flag manifolds determined by a classical Lie group is given  
in \cite{Ale2}, but be aware of certain misprints.  Recall that isotropy irreducible flag manifolds are the isotropy irreducible Hermitian symmetric spaces of compact type, and are determined by painting  black  exactly one simple root of Dynkin mark 1.  
Flag manifolds with two  isotropy summnads were classified in \cite{AChry}.  These spaces are determined  by pairs $(\Pi, \Pi_{0})$ such that $\Pi\backslash\Pi_{0}=\{\al_{p} : \Mark(\al_{p})=2\}$.  Flag manifolds with three isotropy summands were classified in \cite{Kim}, where it was shown that such spaces are defined  by  pairs $(\Pi, \Pi_{0})$ with that $\Pi\backslash\Pi_{0}=\{\al_{p} : \Mark(\al_{p})=3\}$ or $\Pi\backslash\Pi_{0}=\{\al_{i}, \al_{j} :  \Mark(\al_{i})=\Mark(\al_{j})=1\}$.  Finally, the classification of all flag manifolds with four isotropy summands was given in \cite{Chry2}, where  it  was shown that such spaces are determined by pairs $(\Pi, \Pi_{0})$ such that $\Pi\backslash\Pi_{0}=\{\al_{p} : \Mark(\al_{p})=4\}$ or $\Pi\backslash\Pi_{0}=\{\al_{i}, \al_{j} :   \Mark(\al_{i})=1, \ \Mark(\al_{j})=2\}$ (however the correpsondence with the second type of pairs is not one-to-one, see the Introduction). 

In this section we will prove that the only generalized flag manifolds $G/K$  (different from the space $\E_8/\U(1)\times \SU(4)\times \SU(5)$), whose isotropy representation decomposes into five isotropy summands  are the spaces determined  by the pairs $(\Pi, \Pi_{0})$ of Types A and B presented in Table 1  in the Introduction. 
Furthermore, we will show that these pairs define {\it isometric flag manifolds} (as real manifolds), in the sense that {\it there is an isometry which permutes the associated isotropy summands and identifies the different reductive decompositions} (which are defined by the different pairs $(\Pi, \Pi_{0})$).  Therefore, our study focuces at the spaces listed in Table 2.
  This isometry (which is induced by the action of the associated Weyl group  on the root  system of $G$) enables us to  study the classification problem of homogeneous Einstein metrics only for one possible pair $(\Pi, \Pi_{0})$, therefore we will only work with flag manifolds
  of Type A.

\subsection{The classification of flag manifolds with five isotropy summnads}

As mentioned in the Introduction, all  flag manifolds of Types A and B are such that $b_{2}(M)=2$, which means that  
$\Pi_{\fr{m}}=\Pi\backslash\Pi_{0}$ contains only two simple roots. 
 For  convenience of the reader in Table 3 we present all possible pairs $(\Pi, \Pi_{0})$ which determine flag manifolds with $b_{2}(M)=2$ (for completeness we also  include  those which determine flag  manifolds with $b_{2}(M)=1$).    
   In this table, the first column contains the first Betti number, the second column indicates the Dynkin marks of the roots
 painted black, the third column shows the number $q$ of isotropy summands of the  flag manifolds obtained (in some cases there are more than one possibilities),
 the fourth column shows for which Lie groups can arise such pairs (and thus such flag manifolds),  and the last column  gives references for the homogeneous Einstein metrics on the corresponding spaces.
 
 \medskip
{\small{\begin{center}
{\bf Table 3.} \ {\small The isotropy representation and Einstein metrics  on flag manifolds $M=G/K$ with $b_{2}(M)=1$ or $2$ }  
\end{center}
\begin{center}
 \begin{tabular}{c|l|l|l|l}
  $b_{2}(M)$ &  Dynkin marks of $\Pi\backslash\Pi_{0}$   & $\fr{m}= \bigoplus_{i=1}^{q}\fr{m}_{q}$    & Type of $G$  & Einstein metrics \\
                \thickline
1 &  $\Mark(\al_{p})=1$                 & $q=1$           & Irred. Symmetric space  & \cite{Hel} \\
1 & $\Mark(\al_{p})=2$                  & $q=2$           & $B_{\ell}, C_{\ell}, D_{\ell}, \G_2, \F_4, \E_6, \E_7, \E_8$  & \cite{Chry1} \\
1 & $\Mark(\al_{p})=3$                  & $q=3$           & $\G_2, \F_4, \E_6, \E_7, \E_8$ & \cite{Kim}, \cite{AC} \\
1 & $\Mark(\al_{p})=4$                  & $q=4$           & $\F_4, \E_7, \E_8$ & \cite{Chry2} \\
1 & $\Mark(\al_{p})=5$                  & $q=5$           & $\E_8$        & \cite{CS} \\
1 & $\Mark(\al_{p})=6$                  & $q=6$           & $\E_8$        & \cite{CS}, open  \\
2 & $\Mark(\al_{p})=1, \Mark(\al_{q})=1$ & $q=3$           & $A_{\ell}, D_{\ell}, \E_6$ & \cite{Kim} \\
2 & $\Mark(\al_{p})=1, \Mark(\al_{q})=2$ & $q=4, 5$        & $B_{\ell}, C_{\ell},  D_{\ell}, \E_6, \E_7$ & \cite{Chry2}, \cite{ACS1}, \cite{ACS2} \\
2 & $\Mark(\al_{p})=1, \Mark(\al_{q})=3$ & $q=6$           & $\E_6, \E_7$ & open \\ 
2 & $\Mark(\al_{p})=1, \Mark(\al_{q})=4$ & $q=8$           & $\E_7$ & open \\ 
2 & $\Mark(\al_{p})=2, \Mark(\al_{q})=2$ & $q=5, 6$        &  $B_{\ell}, C_{\ell}, D_{\ell},   \F_4, \E_6, \E_7, \E_8$ &  \cite{ACS}, \cite{Chry}, open \\ 
2 & $\Mark(\al_{p})=2, \Mark(\al_{q})=3$ & $q=6, 7, 8$   
  & $\G_2, \F_4, \E_6, \E_7, \E_8$ &  \cite{ACS3},  open \\
2 & $\Mark(\al_{p})=2, \Mark(\al_{q})=4$ & $q= 7, 8, 9$      & $\F_4, \E_7, \E_8$ &    open \\ 
2 & $\Mark(\al_{p})=2, \Mark(\al_{q})=5$ & $q=10$           & $\E_8$ &    open \\
2 & $\Mark(\al_{p})=2, \Mark(\al_{q})=6$ & $q=11$           & $\E_8$ &    open \\
2 & $\Mark(\al_{p})=3, \Mark(\al_{q})=3$ & $q=8, 9$         & $\E_7, \E_8$ &    open \\
2 & $\Mark(\al_{p})=3, \Mark(\al_{q})=4$ & $q=8, 9, 10$     & $\F_4, \E_7, \E_8$ &    open \\
2 & $\Mark(\al_{p})=3, \Mark(\al_{q})=5$ & $q=10, 11$       & $\E_8$ &    open \\
2 & $\Mark(\al_{p})=3, \Mark(\al_{q})=6$ & $q=10, 14$       & $\E_8$ &    open \\
2 & $\Mark(\al_{p})=4, \Mark(\al_{q})=4$ & $q=12$           & $\E_8$ &    open \\
2 & $\Mark(\al_{p})=4, \Mark(\al_{q})=5$ & $q=10, 11$       & $\E_8$ &    open \\
2 & $\Mark(\al_{p})=4, \Mark(\al_{q})=6$ & $q=11, 14$       & $\E_8$ &    open \\
2 & $\Mark(\al_{p})=5, \Mark(\al_{q})=6$ & $q=12$           & $\E_8$ &    open \\
 \end{tabular}
 \end{center}}}
\medskip

In order to show that flag manifolds  with five isotropy summnands (different from the space $\E_8/\U(1)\times \SU(4)\times \SU(5)$ with $b_{2}(M)=1$)  are determined only by the pairs $(\Pi, \Pi_{0})$ presented in Table 1 of the Introduction, we proceed into two steps. 
First we show that the spaces determined by the pairs in Table 1 have in fact five isotropy summands. 
Next, we prove that the other existing  pairs $(\Pi, \Pi_{0})$ of Type B  determine flag manifolds whose positive $\fr{t}$-root system $\Delta_{\fr{t}}^{+}$ contains more than five elements.  
Note that pairs of Type A are excluded form the study due to \cite[Prop. 5]{Chry2} and the first step.
All other pairs $(\Pi, \Pi_{0})$ such that $\Pi\backslash\Pi_{0}=\{\al_{i}, \al_{j}\}$ and different from Types A and B 
can be treated in a similar manner, so we refer
to \cite[Prop. 6]{Chry2} for further details and Table 3 the final results. 

We need the following useful

\begin{lemma}\label{b2}  Generalized flag manifolds $M=G/K$ with $b_2(M)\ge 3$ have more than five isotropy summands. 
\end{lemma}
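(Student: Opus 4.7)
The plan is as follows. By Proposition \ref{DEC}(1), the number $q$ of isotropy summands of $G/K$ equals the cardinality $|\Delta_{\fr{t}}^{+}|$ of the positive $\fr{t}$-root system, while as recalled in Section 3 one has $b_2(M)=\dim_{\bb{R}}\fr{t}=r$ where $r:=|\Pi\setminus\Pi_0|$ is the number of black simple roots. Hence the claim is equivalent to the implication $r\geq 3 \Rightarrow |\Delta_{\fr{t}}^{+}|\geq 6$. I would begin by reducing to $r=3$: enlarging $\Pi\setminus\Pi_0$ enlarges $\fr{t}$ and only refines the restriction map $\kappa$, so the number of distinct nonzero values $\kappa(\alpha)$, $\alpha\in\Delta^{+}$, is monotone non-decreasing in $r$. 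Write then $\Pi\setminus\Pi_0=\{\alpha_{i_1},\alpha_{i_2},\alpha_{i_3}\}$.

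Three positive $\fr{t}$-roots are immediate, namely the simple $\fr{t}$-roots $\overline{\alpha}_{i_1},\overline{\alpha}_{i_2},\overline{\alpha}_{i_3}$. To produce three more I would invoke the classical fact that, for the (connected) Dynkin diagram $\Gamma(\Pi)$ of a simple Lie algebra, the sum $\sigma_S:=\sum_{\alpha\in S}\alpha$ over any connected subset $S\subseteq\Pi$ is a positive root; a one-line induction on $|S|$ suffices, since if $\sigma_S\in\Delta^{+}$ and a simple root $\gamma\in\Pi\setminus S$ is adjacent to some (unique) $\alpha\in S$, then $(\sigma_S,\gamma)=(\alpha,\gamma)<0$, forcing $\sigma_S+\gamma\in\Delta^{+}$.

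I would apply this to the subset $P_{jk}\subseteq\Pi$ of simple roots on the unique minimal path from $\alpha_{i_j}$ to $\alpha_{i_k}$ in the tree $\Gamma(\Pi)$, for each of the three pairs $1\le j<k\le 3$. The resulting positive root $\sigma_{P_{jk}}$ has coefficient $1$ on the two endpoints $\alpha_{i_j},\alpha_{i_k}$ and coefficient $1$ on the third black root $\alpha_{i_l}$ precisely when $\alpha_{i_l}\in P_{jk}$. Because $\Gamma(\Pi)$ is a tree, among the three paths $P_{jk}$ at most one contains $\alpha_{i_l}$ (the one crossing the ``middle'' black node, if one exists). Consequently, in the ``fork'' configuration (no black root separates the other two) the three restrictions $\kappa(\sigma_{P_{jk}})=\overline{\alpha}_{i_j}+\overline{\alpha}_{i_k}$ are three new, mutually distinct positive $\fr{t}$-roots; in the ``chain'' configuration (say $\alpha_{i_2}$ between $\alpha_{i_1}$ and $\alpha_{i_3}$) the restrictions are $\overline{\alpha}_{i_1}+\overline{\alpha}_{i_2}$, $\overline{\alpha}_{i_2}+\overline{\alpha}_{i_3}$, and $\overline{\alpha}_{i_1}+\overline{\alpha}_{i_2}+\overline{\alpha}_{i_3}$, again three new mutually distinct positive $\fr{t}$-roots. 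In either case $|\Delta_{\fr{t}}^{+}|\geq 6$, proving the lemma.

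The only potential obstacle is that in non-simply-laced types the sum of simple roots over a connected subdiagram need not have all unit coefficients in terms of a root basis; however, the \emph{minimal} path sum with unit coefficients is always a root by the inductive argument above, which is all we use. The monotonicity step and the tree combinatorics for three marked nodes are both elementary, so no delicate case check is needed beyond the two configurations already listed.
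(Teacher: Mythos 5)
Your argument is correct, but it follows a genuinely different route from the paper's. The paper also reduces to three painted simple roots, but then proceeds type by type: it computes the six positive $\fr{t}$-roots explicitly for $A_{\ell}$ (where all Dynkin marks equal $1$), observes that any painted diagram containing an $A_m$-subdiagram through the three painted nodes inherits these six $\fr{t}$-roots, and then disposes of the remaining configurations ($D_{\ell}$ with $\{\al_i,\al_{\ell-1},\al_{\ell}\}$ and the exceptional groups $\E_6$, $\E_7$, $\E_8$) by separate checks, some of which are only asserted. You replace this case analysis by a uniform combinatorial argument: the three simple $\fr{t}$-roots together with the restrictions of the three positive roots $\sigma_{P_{jk}}=\sum_{\al\in P_{jk}}\al$ attached to the paths joining the painted nodes in the tree $\Gamma(\Pi)$, using the classical fact (valid in every type, precisely because of the induction with $(\sigma_S,\gamma)<0$ that you give) that the sum of the simple roots over a connected subdiagram is a root; the fork/chain dichotomy, which comes from the uniqueness of the median of three vertices in a tree, then produces six pairwise distinct elements of $\Delta_{\fr{t}}^{+}$, the distinctness following from the fact that $\{\overline{\al}_{i_1},\overline{\al}_{i_2},\overline{\al}_{i_3}\}$ is a basis of $\fr{t}^{*}$. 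You also make explicit the monotonicity reduction from $b_2\ge 3$ to $b_2=3$ (restriction to a larger $\fr{t}$ separates at least as many values of $\kappa$), which the paper only asserts in one line. What your approach buys is independence from the list of Dynkin diagrams and from the explicit $A_{\ell}$, $D_{\ell}$ and exceptional verifications; what the paper's buys is that it stays entirely inside the concrete $\fr{t}$-root computations already set up in Section 3. Your closing concern about non-simply-laced types is unnecessary: the connected-subdiagram-sum fact requires no simply-lacedness, exactly by the inductive argument you sketch.
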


\begin{proof}
At first  we consider the case of $b_2(M)=3$ and  the simple Lie groups $A_{\ell}$. 
We assume that the subset $\Pi_{0}\subset\Pi$ is such that $\Pi_{\fr{m}}=\Pi\backslash\Pi_{0}= \{\al_{i}, \al_{j}, \ \al_{k} \}$ where $i, j, k$ are different each other. Then  $\fr{t}$ is 3-dimensional and  a $\fr{t}$-basis is given by $\Pi_{\fr{t}}=\{\overline{\al}_{i}=\al_{i}\big|_{\fr{t}}, \overline{\al}_{j}=\al_{j}\big|_{\fr{t}}, \overline{\al}_{k}=\al_{k}\big|_{\fr{t}}\}$ with  $\Mark(\al_{i})=\Mark(\al_{j})=\Mark(\al_{k})=1$.   Let $\al=\sum_{p=1}^{\ell}c_{p}\al_{p}\in \Delta^{+}_{\fr{m}}$ be a positive complementary root. Then, by applying (\ref{troots}) we conclude  that any positive $\fr{t}$-root is given by 
$\kappa(\al)=c_{i}\overline{\al}_{i}+c_{j}\overline{\al}_{j}+c_{k}\overline{\al}_{k}$, where $0\leq c_{i}, c_{j}, c_{k}\leq 1$ cannot be simultaneosuly equal to zero and we see   that   the system $\Delta_{\fr{t}}^{+}$ consists of the $\fr{t}$-roots 
$
\overline{\al}_{i}, \ \overline{\al}_{j}, \ \overline{\al}_{k}, \ \overline{\al}_{i}+\overline{\al}_{j},\  \overline{\al}_{j}+\overline{\al}_{k}, \ \overline{\al}_{i}+\overline{\al}_{j}+\overline{\al}_{k}$. 
Thus $|\Delta_{\fr{t}}^{+}|=6$ and $\fr{m}=T_{o}M$  decomposes into more than five isotropy summands.  
 If  $b_2(M)>3$, then   the system $\Delta_{\fr{t}}^{+}$ contains the $\fr{t}$-roots of the form 
$
\overline{\al}_{i}, \ \overline{\al}_{j}, \ \overline{\al}_{k}, \ \overline{\al}_{i}+\overline{\al}_{j},\  \overline{\al}_{j}+\overline{\al}_{k}, \ \overline{\al}_{i}+\overline{\al}_{j}+\overline{\al}_{k}$, and hence  $|\Delta_{\fr{t}}^{+}|>5$. 

\noindent Now consider the case when the Dynkin  diagram of a simple Lie algebra contains the Dynkin subdiagram of type  $A_{m}$ and it contains these roots $  \{\al_{i}, \al_{j}, \ \al_{k} \}$. Then we find that $|\Delta_{\fr{t}}^{+}|>5$. The other cases  are $ D_{\ell}$ with $ \{\al_{i}, \al_{\ell -1}, \ \al_{\ell} \}$,
$\E_6$, $\E_7$ and $\E_8$. But for the case of $ D_{\ell}$  with $ \{\al_{i}, \al_{\ell -1}, \ \al_{\ell} \}$ we see that $|\Delta_{\fr{t}}^{+}|>5$. 
If $\E_6$, $\E_7$ or $\E_8$ contains the Dynkin subdiagram of type $ D_{m}$ which contains these roots $\{\al_{i}, \al_{m-1}, \ \al_{m} \}$, then it follows  that  $|\Delta_{\fr{t}}^{+}|>5$. For the case  $\E_{6}$ with $ \{\al_{1}, \al_{5}, \ \al_{6} \}$ we get also $|\Delta_{\fr{t}}^{+}|>5$. If $\E_{7}$  or $\E_8$ contains the Dynkin subdiagram of type $\E_{6}$ which contains these roots $\{\al_{i}, \al_{j}, \ \al_{k} \}$, then we see that  $|\Delta_{\fr{t}}^{+}|>5$. The remaining cases are $\E_{7}$ with $ \{\al_{1}, \al_{6}, \ \al_{7} \}$ and $\E_8$ with $ \{\al_{i}, \al_{7}, \ \al_{8} \}$ where $i = 1,2, 3$. For these cases  one can easily prove  that  $|\Delta_{\fr{t}}^{+}|>5$.  
\end{proof}

Thus flag manifolds with five isotropy summands are determined by pairs $(\Pi, \Pi_0)$ with 
$|\Pi\setminus\Pi _0|=2$.

\begin{prop}\label{classif}
Let $G$ be a compact connected simple Lie group and let $\Pi=\{\al_1, \ldots, \al_{\ell}\}$ be a system of simple roots of the associated root system of $G$.  Consider a subset $\Pi_{0}\subset \Pi$    of simple roots, such that  $\Pi\backslash\Pi_{0}$  contains  exactly  two simple roots.  Then, the only pairs $(\Pi, \Pi_{0})$ which determine flag manifolds $G/K$ whose isotropy representation decomposes into five pairwise inquivalent irreducible $\Ad(K)$-submodules are the pairs of Type A and B presented in Table 1 of the Introduction.
\end{prop}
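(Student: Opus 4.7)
The plan is to apply Proposition \ref{DEC}, which identifies the number of irreducible $\Ad(K)$-summands of $\fr{m}$ with the cardinality of the positive $\fr{t}$-root system, after using Lemma \ref{b2} to reduce to $|\Pi\setminus\Pi_{0}|=2$. Writing $\Pi\setminus\Pi_{0}=\{\alpha_{i},\alpha_{j}\}$ with Dynkin marks $m_{i}=\Mark(\alpha_{i})\le m_{j}=\Mark(\alpha_{j})$, every positive $\fr{t}$-root has the form $c_{i}\overline{\alpha}_{i}+c_{j}\overline{\alpha}_{j}$ with $0\le c_{i}\le m_{i}$, $0\le c_{j}\le m_{j}$, $(c_{i},c_{j})\neq(0,0)$, by formula (\ref{troots}), yielding the a~priori bound $|\Delta_{\fr{t}}^{+}|\le(m_{i}+1)(m_{j}+1)-1$. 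The task is then to determine, for each mark-pair $(m_{i},m_{j})$ and each admissible painted pair, the exact cardinality of $\kappa(\Delta_{\fr{m}}^{+})$.

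I would proceed by a case analysis on $(m_{i},m_{j})$. The case $(1,1)$ gives at most three summands and is excluded at once. For $m_{j}\ge 3$, I would exhibit directly at least six distinct elements of $\Delta_{\fr{t}}^{+}$: the two simple restrictions $\overline{\alpha}_{i},\overline{\alpha}_{j}$, the top restriction $m_{i}\overline{\alpha}_{i}+m_{j}\overline{\alpha}_{j}$ coming from the highest root $\widetilde{\alpha}$, and at least three intermediate ones arising from roots lying strictly between them in the root-order; this is carried out group by group using the Bourbaki root-system tables for the exceptional Lie groups (where marks $\ge 3$ actually occur), and matches the entries with $q\ge 6$ in Table 3. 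Hence mark-pairs with $m_{j}\ge 3$ never produce exactly five summands. The remaining possibilities are $(m_{i},m_{j})=(1,2)$ (Type A) and $(m_{i},m_{j})=(2,2)$ (Type B), where the a~priori upper bounds on $|\Delta_{\fr{t}}^{+}|$ are $5$ and $8$ respectively. For each such pair I would compute $\kappa(\alpha)$ for every positive complementary root $\alpha$ by expressing it in simple roots, in the spirit of Example \ref{ISO2}. A direct inspection then shows that the pairs listed in Table 1 yield exactly five distinct restrictions, while the remaining Type A pairs yield only four (failure of $\overline{\alpha}_{i}+2\overline{\alpha}_{j}$ to appear in $\kappa(\Delta_{\fr{m}}^{+})$) and the remaining Type B pairs yield six, the $C_{\ell}$ family of Example \ref{ISO2} being the prototype.

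The main obstacle is precisely this exhaustive verification, which is purely combinatorial but substantial: for each simple root system $B_{\ell},C_{\ell},D_{\ell},\F_{4},\E_{6},\E_{7},\E_{8}$ and each admissible pair $\{\alpha_{i},\alpha_{j}\}$ with $(m_{i},m_{j})\in\{(1,2),(2,2)\}$, one must list the positive complementary roots, restrict them via $\kappa$, and count distinct images. In practice this is done using the $e_{i}$-basis descriptions of the classical root systems and the Bourbaki tables for the exceptional ones, and concludes the argument upon matching the outputs against Table 1; the earlier classifications of \cite{Chry2} and the literature referenced in Table 3 serve as useful consistency checks along the way.
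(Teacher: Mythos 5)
Your overall strategy coincides with the paper's: identify the summands with positive $\fr{t}$-roots via Proposition \ref{DEC}, constrain the possible restrictions by the Dynkin marks of the two painted simple roots, and then compute and count $\kappa(\Delta_{\fr{m}}^{+})$ case by case. The only organizational difference is that the paper carries out the explicit restriction computations for the Table 1 candidates and the leftover exceptional Type B pairs, and delegates the mark-pairs other than $(1,2)$ and $(2,2)$ to \cite[Prop.~5, 6]{Chry2} and Table 3, whereas you propose to verify the $\Mark\geq 3$ cases directly from the root tables; the mathematical content is the same exhaustive verification, and your acknowledgement that this is the bulk of the work is accurate.

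One concrete claim in your sketch is wrong, though. The Type A pairs not listed in Table 1 do \emph{not} drop to four summands through the failure of $\overline{\al}_{i}+2\overline{\al}_{j}$: the highest root $\widetilde{\al}$ has full support, hence is always a complementary root, and for a mark pair $(1,2)$ its restriction is precisely $\overline{\al}_{i}+2\overline{\al}_{j}$, so this element always belongs to $\Delta_{\fr{t}}^{+}$. The $\fr{t}$-root that can fail to be attained is $2\overline{\al}_{j}$ (twice the restriction of the mark-2 root). For instance, for $B_{\ell}$ with $\Pi\setminus\Pi_{0}=\{\al_{1},\al_{2}\}$ (the case $p=1$ of the Type A family) the set $\Delta^{\fr{n}}(0,2)=\{e^{2}_{i}+e^{2}_{j}:1\leq i<j\leq p\}$ is empty, and for $\Sp(\ell)/(\U(p)\times\U(\ell-p))$ in Example \ref{ISO2} the missing class is $2\overline{\al}_{p}$, while $\overline{\al}_{\ell}+2\overline{\al}_{p}=\kappa(\widetilde{\al})$ does occur. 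This slip does not derail your plan, since the plan is to compute all restrictions and count them, but the criterion you state for discarding the non-listed Type A pairs must be corrected; as written it would never be triggered.
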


\begin{proof}

 {\bf Step 1.}   We follow the notation of \cite{AA} or \cite{Chry2} (see also \cite{GOV}) for the root systems of the simple Lie algebras, their  fundamental systems of simple roots and the associated expressions of the highest root $\tilde{\al}$, 

\smallskip 
\underline{\bf Case of $B_{\ell}=\SO(2\ell+1)$ : Type A.}  
Let  $\Pi\backslash\Pi_{0}=\Pi_{\fr{n}}=\{\al_1, \al_{p+1}: 2\le p\le \ell -1\}$.  This choice corresponds to the painted Dynkin diagram 
  \begin{center} 
{\small{
\begin{picture}(154,0)(0,-2)
\put(0, 0){\circle*{4}}
\put(0,10){\makebox(0,0){$\al_1$}}
\put(2, 0){\line(1,0){14}}
\put(18, 0){\circle{4}}
\put(20, 0){\line(1,0){8}}
  \put(18,10){\makebox(0,0){$\al_2$}}
 \put(40, 0){\makebox(0,0){$\ldots$}}
 \put(50, 0){\line(1,0){8}}
\put(60, 0){\circle{4.4}}
\put(78, 10){\makebox(0,0){$\al_{p+1}$}}
\put(62, 0){\line(1,0){14}}
\put(78, 0){\circle*{4.4}}
\put(80, 0){\line(1,0){14}}
\put(96, 0){\circle{4}}
\put(98, 0){\line(1,0){8}}
\put(116, 0){\makebox(0,0){$\ldots$}}
\put(130, 0){\line(1,0){8}}
\put(140, 0){\circle{4}}
\put(140, 10){\makebox(0,0){$\al_{\ell-1}$}}
\put(142, 1){\line(1,0){14}}
\put(142, -1){\line(1,0){14}}
\put(152.7, -1.9){\scriptsize $>$}
\put(160.7, 0){\circle{4}}
\put(161, 10){\makebox(0,0){$\al_\ell$}}
\end{picture}
} } 
\end{center}
which determines the flag manifold    $M= \SO(2\ell+1)/(\U(1)\times \U(p) \times \SO(2(\ell-p-1)+1))$ with  $2\le p\le \ell -1$ and $\ell\geq 3$.
  Let $\fr{n}$ be a $B$-orthogonal complement  of the isotropy subalgebra $\fr{k}$ in $\fr{g}=\fr{so}(2\ell+1)$, that is $\fr{g}=\fr{k}\oplus\fr{n}$ with $[\fr{k}, \fr{n}]\subset\fr{n}$. We will prove that the  $\Ad(K)$-module  $\fr{n}\cong T_{o}M$  decomposes into a direct sum of five  non equivalent $\Ad(K)$- submodules $\fr{n}_{i}$
  $(i=1,\dots ,5)$ whose dimensions are given by (\ref{dim1}).   
   
   Let $\{e^1_1, e^2_{i}, \pi_{j}\}$ be an orthonormal basis of $\bb{R}^{\ell}$ with $1\leq i\leq p$ and $1\leq j\leq \ell-p-1$.  The   positive root system $\Delta^{+}$ of $\SO(2\ell+1)$ is given by (see \cite{AA})
\begin{eqnarray*}
 \Delta^{+} &=& \{e^1_1\pm e^2_{i} : 1\leq i\leq p\}\cup\{\ e^{2}_{i}\pm e^{2}_{j} : 1\leq i<j\leq p\}  \cup \{e^1_1\pm\pi_{j} : 1\leq j\leq \ell-p-1\} \\ 
 && \ \cup \ \{e^{2}_{i}\pm\pi_{j}, : 1\leq i\leq p,  \ 1\leq j\leq \ell-p-1\} 
  \cup  \{\pi_{i}\pm\pi_{j} : 1\leq i<j\leq \ell-p-1\} \\ 
  && \ \cup \ \{\ e^{1}_{1}, \ e^{2}_{i} : 1\leq i\leq p\}\cup\{\pi_{j} : 1\leq j\leq\ell-p-1\}.
 \end{eqnarray*}
 We will denote a  basis of simple roots by
   \begin{eqnarray*}
   \Pi(\fr{n}) &=&\{\al_1=e_1^1-e_2^1, \ \al_2=e^2_1-e^2_2, \ldots,  \al_{p}=e^2_{p-1}-e^{2}_{p}, \ \al_{p+1}=e^{2}_{p}-\pi_1,  \\
   && \ \ \ \ \phi_1=\pi_1-\pi_2,  \ldots, \phi_{\ell-p-2}=\pi_{\ell-p-2}-\pi_{\ell-p-1}, \ \phi_{\ell-p-1}=\pi_{\ell-p-1}\}.
   \end{eqnarray*}
 It is $\Delta_{o}^{+}=\{e^2_{i}-e^2_{j}, \ \pi_{i}\pm\pi_{j}, \ \pi_{j}\}$ and thus the positive complementary roots are given by $\Delta_{\fr{n}}^{+}=\{e^{1}_1\pm e^{2}_{i}, \ e^2_{i}+e^{2}_{j}, \ e^1_1\pm\pi_{j}, e^{2}_{i}\pm\pi_{j}, \ e^1_1, e^{2}_{i}\}$. 
   According to  (\ref{troots})   for any $\al=\sum_{k=1}^{\ell}c_{k}\al_{k}\in \Delta_{\fr{n}}^{+}$ it will be
  $\kappa(\al)=c_1\overline{\al}_1+c_{p+1}\overline{\al}_{p+1}$ with $0\leq c_1\leq 1$ and  $0\leq c_{p+1}\leq 2$.   In particular, by expressing the complementary roots in terms of  simple roots we  obtain that 
{\small{  \begin{eqnarray*}
\kappa(e^1_1-e^2_i) &=& \kappa(\al_1+\al_2+\cdots+\al_{i})=\overline{\al}_1, \\
\kappa(e^1_1+e^2_i) &=& \kappa(\al_1+\cdots+\al_{i}+2\al_{i+1}+\cdots 2\al_{p+1}+2\phi_1+\cdots+2\phi_{\ell-p-1})=\overline{\al}_1+2\overline{\al}_{p+1}, \\
\kappa(e^2_i+e^2_j) &=& \kappa(\al_{i+1}+\cdots\al_{j}+2\al_{j+1}+\cdots 2\al_{p+1}+2\phi_1+\cdots +2\phi_{\ell-p-1})= 2\overline{\al}_{p+1}, \\
\kappa(e^1_1-\pi_j) &=& \kappa(\al_1+\cdots + \al_{p+1}+\phi_1+\cdots\phi_{j-1})=\overline{\al}_1+\overline{\al}_{p+1} \\
\kappa(e^1_1+\pi_j) &=& \kappa(\al_1+\cdots+\al_{p+1}+\phi_1+\cdots+\phi_{j-1}+2\phi_{j}+2\phi_{j+1}+\cdots+2\phi_{\ell-p-1})=\overline{\al}_1+\overline{\al}_{p+1}, \\
 \kappa(e^2_i+\pi_j) &=& \kappa(\al_{i+1}+\cdots+\al_{p+1}+\phi_1+\cdots+\phi_{j-1}+2\phi_{j}+\cdots+2\phi_{\ell-p-1}) =\overline{\al}_{p+1}, \\
 \kappa(e^2_i-\pi_j) &=& \kappa(\al_{i+1}+\cdots +\al_{p+1}+\phi_1+\cdots +\phi_{j-1}) =\overline{\al}_{p+1}, \\  
\kappa(e^1_1) &=& \kappa(\al_1+\cdots +\phi_{\ell-p-1})= \overline{\al}_1+\overline{\al}_{p+1}, \\
\kappa(e^2_i) &=& \kappa(\al_{i+1}+\cdots+\al_{p+1}+\phi_1+\cdots +\phi_{\ell-p-1})=\overline{\al}_{p+1}.
\end{eqnarray*} }}
Let $\Delta(\fr{n})_{\fr{t}}^{+}$ be the associated system of positive $\fr{t}$-roots. Then it follows that
$\Delta(\fr{n})_{\fr{t}}^+=\{\overline{\al}_{1}, \ \overline{\al}_{p+1}, \ \overline{\al}_{1}+\overline{\al}_{p+1}, 
 \ 2\overline{\al}_{p+1},  \ \overline{\al}_{1}+2\overline{\al}_{p+1}\}$, and thus according to Proposition \ref{DEC}  (1)  we obtain the decomposition
 \begin{equation}\label{n}
 \fr{n} = \fr{n}_1\oplus\fr{n}_{2}\oplus\fr{n}_{3}\oplus\fr{n}_{4}\oplus\fr{n}_{5}  = \fr{n}(1, 0)\oplus\fr{n}(0, 1)\oplus\fr{n}(1, 1)\oplus\fr{n}(0, 2)\oplus\fr{n}(1, 2),
 \end{equation}
 where the submodules $\fr{n}_{i}$ are given by
\begin{equation}\label{ni}
\left.\begin{tabular}{lll}
 $\fr{n}_{1}=\fr{n}(1, 0)$ &=& $\displaystyle\sum_{\alpha \in  \Delta^{\fr{n}}(1, 0)}
 \left\{ {\mathbb R}A_{\al}+ {\mathbb R}B_{\al}\right\}=\sum_{\al\in \Delta_{\fr{n}}^+ \ :\ \kappa(\al)=\overline{\al}_{1}}\left\{
{\mathbb R} A_{\al} + {\mathbb R}B_{\al}\right\}$ \\
$\fr{n}_{2}=\fr{n}(0, 1)$ &=&  $\displaystyle\sum_{\alpha \in  \Delta^{\fr{n}}(0, 1)}
 \left\{ {\mathbb R}A_{\al}+ {\mathbb R}B_{\al}\right\}=\sum_{\al\in \Delta_{\fr{n}}^+ \ :\ \kappa(\al)=\overline{\al}_{p+1}}\left\{
{\mathbb R} A_{\al} + {\mathbb R}B_{\al}\right\}$ \\
$\fr{n}_{3}=\fr{n}(1, 1)$ &=&  $\displaystyle\sum_{\alpha \in  \Delta^{\fr{n}}(1, 1)}
 \left\{ {\mathbb R}A_{\al}+ {\mathbb R}B_{\al}\right\}=\sum_{\al\in \Delta_{\fr{n}}^+ \ :\ \kappa(\al)=\overline{\al}_{1}+\overline{\al}_{p+1}}\left\{
{\mathbb R} A_{\al} + {\mathbb R}B_{\al}\right\}$ \\
$\fr{n}_{4}=\fr{n}(0, 2)$ &=&  $\displaystyle\sum_{\alpha \in  \Delta^{\fr{n}}(0, 2)}
 \left\{ {\mathbb R}A_{\al}+ {\mathbb R}B_{\al}\right\}=\sum_{\al\in \Delta_{\fr{n}}^+ \ :\ \kappa(\al)=2\overline{\al}_{p+1}}\left\{{\mathbb R} A_{\al} + {\mathbb R}B_{\al}\right\}$ \\
$\fr{n}_{5}=\fr{n}(1, 2)$ &=& $\displaystyle \sum_{\alpha \in  \Delta^{\fr{n}}(1, 2)}
 \left\{ {\mathbb R}A_{\al}+ {\mathbb R}B_{\al}\right\}=\sum_{\al\in \Delta_{\fr{n}}^+ \ :\ \kappa(\al)=\overline{\al}_1+2\overline{\al}_{p+1}}\left\{
{\mathbb R} A_{\al} + {\mathbb R}B_{\al}\right\}$
\end{tabular}\right\}
\end{equation}
The   sets $\Delta^{\fr{n}}(j_{1}, j_{2})$  are given by
  \begin{equation}\label{D1}
   \left. \begin{tabular}{lll}
  $\Delta^{\fr{n}}(1, 0) $ &=& $\{e^1_1-e^2_i : 1\leq i\leq p\}$,\\
  $\Delta^{\fr{n}}(0, 1) $ &=& $ \{e^2_i\pm\pi_j : 1\leq i\leq p, \ 1\leq j\leq \ell-p-1\}\cup\{e^{2}_{i} : 1\leq i\leq p\}$, \\
 $\Delta^{\fr{n}}(1, 1) $ &=& $\{e^1_1\pm \pi_j, \ e^1_1 : 1\leq j\leq \ell-p-1\}$, \\
 $\Delta^{\fr{n}}(0, 2) $ &=& $\{e^2_i+e^{2}_{j} : 1\leq i < j\leq p\}$, \\
 $\Delta^{\fr{n}}(1, 2) $ &=& $\{e^1_1+e^{2}_{i} : 1\leq i\leq p\}$.
 \end{tabular}\right\}
 \end{equation}
 Thus by applying Proposition \ref{DEC} (2), we conclude that the dimensions of these submodules are given as follows: 
   \begin{equation}\label{dim1}
   \left. \begin{tabular}{lll}
   $\dim_{\bb{R}}\fr{n}_{1}$ &=& $ 2\cdot |\{\al\in\Delta_{\fr{n}}^{+} : \kappa (\al)=\overline{\al}_{1}\}| = 2\cdot |\Delta^{\fr{n}}(1, 0)|=2p$, \\
   $\dim_{\bb{R}}\fr{n}_{2}$ &=&  $2\cdot |\{\al\in\Delta_{\fr{n}}^{+} : \kappa (\al)=\overline{\al}_{p+1}\}| = 2\cdot |\Delta^{\fr{n}}(0, 1)|=2p(2\ell-2p-1)$ , \\
 $\dim_{\bb{R}}\fr{n}_{3}$ &=&  $ 2\cdot |\{\al\in\Delta_{\fr{n}}^{+} : \kappa (\al)=\overline{\al}_{1}+\overline{\al}_{p+1} \}| = 2\cdot |\Delta^{\fr{n}}(1, 1)|=2(2\ell-2p-1)$, \\ 
 $\dim_{\bb{R}}\fr{n}_{4}$ &=&  $ 2\cdot |\{\al\in\Delta_{\fr{n}}^{+} : \kappa (\al)=2\overline{\al}_{p+1} \}| = 2\cdot |\Delta^{\fr{n}}(0, 2)|=p(p-1)$,  \\
 $\dim_{\bb{R}}\fr{n}_{5}$ &=&  $2\cdot |\{\al\in\Delta_{\fr{n}}^{+} : \kappa (\al)=\overline{\al}_{1}+2\overline{\al}_{p+1} \}| = 2\cdot |\Delta^{\fr{n}}(1, 2)|=2p$. 
  \end{tabular} \right\}
 \end{equation}
 Note that for $p=\ell-1$, that is $\Pi_{\fr{n}}=\{\al_1, \al_{\ell}\}$,  we have the following painted Dynkin diagram   {\small{ \[ \begin{picture}(110,-5)(0,5)
\put(0, 0){\circle*{4}}
\put(0,8.5){\makebox(0,0){$\al_1$}}
\put(0,-8){\makebox(0,0){1}}
\put(2, 0.5){\line(1,0){14}}
\put(18, 0){\circle{4}}
\put(18,8.5){\makebox(0,0){$\al_2$}}
\put(18,-8){\makebox(0,0){2}}
\put(20, 0.5){\line(1,0){13.5}}
\put(35.5, 0){\circle {4}}
\put(35.5,-8){\makebox(0,0){2}}
 \put(37.5, 0.5){\line(1,0){13}}
\put(58.5, 0){\makebox(0,0){$\ldots$}}
\put(65, 0.5){\line(1,0){13.5}}
 \put(80.6, 0){\circle{4}}
 \put(78, 8.5){\makebox(0,0){$\al_{\ell-2}$}}
\put(80.6,-8){\makebox(0,0){2}}
   \put(82, 0.5){\line(1,0){14}}
\put(98, 0){\circle{4}}
\put(99,8.5){\makebox(0,0){$\al_{\ell-1}$}}
\put(98,-8){\makebox(0,0){2}}
\put(100, 1.1){\line(1,0){15.5}}
\put(100, -0.7){\line(1,0){15.5}}
\put(113.5, -1.9){\scriptsize $>$}
\put(120, 0){\circle*{4}}
 \put(121.8,8.5){\makebox(0,0){$\al_\ell$}}
\put(120,-8){\makebox(0,0){2}}
\end{picture}
 \]}}
 
  \noindent 
 The corresponding flag manifold   $M=\SO(2\ell+1)/\U(1)\times \U(\ell-1)$ $(\ell\geq 3)$ has also five isotropy summands.  However, note that the relation $(\al_{1}, \al_{1})=(\al_{p+1}, \al_{p+1})$ ($3\leq p\leq \ell-2$) is no longer true.  It means that for the case $p=\ell-1$ the painted black simple roots have different lengths.  
 
\smallskip 
\underline {\bf Case of $B_{\ell}=\SO(2\ell+1)$ : Type B.}    We now assume that the pair $(\Pi, \Pi_{0})$ is of Type B, that is $\Pi\backslash\Pi_{0}=\Pi_{\fr{m}}=\{\al_p, \al_{p+1} : 2\leq p\leq \ell-1\}$.  This choice corrsponds to the following painted Dynkin diagram: 
   \begin{center} 
{\small{ 
\begin{picture}(168,0)(0,-2)
\put(0, 0){\circle{4}}
\put(0,10){\makebox(0,0){$\al_1$}}
\put(2, 0){\line(1,0){14}}
\put(18, 0){\circle{4}}
\put(20, 0){\line(1,0){10}}
\put(18,10){\makebox(0,0){$\al_2$}}
\put(40, 0){\makebox(0,0){$\ldots$}}
\put(50, 0){\line(1,0){8}}
\put(60, 0){\circle{4.4}}
\put(78, 10){\makebox(0,0){$\al_p$}}
\put(62, 0){\line(1,0){14}}
\put(78, 0){\circle*{4.4}}
\put(98, 10){\makebox(0,0){$\al_{p+1}$}}
\put(80, 0){\line(1,0){14}}
\put(96, 0){\circle*{4.4}}
\put(98, 0){\line(1,0){14}}
\put(114, 0){\circle{4.4}}
\put(116, 0){\line(1,0){8}}
\put(133, 0){\makebox(0,0){$\ldots$}}
\put(142, 0){\line(1,0){8}}
\put(152, 0){\circle{4}}
\put(154, 10){\makebox(0,0){$\al_{\ell-1}$}}
\put(153.8, 1.){\line(1,0){14}}
\put(153.8, -1){\line(1,0){14}}
\put(164.5, -1.9){\scriptsize $>$}
\put(172.5, 0){\circle{4}}
\put(173, 10){\makebox(0,0){$\al_\ell$}} 
\end{picture}
} } .
\end{center} 
 which also defines the flag manifold    $M= \SO(2\ell+1)/(\U(1)\times \U(p) \times \SO(2(\ell-p-1)+1))$ with $2\leq p\leq \ell-1$  and $\ell\geq 3$.
Let $\fr{g}=\fr{k}\oplus\fr{m}$ be a reductive decomposition of $\fr{g}=\fr{so}(2\ell+1)$, with respect to $B$.    Similarly with Type A, we consider an orthonormal basis of $\bb{R}^{\ell}$ given by $\{e^1_i, e^2_{1}, \pi_{j}\}$  with $1\leq i\leq p$ and $1\leq j\leq \ell-p-1$.  Then,  a system of positive roots   is given by  
\begin{eqnarray*}
 \Delta^{+} &=& \{e^1_i\pm e^1_{j} : 1\leq i<j\leq p\}\cup\{e^{1}_{i}\pm e_{1}^{2} : 1\leq i\leq p\}\cup \{e^1_i\pm\pi_{j} : 1\leq i\leq p, \ 1\leq j\leq \ell-p-1\} \\ && \ \cup\{e_{1}^{2}\pm\pi_{j} : 1\leq j\leq\ell-p-1\}\cup \{\pi_{i}\pm\pi_{j} : 1\leq i<j\leq\ell-p-1\} \\
 && \ \cup\{e^{2}_{1},  \ e^{1}_{i} : 1\leq i\leq p\}\cup\{\pi_{j} : 1\leq j\leq \ell-p-1\}.
 \end{eqnarray*}
In this case, we denote a   base of simple roots by
   \begin{eqnarray*}
   \Pi(\fr{m}) &=&\{\al_1=e_1^1-e^1_2, \ \al_2=e^1_2-e^1_3, \ldots,  \al_{p-1}=e^1_{p-1}-e^{1}_{p}, \ \al_{p}=e^{1}_{p}-e^{2}_{1}, \ \al_{p+1}=e^{2}_{1}-\pi_{1},   \\
   && \ \ \ \  \phi_1=\pi_1-\pi_2,  \ldots, \phi_{\ell-p-2}=\pi_{\ell-p-2}-\pi_{\ell-p-1}, \ \phi_{\ell-p-1}=\pi_{\ell-p-1}\}.
   \end{eqnarray*}
The root system of the semisimple part of the isotropy subgroup $K$ is given by $\Delta_{0}^{+}=\{e^1_{i}-e^1_{j}, \ \pi_{i}\pm\pi_{j}, \ \pi_{j} : i<j\}$ and thus the positive complementary roots are  of the form
  \[
  \Delta_{\fr{m}}^{+}=\{e^{1}_i+ e^{1}_{j}, \ e^1_{i}\pm e^{2}_{1}, \ e^1_i\pm\pi_{j}, \ e^{2}_{1}\pm\pi_{j}, \ e^1_i, e^{2}_{1} : i<j\}.
  \]
 Choose  $\al=\sum_{k=1}^{\ell}c_{k}\al_{k}\in \Delta_{\fr{m}}^{+}$.  Since $\Pi_{\fr{m}}=\{\al_{p}, \al_{p+1}\}$, by applying  relation (\ref{troots})  we obtain  that   $\kappa(\al)=c_{p}\overline{\al}_{p}+c_{p+1}\overline{\al}_{p+1}\in\Delta(\fr{m})_{\fr{t}}^{+}$ where $\Delta(\fr{m})_{\fr{t}}^{+}$ is the associated system of positive $\fr{t}$-roots. Here the  coefficients $c_{p}, c_{p+1}$ are such that $0\leq c_p\leq 2$, and $0\leq c_{p+1}\leq 2$,  and they are not simultaneously equal to zero.  By expressing the complementary roots in terms of   simple roots we conclude that $\Delta(\fr{m})_{\fr{t}}^{+}=\{\overline{\al}_{p}, \overline{\al}_{p+1}, \overline{\al}_{p}+\overline{\al}_{p+1}, \overline{\al}_{p}+2 \overline{\al}_{p+1}, 2\overline{\al}_{p}+2\overline{\al}_{p+1}\}$. Indeed, it is
  \begin{eqnarray*}
\kappa(e^1_i-e^2_1) &=& \kappa(\al_i+\ldots+\al_{p})=\overline{\al}_p, \\
\kappa(e^1_i+e^2_1) &=& \kappa(\al_i+\cdots+\al_{p}+2\al_{p+1}+2\phi_{1}+\cdots +\cdots+2\phi_{\ell-p-1})=\overline{\al}_p+2\overline{\al}_{p+1}, \\
\kappa(e^1_i+e^1_j) &=& \kappa(\al_{i}+\cdots+2\al_{j}+\cdots +2\al_{p}+2\al_{p+1}+2\phi_1+\cdots +2\phi_{\ell-p-1})= 2\overline{\al}_{p}+2\overline{\al}_{p+1}, \\
\kappa(e^1_i-\pi_j) &=& \kappa(\al_i+\cdots + \al_{p}+\al_{p+1}+\phi_{1}+\cdots + \phi_j)=\overline{\al}_p+\overline{\al}_{p+1} \\
\kappa(e^1_i+\pi_j) &=& \kappa(\al_i+\cdots + \al_{p}+\al_{p+1}+\phi_{1}+\cdots +\phi_j+2\phi_{j+1}+\cdots +2\phi_{\ell-p-1})=\overline{\al}_p+\overline{\al}_{p+1}, \\
 \kappa(e^2_1-\pi_j) &=& \kappa(\al_{p+1}+\phi_{1}+\cdots+\phi_{j}) =\overline{\al}_{p+1}, \\
 \kappa(e^2_1+\pi_j) &=& \kappa(\al_{p+1}+\phi_{1}+\cdots +\phi_{j}+2\phi_{j+1}+\cdots +2\phi_{\ell-p-1}) =\overline{\al}_{p+1}, \\  
\kappa(e^2_1) &=& \kappa(\al_{p+1}+\phi_{1}+\cdots +\phi_{\ell-p-1})= \overline{\al}_{p+1}, \\
\kappa(e^1_i) &=& \kappa(\al_{i}+\cdots+\al_{p}+\al_{p+1}+\phi_1+\cdots +\phi_{\ell-p-1})=\overline{\al}_p+\overline{\al}_{p+1}.
\end{eqnarray*}
Thus, by applying Proposition \ref{DEC}  (1) we conclude that  the associated isotropy representation decomposes into a direct sum of five isotropy summands, i.e.
 \begin{equation}\label{m}
 \fr{m} = \fr{m}_1\oplus\fr{m}_{2}\oplus\fr{m}_{3}\oplus\fr{m}_{4}\oplus\fr{m}_{5}  = \fr{m}(1, 0)\oplus\fr{m}(0, 1)\oplus\fr{m}(1, 1)\oplus\fr{m}(1, 2)\oplus\fr{m}(2, 2).
 \end{equation}

 By applying Proposition \ref{DEC} (2) we obtain that
   \begin{equation}\label{dim21}
   \left. \begin{tabular}{lll}
  $\dim_{\bb{R}}\fr{m}_{1}$ &=& $2\cdot |\{\al\in\Delta_{\fr{m}}^{+} : \kappa (\al)=\overline{\al}_{p}\}| = 2\cdot |\Delta^{\fr{m}}(1, 0)|=2p$ , \\
 $\dim_{\bb{R}}\fr{m}_{2}$ &=& $ 2\cdot |\{\al\in\Delta_{\fr{m}}^{+} : \kappa (\al)=\overline{\al}_{p+1}\}| = 2\cdot |\Delta^{\fr{m}}(0, 1)|=2(2\ell-2p-1)$, \\
 $\dim_{\bb{R}}\fr{m}_{3}$ &=&  $ 2\cdot |\{\al\in\Delta_{\fr{m}}^{+} : \kappa (\al)=\overline{\al}_{p}+\overline{\al}_{p+1} \}| = 2\cdot |\Delta^{\fr{m}}(1, 1)|=2p(2\ell-2p-1)$, \\ 
 $\dim_{\bb{R}}\fr{m}_{4}$ &=&  $ 2\cdot |\{\al\in\Delta_{\fr{m}}^{+} : \kappa (\al)=\overline{\al}_{p}+2\overline{\al}_{p+1} \}| = 2\cdot |\Delta^{\fr{m}}(1, 2)|=2p$,  \\
 $\dim_{\bb{R}}\fr{m}_{5}$ &=&  $ 2\cdot |\{\al\in\Delta_{\fr{m}}^{+} : \kappa (\al)=2\overline{\al}_{p}+2\overline{\al}_{p+1} \}| = 2\cdot |\Delta^{\fr{m}}(2, 2)|=p(p-1)$. 
   \end{tabular} \right\}
 \end{equation} 
\noindent  Note that for  $p = 1$ the space  $M= \SO(2\ell+1)/(\U(1)\times \U(1) \times \SO(2(\ell- 2)+1))$  has  four isotropy summands (\cite{Chry2}).  On the other hand,  the  case   $p=\ell-1$ corresponds to the painted Dynkin diagram $\Gamma(\Pi_{\fr{m}})$ with $\Pi_{\fr{m}}=\{\al_{\ell-1}, \al_{\ell}\}$ $(\ell\geq 3)$, that is
{\small{ \[ \begin{picture}(110,-5)(0,5)
\put(0, 0){\circle{4}}
\put(0,8.5){\makebox(0,0){$\al_1$}}
\put(2, 0.5){\line(1,0){14}}
\put(18, 0){\circle{4}}
\put(18,8.5){\makebox(0,0){$\al_2$}}
\put(20, 0.5){\line(1,0){13.5}}
\put(35.5, 0){\circle {4}}
 \put(37.5, 0.5){\line(1,0){13}}
\put(58.5, 0){\makebox(0,0){$\ldots$}}
\put(65, 0.5){\line(1,0){13.5}}
 \put(80.6, 0){\circle{4}}
 \put(78, 8.5){\makebox(0,0){$\al_{\ell-2}$}}
   \put(82, 0.5){\line(1,0){14}}
\put(98, 0){\circle*{4}}
\put(99,8.5){\makebox(0,0){$\al_{\ell-1}$}}
\put(100, 1.1){\line(1,0){15.5}}
\put(100, -0.7){\line(1,0){15.5}}
\put(113.5, -1.9){\scriptsize $>$}
\put(120, 0){\circle*{4}}
 \put(121.8,8.5){\makebox(0,0){$\al_\ell$}}
\end{picture}
 \]}}
 
  \noindent
The    corresponding flag manifold $M = \SO(2\ell+1)/\U(\ell-1)\times  \U(1)$ ($\ell\geq 3$)  satisfies decomposition (\ref{m}) as well,  but in this case the painted black roots are such that $(\al_{\ell-1}, \al_{\ell-1})=2 (\al_{\ell}, \al_{\ell})$, i.e. they have different length.

\smallskip 
\underline {\bf Case of $D_{\ell}=\SO(2\ell)$ : Type A.}  Let $\Pi\backslash\Pi_{0}=\Pi_{\fr{n}}=\{\al_1, \al_{p+1} : 2 \leq p \leq \ell -3 \}$.  This choice corresponds to the  painted Dynkin diagram  
  \begin{center} 
  \vspace{10pt}
{\small{
\begin{picture}(150,0)(0,-2)
\put(0, 0){\circle*{4}}
\put(0,10){\makebox(0,0){$\al_1$}}
\put(2, 0){\line(1,0){14}}
\put(18, 0){\circle{4}}
\put(20, 0){\line(1,0){10}}
\put(18,10){\makebox(0,0){$\al_2$}}
\put(40, 0){\makebox(0,0){$\ldots$}}
\put(50, 0){\line(1,0){8}}
\put(60, 0){\circle{4}}
\put(78, 10){\makebox(0,0){$\al_{p+1}$}}
\put(62, 0){\line(1,0){14}}
\put(78, 0){\circle*{4}}
\put(80, 0){\line(1,0){14}}
\put(96, 0){\circle{4}}
\put(98, 0){\line(1,0){8}}
\put(115, 0){\makebox(0,0){$\ldots$}}
\put(125, 0){\line(1,0){8}}
\put(135, 0){\circle{4}}
\put(135, 10){\makebox(0,0){$\al_{\ell-2}$}}
\put(137, 1){\line(2,1){14}}
\put(137, -1){\line(2,-1){14}}
\put(153, -9){\circle{4}}
\put(153, 9){\circle{4}}
\put(153, 16){\makebox(0,0){\small $\al_{\ell -1}$}}
\put(153, -16){\makebox(0,0){\small $\al_{\ell }$}}
\end{picture}
} } 
\vspace{10pt}
\end{center}

 \noindent which determines the flag manifold   $M= \SO(2\ell)/(\U(1)\times \U(p) \times \SO(2(\ell-p-1)))$  with $2 \leq p \leq \ell -3 $ and $\ell\geq 5$.

Similarly with the previous cases we obtain that the positive $\fr{t}$-roots are given by 
$\Delta(\fr{n})_{\fr{t}}^+ = \{\overline{\al}_{1}, \ \overline{\al}_{p+1}, \ \overline{\al}_{1}+\overline{\al}_{p+1}, \  2 \overline{\al}_{p+1}, 
 \ \overline{\al}_{1} + 2 \overline{\al}_{p+1} \}$ and 
  according to Proposition \ref{DEC}  (1) we obtain the decomposition (\ref{n}) 
   where the submodules $\fr{n}_{i}$ are determined  by (\ref{ni}).
   
By applying Proposition \ref{DEC} (2) 
we obtain  the dimensions of  $\fr{n}_{i}$:
   \begin{equation}\label{5dimDl}
   \left. \begin{tabular}{ll}
  $\dim_{\bb{R}}\fr{n}_{1}$ &  $=2\cdot |\{\al\in\Delta_{\fr{m}}^{+} : \kappa (\al)=\overline{\al}_{1}\}| = 2 p$, \\
 $\dim_{\bb{R}}\fr{n}_{2}$ & $= 2\cdot |\{\al\in\Delta_{\fr{m}}^{+} : \kappa (\al)=\overline{\al}_{p+1}\}| = 4 p (\ell  - p-1)$, \\
 $\dim_{\bb{R}}\fr{n}_{3}$ &  $= 2\cdot |\{\al\in\Delta_{\fr{m}}^{+} : \kappa (\al)=\overline{\al}_{1}+\overline{\al}_{p+1} \}| = 4  (\ell - p-1)$, \\ 
 $\dim_{\bb{R}}\fr{n}_{4}$ &  $= 2\cdot |\{\al\in\Delta_{\fr{m}}^{+} : \kappa (\al)=2\overline{\al}_{p+1} \}| = p (p-1)$,  \\
 $\dim_{\bb{R}}\fr{n}_{5}$ &  $= 2\cdot |\{\al\in\Delta_{\fr{m}}^{+} : \kappa (\al)=\overline{\al}_{1}+2\overline{\al}_{p+1} \}| = 2 p $. 
   \end{tabular} \right\}
 \end{equation}
 
 \smallskip 
 \underline{\bf Case of $D_{\ell}=\SO(2\ell)$ : Type B.}   We now examine the pair $(\Pi, \Pi_{0})$ of Type B corresponding to $D_{\ell}$, that is  $\Pi\backslash\Pi_{0}=\Pi_{\fr{m}}=\{\al_p, \al_{p+1} : 2\leq p\leq \ell-3\}$.  This choice corresponds to the painted Dynkin diagram  
   \begin{center} 
   \vspace{10pt}
{\small{ 
\begin{picture}(170,0)(0,-2)
\put(0, 0){\circle{4}}
\put(0,10){\makebox(0,0){$\al_1$}}
\put(2, 0){\line(1,0){14}}
\put(18, 0){\circle{4}}
\put(20, 0){\line(1,0){10}}
\put(18,10){\makebox(0,0){$\al_2$}}
\put(40, 0){\makebox(0,0){$\ldots$}}
\put(50, 0){\line(1,0){8}}
\put(60, 0){\circle{4}}
\put(76, 10){\makebox(0,0){$\al_p$}}
\put(62, 0){\line(1,0){14}}
\put(76, 0){\circle*{4}}
\put(78, 0){\line(1,0){14}}
\put(94, 0){\circle*{4}}
\put(94, 10){\makebox(0,0){$\al_{p+1}$}}
\put(96, 0){\line(1,0){14}}
\put(112, 0){\circle{4}}
\put(114, 0){\line(1,0){8}}
\put(132, 0){\makebox(0,0){$\ldots$}}
\put(142, 0){\line(1,0){8}}
\put(152, 0){\circle{4}}
\put(152, 10){\makebox(0,0){$\al_{\ell-2}$}}
\put(154, 1){\line(2,1){14}}
\put(154, -1){\line(2,-1){14}}
\put(170, -9){\circle{4}}
\put(170, 9){\circle{4}}
\put(170, 16){\makebox(0,0){\small $\al_{\ell -1}$}}
\put(170, -16){\makebox(0,0){\small $\al_{\ell }$}}\end{picture}
} }.
\vspace{10pt}
\end{center} 
which  also determines the flag manifold  $M= \SO(2\ell)/(\U(1)\times \U(p) \times \SO(2(\ell-p-1)))$, with $2\leq p\leq \ell-3$ and $\ell\geq 5$. 
 It follows that $\Delta(\fr{m})_{\fr{t}}^{+}=\{\overline{\al}_{p}, \overline{\al}_{p+1}, \overline{\al}_{p}+\overline{\al}_{p+1}, \overline{\al}_{p}+2 \overline{\al}_{p+1}, 2\overline{\al}_{p}+2\overline{\al}_{p+1}\}$, 
  and by using Proposition \ref{DEC} we conclude that the dimensions of these submodules are given as follows: 
    \begin{equation}\label{dim5}
    \left. \begin{tabular}{ll}
   $\dim_{\bb{R}}\fr{m}_{1}$ &  $=2\cdot |\{\al\in\Delta_{\fr{m}}^{+} : \kappa (\al)=\overline{\al}_{p}\}| = 2 p$, \\
  $\dim_{\bb{R}}\fr{m}_{2}$ & $= 2\cdot |\{\al\in\Delta_{\fr{m}}^{+} : \kappa (\al)=\overline{\al}_{p+1}\}| = 4  (\ell  - p-1)$, \\
  $\dim_{\bb{R}}\fr{m}_{3}$ &  $= 2\cdot |\{\al\in\Delta_{\fr{m}}^{+} : \kappa (\al)=\overline{\al}_{p}+\overline{\al}_{p+1} \}| = 4 p (\ell - p-1)$, \\ 
 $\dim_{\bb{R}}\fr{m}_{4}$ &  $= 2\cdot |\{\al\in\Delta_{\fr{m}}^{+} : \kappa (\al)=\overline{\al}_{p}+2\overline{\al}_{p+1} \}| = 2 p$,  \\
  $\dim_{\bb{R}}\fr{m}_{5}$ &  $= 2\cdot |\{\al\in\Delta_{\fr{m}}^{+} : \kappa (\al)= 2\overline{\al}_{p}+2\overline{\al}_{p+1} \}| =  p (p-1) $. 
    \end{tabular} \right\}
 \end{equation}

  \smallskip 
  \underline{\bf Case of $\E_6$ : Type A.} The highest root $\tilde{\al}$ of $\E_6$ is given by $\tilde{\al}=\al_1+2\al_2+3\al_3+2\al_4+\al_5+2\al_6$.  Thus, for $\E_6$ we find  two pairs $(\Pi, \Pi_{0})$ of Type A, which determine flag manifolds with five isotropy summands, namely the choices
  $\Pi\backslash\Pi_{0}=\{\al_1, \al_4\}$ and $\Pi\backslash\Pi_{0}=\{\al_2, \al_5\}$. They  correspond to the painted  Dynkin diagrams 
  \begin{center}
 \small{
   \begin{picture}(100,45)(0,-30)
\put(0, 0){\circle*{4}}
\put(0,8.5){\makebox(0,0){$\alpha_1$}}
\put(2, 0){\line(1,0){14}}
\put(18, 0){\circle{4}}
\put(18,8.5){\makebox(0,0){$\alpha_2$}}
\put(20, 0){\line(1,0){14}}
\put(36, 0){\circle{4}}
\put(36,8.5){\makebox(0,0){$\alpha_3$}}
\put(38, 0){\line(1,0){14}}
\put(54, 0){\circle*{4}}
\put(54,8.5){\makebox(0,0){$\alpha_4$}}
\put(56, 0){\line(1,0){14}}
\put(72, 0){\circle{4}}
\put(72,8.5){\makebox(0,0){$\alpha_5$}}
\put(36, -2){\line(0,-1){14}}
\put(36, -18){\circle{4}}
\put(43,-25){\makebox(0,0){$\alpha_6$}}
\end{picture} 
}
\hspace{70pt}
\small{ 
  \begin{picture}(100,45)(0,-30) 
\put(0, 0){\circle{4}}
\put(0,8.5){\makebox(0,0){$\alpha_1$}}
\put(2, 0){\line(1,0){14}}
\put(18, 0){\circle*{4}}
\put(18,8.5){\makebox(0,0){$\alpha_2$}}
\put(20, 0){\line(1,0){14}}
\put(36, 0){\circle{4}}
\put(36,8.5){\makebox(0,0){$\alpha_3$}}
\put(38, 0){\line(1,0){14}}
\put(54, 0){\circle{4}}
\put(54,8.5){\makebox(0,0){$\alpha_4$}}
\put(56, 0){\line(1,0){14}}
\put(72, 0){\circle*{4}}
\put(72,8.5){\makebox(0,0){$\alpha_5$}}
\put(36, -2){\line(0,-1){14}}
\put(36, -18){\circle{4}}
\put(43,-25){\makebox(0,0){$\alpha_6$}}
\end{picture}  
}
\end{center}
  which both define the flag manifold $\E_6/\SU(4)\times \SU(2)\times \U(1)^{2}$.  However, there is an outer automorphism of $\E_6$\footnote{The group of outer automorphisms of a simple Lie algebra is precisely the group of graph automorphisms of the associated Dynkin diagram.  It is known that for the exceptional simple Lie algebras over $\bb{C}$, outer automorphisms exist only for $\E_6$.}   which makes these painted Dynkin diagrams equivalent (see \cite{BFR}).  Thus we will not  distinguish these two   pairs $(\Pi, \Pi_{0})$ and we will work with the first one.   
    Let $\fr{n}$ be the $B$-orthogonal complement  of the isotropy subalgebra $\fr{k}$ in $\fr{e}_6$. For the root system of $\E_6$ we use the notation of \cite{AA}, where all positive roots are given as linear combinations of the simple roots $\Pi=\{\al_1, \ldots, \al_6\}$.  The root system of the semisimple part of the isotropy subgroup $K$ is given by $\Delta_{0}^{+}=\{\al_2, \al_3, \al_5, \al_6, \al_2+\al_3, \al_3+\al_6, \al_2+\al_3+\al_6\}$,  thus 
 \begin{equation}\label{E61}
\Delta_{\fr{n}}^{+}= {\small{\left\{ 
\begin{tabular}{lll} 
$\al_1+2\al_2+3\al_3+2\al_4+\al_5+2\al_6$ & $\al_1+\al_2+\al_3+\al_4+\al_6$   & $\al_2+2\al_3+2\al_4+\al_5+\al_6$  \\
$\al_1+2\al_2+3\al_3+2\al_4+\al_5+\al_6$  & $\al_1+\al_2+\al_3+\al_6$         & $\al_2+2\al_3+\al_4+\al_5+\al_6$\\
$\al_1+2\al_2+2\al_3+2\al_4+\al_5+\al_6$  & $\al_1+\al_2+\al_3+\al_4+\al_5$   & $\al_2+2\al_3+\al_4+\al_6$\\
$\al_1+2\al_2+2\al_3+\al_4+\al_5+\al_6$   & $\al_1+\al_2+\al_3+\al_4$         & $\al_2+\al_3+\al_4+\al_5+\al_6$\\
$\al_1+2\al_2+2\al_3+\al_4+\al_6$         & $\al_3+\al_4+\al_5$               & $\al_2+\al_3+\al_4+\al_6$\\
$\al_1+\al_2+2\al_3+2\al_4+\al_5+\al_6$   & $\al_3+\al_4+\al_6$               & $\al_2+\al_3+\al_4+\al_5$\\
$\al_1+\al_2+2\al_3+\al_4+\al_5+\al_6$    & $\al_3+\al_4$                     & $\al_2+\al_3+\al_4$\\
$\al_1+\al_2+2\al_3+\al_4+\al_6$          & $\al_4+\al_5$                     & $\al_1+\al_2+\al_3$  \\
$\al_1+\al_2+\al_3+\al_4+\al_5+\al_6$     & $\al_4$                           & $\al_1+\al_2$         \\
$\al_1$                                   & $\al_3+\al_4+\al_5+\al_6$         & 
               \end{tabular} \right. }}
   \end{equation}
Let $\al=\sum_{k=1}^{6}c_{k}\al_{k}\in \Delta_{\fr{n}}^{+}$.  Since $\Pi_{\fr{n}}=\{\al_{1}, \al_{4}\}$, by applying  relation (\ref{troots})  we obtain  that   $\kappa(\al)=c_{1}\overline{\al}_{1}+c_{4}\overline{\al}_{4}$, where the  numbers $c_{1}, c_{4}$ are such that $0\leq c_1, c_4\leq 2$.  So, by using (\ref{E61}), we easily conclude that the positive  $\fr{t}$-roots are given by 
$\Delta(\fr{n})_{\fr{t}}^+ = \{\overline{\al}_{1}, \ \overline{\al}_{4}, \ \overline{\al}_{1}+\overline{\al}_{4}, 
 \ 2\overline{\al}_{4},  \ \overline{\al}_{1} + 2 \overline{\al}_{4} \},$ and 
 thus according to Proposition \ref{DEC}  (1), we obtain the decomposition (\ref{n}) where the sumbodules $\fr{n}_{i}$ are defined by (\ref{ni}). The sets  $\Delta^{\fr{n}}(j_{1}, j_{2})$ are given explicitly as follows:
 {\small{\begin{eqnarray*}
 \Delta^{\fr{n}}(1, 0) &=& \{\al_1, \al_1+\al_2, \al_1+\al_2+\al_3, \al_1+\al_2+\al_3+\al_6\}, \\
 \Delta^{\fr{n}}(0, 1) &=& \{\al_4, \al_3+\al_4, \al_4+\al_5, \al_2+\al_3+\al_4, \al_2+\al_3+\al_4+\al_5, \al_3+\al_4+\al_5,  \al_3+\al_4+\al_5+\al_6, \al_3+\al_4+\al_6, \\
 && \al_2+\al_3+\al_4+\al_6, \al_2+2\al_3+\al_4+\al_6, \al_2+\al_3+\al_4+\al_5+\al_6, \al_2+2\al_3+\al_4+\al_5+\al_6\}, \\   
 \Delta^{\fr{n}}(1, 1) &=&  \{\al_1+\al_2+\al_3+\al_4, \al_1+\al_2+\al_3+\al_4+\al_5, \al_1+\al_2+\al_3+\al_4+\al_6, 
 \al_1+\al_2+2\al_3+\al_4+\al_6,\\ 
 &&  \al_1+\al_2+\al_3+\al_4+\al_5+\al_6, \al_1+\al_2+2\al_3+\al_4+\al_5+\al_6, \al_1+2\al_2+2\al_3+\al_4+\al_5+\al_6, \\ && \al_1+2\al_2+2\al_3+\al_4+\al_6\},\\
  \Delta^{\fr{n}}(0, 2) &=& \{\al_2+2\al_3+2\al_4+\al_5+\al_6\},\\
   \Delta^{\fr{n}}(1, 2) &=& \{\al_1+\al_2+\al_3+2\al_4+\al_6, \al_1+2\al_2+3\al_3+2\al_4+\al_5+\al_6, \al_1+2\al_2+2\al_3+2\al_4+\al_5+\al_6, \\
   && \al_1+2\al_2+3\al_3+2\al_4+\al_5+2\al_6\}.
 \end{eqnarray*}}}
   By applying Proposition \ref{DEC} (2) we easily conclude that  
   \begin{equation}\label{E6dimA} 
   \left. \begin{tabular}{lll}
   $\dim_{\bb{R}}\fr{n}_{1}$ &=& $ 2\cdot |\{\al\in\Delta_{\fr{n}}^{+} : \kappa (\al)=\overline{\al}_{1}\}| = 2\cdot |\Delta^{\fr{n}}(1, 0)|= 2\cdot 4=8$, \\
   $\dim_{\bb{R}}\fr{n}_{2}$ &=&  $2\cdot |\{\al\in\Delta_{\fr{n}}^{+} : \kappa (\al)=\overline{\al}_{4}\}| = 2\cdot |\Delta^{\fr{n}}(0, 1)|= 2\cdot 12=24$, \\
 $\dim_{\bb{R}}\fr{n}_{3}$ &=&  $ 2\cdot |\{\al\in\Delta_{\fr{n}}^{+} : \kappa (\al)=\overline{\al}_{1}+\overline{\al}_{4} \}| = 2\cdot |\Delta^{\fr{n}}(1, 1)|= 2\cdot 8=16$, \\ 
 $\dim_{\bb{R}}\fr{n}_{4}$ &=&  $ 2\cdot |\{\al\in\Delta_{\fr{n}}^{+} : \kappa (\al)=2\overline{\al}_{4} \}| = 2\cdot |\Delta^{\fr{n}}(0, 2)|=2\cdot 1=2$,  \\
 $\dim_{\bb{R}}\fr{n}_{5}$ &=&  $2\cdot |\{\al\in\Delta_{\fr{n}}^{+} : \kappa (\al)=\overline{\al}_{1}+2\overline{\al}_{4} \}| = 2\cdot |\Delta^{\fr{n}}(1, 2)|= 2\cdot 4=8$. 
   \end{tabular} \right\}
 \end{equation}
 
 \smallskip 
      \underline{\bf Case of $\E_6$ : Type B.} The   flag manifold $\E_6/\SU(4)\times \SU(2)\times \U(1)^{2}$ is also defined by two pairs $(\Pi, \Pi_{0})$ of Type B, given by
  $\Pi\backslash\Pi_{0}=\{\al_4, \al_6\}$ and $\Pi\backslash\Pi_{0}=\{\al_2, \al_6\}$. They    correspond to the painted  Dynkin diagrams 
   \begin{center}
 \small{
   \begin{picture}(100,45)(0,-30)
\put(0, 0){\circle{4}}
\put(0,8.5){\makebox(0,0){$\alpha_1$}}
\put(2, 0){\line(1,0){14}}
\put(18, 0){\circle{4}}
\put(18,8.5){\makebox(0,0){$\alpha_2$}}
\put(20, 0){\line(1,0){14}}
\put(36, 0){\circle{4}}
\put(36,8.5){\makebox(0,0){$\alpha_3$}}
\put(38, 0){\line(1,0){14}}
\put(54, 0){\circle*{4}}
\put(54,8.5){\makebox(0,0){$\alpha_4$}}
\put(56, 0){\line(1,0){14}}
\put(72, 0){\circle{4}}
\put(72,8.5){\makebox(0,0){$\alpha_5$}}
\put(36, -2){\line(0,-1){14}}
\put(36, -18){\circle*{4}}
\put(43,-25){\makebox(0,0){$\alpha_6$}}
\end{picture} 
}
\hspace{70pt}
\small{ 
  \begin{picture}(100,45)(0,-30) 
\put(0, 0){\circle{4}}
\put(0,8.5){\makebox(0,0){$\alpha_1$}}
\put(2, 0){\line(1,0){14}}
\put(18, 0){\circle*{4}}
\put(18,8.5){\makebox(0,0){$\alpha_2$}}
\put(20, 0){\line(1,0){14}}
\put(36, 0){\circle{4}}
\put(36,8.5){\makebox(0,0){$\alpha_3$}}
\put(38, 0){\line(1,0){14}}
\put(54, 0){\circle{4}}
\put(54,8.5){\makebox(0,0){$\alpha_4$}}
\put(56, 0){\line(1,0){14}}
\put(72, 0){\circle{4}}
\put(72,8.5){\makebox(0,0){$\alpha_5$}}
\put(36, -2){\line(0,-1){14}}
\put(36, -18){\circle*{4}}
\put(43,-25){\makebox(0,0){$\alpha_6$}}
\end{picture}  
}
\end{center}
Note that there is also an outer automorphism of $\E_6$ which makes these painted Dynkin diagrams equivalent (\cite{BFR}), and thus we can
work    with  the first   pair $(\Pi, \Pi_{0})$ only.  
    By similar method we obtain that
   the positive $\fr{t}$-roots are   
$\Delta(\fr{m})_{\fr{t}}^+ = \{\overline{\al}_{6}, \ \overline{\al}_{4}, \ \overline{\al}_{6}+\overline{\al}_{4}, 
 \  \overline{\al}_{6} + 2 \overline{\al}_{4},  \  2 \overline{\al}_{6} + 2 \overline{\al}_{4} \}$ and thus  
  according to Proposition \ref{DEC}  (1), we obtain the decomposition (\ref{m})
  where the 
dimensions of the  submodules $\fr{m}_{i}$ are given as follows: 
      \begin{equation}\label{E6dimB}
   \left. \begin{tabular}{lll}
   $\dim_{\bb{R}}\fr{m}_{1}$ &=& $ 2\cdot |\{\al\in\Delta_{\fr{m}}^{+} : \kappa (\al)=\overline{\al}_{6}\}| = 2\cdot |\Delta^{\fr{m}}(1, 0)|= 2\cdot 4=8$, \\
   $\dim_{\bb{R}}\fr{m}_{2}$ &=&  $2\cdot |\{\al\in\Delta_{\fr{m}}^{+} : \kappa (\al)=\overline{\al}_{4}\}| = 2\cdot |\Delta^{\fr{m}}(0, 1)|= 2\cdot 8=16$, \\
 $\dim_{\bb{R}}\fr{m}_{3}$ &=&  $ 2\cdot |\{\al\in\Delta_{\fr{m}}^{+} : \kappa (\al)=\overline{\al}_{6}+\overline{\al}_{4} \}| = 2\cdot |\Delta^{\fr{m}}(1, 1)|= 2\cdot 12=24$, \\ 
 $\dim_{\bb{R}}\fr{m}_{4}$ &=&  $ 2\cdot |\{\al\in\Delta_{\fr{m}}^{+} : \kappa (\al)= \overline{\al}_{6}+ 2\overline{\al}_{4} \}| = 2\cdot |\Delta^{\fr{m}}(1, 2)|=2\cdot 4=8$,  \\
 $\dim_{\bb{R}}\fr{m}_{5}$ &=&  $2\cdot |\{\al\in\Delta_{\fr{m}}^{+} : \kappa (\al)= 2\overline{\al}_{6}+2\overline{\al}_{4} \}| = 2\cdot |\Delta^{\fr{m}}(2, 2)|= 2\cdot 1=2$. 
   \end{tabular} \right\}
 \end{equation}

\smallskip 
  \underline{\bf Case of $\E_7$ : Type A.}  Recall that the highest root $\tilde{\al}$ of $\E_7$ is given by $\tilde{\al}=\al_1+2\al_2+3\al_3+4\al_4+3\al_5+2\al_6+2\al_7$.  Consider the pair $(\Pi, \Pi_{0})$ with  $\Pi\backslash\Pi_{0}=\{\al_{1}, \al_{7} \}$.  This choise corresponds to the painted Dynkin diagram  
     \begin{center}
 \small{
   \begin{picture}(100,45)(-20,-28)
   \put(-18, 0){\circle*{4}}
\put(-18,8.5){\makebox(0,0){$\alpha_1$}}
\put(-16, 0){\line(1,0){14}}
\put(0, 0){\circle{4}}
\put(0,8.5){\makebox(0,0){$\alpha_2$}}
\put(2, 0){\line(1,0){14}}
\put(18, 0){\circle{4}}
\put(18,8.5){\makebox(0,0){$\alpha_3$}}
\put(20, 0){\line(1,0){14}}
\put(36, 0){\circle{4}}
\put(36,8.5){\makebox(0,0){$\alpha_4$}}
\put(38, 0){\line(1,0){14}}
\put(54, 0){\circle{4}}
\put(54,8.5){\makebox(0,0){$\alpha_5$}}
\put(56, 0){\line(1,0){14}}
\put(72, 0){\circle{4}}
\put(72,8.5){\makebox(0,0){$\alpha_6$}}
\put(36, -2){\line(0,-1){14}}
\put(36, -18){\circle*{4}}
\put(43,-25){\makebox(0,0){$\alpha_7$}}
\end{picture} 
}
\end{center}
which determines the flag manifold   $\E_7/\SU(6)\times \U(1)^{2}$. Let $\fr{n}$ be a $B$-ortogonal complement of $\fr{e}_{7}$.  By using  the expressions of positive roots of $\E_7$ in terms of the simple roots $\Pi=\{\al_1, \al_2, \al_3, \al_4, \al_5, \al_6, \al_7\}$ (see \cite{Fr} or \cite{PhD}) and by applying (\ref{troots}), we easily conclude that the positive $\fr{t}$-roots are given by 
$\Delta(\fr{n})_{\fr{t}}^+ = \{\overline{\al}_{1}, \ \overline{\al}_{7}, \ \overline{\al}_{1}+\overline{\al}_{7}, 
 \ 2\overline{\al}_{7},  \ \overline{\al}_{1} + 2 \overline{\al}_{7} \}$.
 Thus according to Proposition \ref{DEC}  (1) we obtain the decomposition  (\ref{n}), 
 and the  dimensions of the submodules $\fr{n}_{i}$ are given as follows:
     \begin{equation}\label{E7dimA} 
   \left. \begin{tabular}{lll}
   $\dim_{\bb{R}}\fr{n}_{1}$ &=& $ 2\cdot |\{\al\in\Delta_{\fr{n}}^{+} : \kappa (\al)=\overline{\al}_{1}\}| = 2\cdot |\Delta^{\fr{n}}(1, 0)|= 2\cdot 6$, \\
   $\dim_{\bb{R}}\fr{n}_{2}$ &=&  $2\cdot |\{\al\in\Delta_{\fr{n}}^{+} : \kappa (\al)=\overline{\al}_{7}\}| = 2\cdot |\Delta^{\fr{n}}(0, 1)|= 2\cdot 20$, \\
 $\dim_{\bb{R}}\fr{n}_{3}$ &=&  $ 2\cdot |\{\al\in\Delta_{\fr{n}}^{+} : \kappa (\al)=\overline{\al}_{1}+\overline{\al}_{7} \}| = 2\cdot |\Delta^{\fr{n}}(1, 1)|= 2\cdot 15$, \\ 
 $\dim_{\bb{R}}\fr{n}_{4}$ &=&  $ 2\cdot |\{\al\in\Delta_{\fr{n}}^{+} : \kappa (\al)=2\overline{\al}_{7} \}| = 2\cdot |\Delta^{\fr{n}}(0, 2)|=2\cdot 1$,  \\
 $\dim_{\bb{R}}\fr{n}_{5}$ &=&  $2\cdot |\{\al\in\Delta_{\fr{n}}^{+} : \kappa (\al)=\overline{\al}_{1}+2\overline{\al}_{7} \}| = 2\cdot |\Delta^{\fr{n}}(1, 2)|= 2\cdot 6$. 
   \end{tabular} \right\}
 \end{equation}
  
  \smallskip 
  \underline{\bf Case of $\E_7$ : Type B.}  The flag manifold   $\E_7/\SU(6)\times \U(1)^{2}$
  is also defined by a pair $(\Pi, \Pi_{0})$ of Type B, explicitely given by  $\Pi\backslash\Pi_{0} = \{\al_{6}, \al_{7} \}$.  It corrresponds to the painted Dynkin diagram
 \begin{center}
 \small{
    \begin{picture}(100,45)(-25,-28) 
   \put(-18, 0){\circle{4}}
\put(-18,8.5){\makebox(0,0){$\alpha_1$}}
\put(-16, 0){\line(1,0){14}}
\put(0, 0){\circle{4}}
\put(0,8.5){\makebox(0,0){$\alpha_2$}}
\put(2, 0){\line(1,0){14}}
\put(18, 0){\circle{4}}
\put(18,8.5){\makebox(0,0){$\alpha_3$}}
\put(20, 0){\line(1,0){14}}
\put(36, 0){\circle{4}}
\put(36,8.5){\makebox(0,0){$\alpha_4$}}
\put(38, 0){\line(1,0){14}}
\put(54, 0){\circle{4}}
\put(54,8.5){\makebox(0,0){$\alpha_5$}}
\put(56, 0){\line(1,0){14}}
\put(72, 0){\circle*{4}}
\put(72,8.5){\makebox(0,0){$\alpha_6$}}
\put(36, -2){\line(0,-1){14}}
\put(36, -18){\circle*{4}}
\put(43,-25){\makebox(0,0){$\alpha_7$}}
\end{picture} 
}
\end{center}
   In this case the positive $\fr{t}$-roots are given by 
$\Delta(\fr{m})_{\fr{t}}^+ = \{\overline{\al}_{6}, \ \overline{\al}_{7}, \ \overline{\al}_{6}+\overline{\al}_{7}, 
  \overline{\al}_{6} + 2 \overline{\al}_{7},  \  2 \overline{\al}_{6} + 2 \overline{\al}_{7} \}$ and 
  according to Proposition \ref{DEC}  (1), the $B$-orthogonal complement $\fr{m}$ decomposes as (\ref{m}), where the submodules $\fr{m}_{i}$ 
   have dimensions
   \begin{equation}\label{E7dimB}
   \left. \begin{tabular}{lll}
   $\dim_{\bb{R}}\fr{m}_{1}$ &=& $ 2\cdot |\{\al\in\Delta_{\fr{m}}^{+} : \kappa (\al)=\overline{\al}_{6}\}| = 2\cdot |\Delta^{\fr{m}}(1, 0)|= 2\cdot 6$, \\
   $\dim_{\bb{R}}\fr{m}_{2}$ &=&  $2\cdot |\{\al\in\Delta_{\fr{m}}^{+} : \kappa (\al)=\overline{\al}_{7}\}| = 2\cdot |\Delta^{\fr{m}}(0, 1)|= 2\cdot 15$, \\
 $\dim_{\bb{R}}\fr{m}_{3}$ &=&  $ 2\cdot |\{\al\in\Delta_{\fr{m}}^{+} : \kappa (\al)=\overline{\al}_{6}+\overline{\al}_{7} \}| = 2\cdot |\Delta^{\fr{m}}(1, 1)|= 2\cdot 20$, \\ 
 $\dim_{\bb{R}}\fr{m}_{4}$ &=&  $ 2\cdot |\{\al\in\Delta_{\fr{m}}^{+} : \kappa (\al)= \overline{\al}_{6}+ 2 \overline{\al}_{7} \}| = 2\cdot |\Delta^{\fr{m}}(1, 2)|=2\cdot 6$,  \\
 $\dim_{\bb{R}}\fr{m}_{5}$ &=&  $2\cdot |\{\al\in\Delta_{\fr{m}}^{+} : \kappa (\al)= 2\overline{\al}_{6}+2\overline{\al}_{7} \}| = 2\cdot |\Delta^{\fr{m}}(2, 2)|= 2\cdot 1$. 
   \end{tabular} \right\}
 \end{equation}
 
 {\bf Step 2.}   By using \cite[Prop. 5]{Chry2} and Step 1 of the proof we have completed the study of  all possible pairs   $(\Pi, \Pi_{0})$ of Type A.  On the other hand, and due to the form of the highest root of the classical simple Lie groups, we have also studied all possible {\it classical} flag manifolds of Types A and B (the symplectic Lie group $\Sp(\ell)$ was treated in Example \ref{ISO2}).  
 Thus we now focus  on pairs $(\Pi, \Pi_{0})$ of  Type B corresponding to exceptional Lie groups, which  define flag manifolds with
 more than five isotropy summands. Hence these are not listed in Table 1.  
 
 \smallskip 
 \underline{\bf Case of $\E_6$.}   For this Lie group there exists one more  pair $(\Pi, \Pi_{0})$ of Type B given by $\Pi\backslash\Pi_{0}=\{\al_2, \al_4\}$, which determines the flag manifold $\E_6/\SU(3)\times \SU(2)\times \SU(2)\times \U(1)^{2}$.  The isotropy representation of this space decompsoses into six isotropy summands, since we find six positive  $\fr{t}$-roots given by $\{\overline{\al}_{2}, \ \overline{\al}_{4}, \ \overline{\al}_{2}+\overline{\al}_{4}, 
 \  \overline{\al}_{2} + 2 \overline{\al}_{4},  \  2\overline{\al}_{2} + \overline{\al}_{4}, \ 2 \overline{\al}_{2} + 2 \overline{\al}_{4} \}$.
 
  \underline{\bf Case of $\E_7$.}  In this case there are two more pairs $(\Pi, \Pi_{0})$ of Type B, namely the pairs
 $\Pi\backslash\Pi_{0}=\{\al_2, \al_7\}$ and $\Pi\backslash\Pi_{0}=\{\al_2, \al_6\}$ which determine the flag manifolds
 $\E_7/\SU(5)\times \SU(2)\times \U(1)^{2}$ and $\E_7/\SO(8)\times \SU(2)\times \U(1)^{2}$ respectively.  Both of these flag manifolds have six isotropy summands, since the associated positive $\fr{t}$-roots are given by $\{\overline{\al}_{2}, \ \overline{\al}_{7}, \ \overline{\al}_{2}+\overline{\al}_{7}, 
 \  2\overline{\al}_{2} + \overline{\al}_{7},  \   \overline{\al}_{2} + 2\overline{\al}_{7}, \ 2 \overline{\al}_{2} + 2 \overline{\al}_{7} \}$  and $\{\overline{\al}_{2}, \ \overline{\al}_{6}, \ \overline{\al}_{2}+\overline{\al}_{6}, \
  2\overline{\al}_{2},  \  2\overline{\al}_{2} + \overline{\al}_{6}, \ 2 \overline{\al}_{2} + 2 \overline{\al}_{6} \}$ respectively.
  
   \underline{\bf Case of $\E_8$.}  The highest root $\tilde{\al}$ of $\E_{8}$ is given by $\tilde{\al}=2\al_1+3\al_2+4\al_3+5\al_4+6\al_5+4\al_6+2\al_7+3\al_8$.  Thus for $\E_8$ there exists only a pair  $(\Pi, \Pi_{0})$ of Type B, given by $\Pi\backslash\Pi_{0}=\{\al_1, \al_7\}$.  It determines the flag manifold $\E_8/\SO(12)\times \U(1)^{2}$ which has six isotropy summands.  Indeed, by expressing  the positive roots in terms of simple roots  (see \cite{PhD} or \cite{Fr}), and  by applying (\ref{troots}) we obtain six positive $\fr{t}$-roots, namely $\{\overline{\al}_{1}, \ \overline{\al}_{7}, \ \overline{\al}_{1}+\overline{\al}_{7}, 
 \  2\overline{\al}_{1} + \overline{\al}_{7},  \ 2\overline{\al}_{7}, \ 2 \overline{\al}_{1} + 2 \overline{\al}_{7} \}$.
 
  \underline{\bf Case of $\F_4$.} The highest root $\tilde{\al}$ of $\F_{4}$ is given by $\tilde{\al}=2\al_1+3\al_2+4\al_3+2\al_4$.  Thus, the unique pair $(\Pi, \Pi_{0})$ of Type B is given by $\Pi\backslash\Pi_{0}=\{\al_1, \al_4\}$.  It determines the flag manifold $\F_4/\SO(5)\times \U(1)^{2}$ with six isotropy summands.  Indeed, by using the expressions of positive roots in terms of simple roots (see \cite{AA}) and by applying (\ref{troots}) we obtain six positive $\fr{t}$-roots, namely $\{\overline{\al}_{1}, \ \overline{\al}_{4}, \ \overline{\al}_{1}+\overline{\al}_{4}, 
 \  2\overline{\al}_{1} + \overline{\al}_{4},  \ 2\overline{\al}_{1}, \ 2 \overline{\al}_{1} + 2 \overline{\al}_{4} \}$.  
 \end{proof}
 
The following corollary in now immediate. 
\begin{corol}\label{5summands}
The only generalized flag manifolds $M$ with $b_2(M)=2$ whose isotropy representation decomposes into five isotropy summands are those
listed in Table 1.
\end{corol}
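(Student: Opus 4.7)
The plan is essentially to assemble what has already been established in Lemma \ref{b2} and Proposition \ref{classif}, combined with the basic structure theory recalled in Section~3.

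First, I would recall that for a generalized flag manifold $M=G/K=G/C(S)$ the second Betti number equals the dimension of the center of $K$, which in turn equals the cardinality of the black nodes in the painted Dynkin diagram, i.e.\ $b_2(M)=|\Pi\setminus\Pi_0|$. Hence the hypothesis $b_2(M)=2$ is equivalent to $|\Pi\setminus\Pi_0|=2$, so we are restricted to painted Dynkin diagrams obtained by blackening exactly two simple roots $\{\alpha_i,\alpha_j\}\subset\Pi$.

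Next, I would invoke Lemma \ref{b2} in the contrapositive form: since it asserts that $b_2(M)\geq 3$ forces strictly more than five isotropy summands, the case $b_2(M)=2$ is the only remaining source of flag manifolds with exactly five isotropy summands and $b_2(M)=2$. (The case $b_2(M)=1$ is of course excluded by the hypothesis, and corresponds to the $\E_8$-space of Table~3 studied in \cite{CS}.) Then, within the class $|\Pi\setminus\Pi_0|=2$, Proposition \ref{classif} precisely identifies the painted Dynkin diagrams producing five $\fr{t}$-roots as those of Types A and B catalogued in Table~1, and conversely verifies by the explicit computation of $\Delta_{\fr{t}}^+$ in each case that these pairs do give five isotropy summands.

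Combining these two inputs, every $M$ satisfying the hypotheses of the corollary must arise from a pair $(\Pi,\Pi_0)$ of Type A or Type B in Table~1, and each such pair does define such an $M$. The only routine verification left is that the Types A and B pairs for a given classical or exceptional $G$ in Table~1 yield the same underlying real flag manifold (so that Table~2 lists them without redundancy); but this is already addressed in the discussion following Table~1 via the outer/Weyl automorphism argument, and will be substantiated in Section~4. No additional obstacle arises — the corollary is a clean bookkeeping consequence of \ref{b2} and \ref{classif}, so the short proof merely needs to cite both results and quote the identification $b_2(M)=|\Pi\setminus\Pi_0|$.
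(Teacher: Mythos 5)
Your proposal is correct and follows essentially the same route as the paper, which treats the corollary as an immediate consequence of Proposition \ref{classif} (with Lemma \ref{b2} serving only to reduce to the case $|\Pi\setminus\Pi_0|=2$, a reduction that is in fact automatic once $b_2(M)=2$ is assumed, since $b_2(M)=|\Pi\setminus\Pi_0|$). Your extra remarks on the Type A/Type B redundancy are harmless but not needed for the statement itself.
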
 

\begin{corol}\label{brackets}
Let $M=G/K$ be a flag manifold of Type A with isotropy representation 
$\fr{n}=\fr{n}_1\oplus\fr{n}_2\oplus\fr{n}_3\oplus\fr{n}_4\oplus\fr{n}_5$.  Then the $\Ad (K)$-modules 
$\fr{n}_i$ satisfy the relations
$[{\frak n}_1, {\frak n}_2] = {\frak n}_3$, $[{\frak n}_1, {\frak n}_4] = {\frak n}_5$, $[{\frak n}_2, {\frak n}_2] \subset {\frak n}_4\oplus{\frak k}$, $[{\frak n}_2, {\frak n}_3] = \fr{n}_1\oplus{\frak n}_5$, $[{\frak n}_1, {\frak n}_3] = {\frak n}_2$,
$[{\frak n}_2, {\frak n}_4] = {\frak n}_2$, $[{\frak n}_1, {\frak n}_5] = {\frak n}_4$, $[{\frak n}_4, {\frak n}_5] = {\frak n}_1$, 
 $[{\frak n}_2, {\frak n}_5] = {\frak n}_3$, and $[{\frak n}_3, {\frak n}_5] = {\frak n}_2$. 
\end{corol}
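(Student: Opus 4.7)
The plan is to derive all ten identities uniformly from the $\fr{t}$-root arithmetic established in the proof of Proposition~\ref{classif}. For a Type A flag manifold, the positive $\fr{t}$-roots are
\[
\xi_1=\overline{\al}_{i_1},\quad \xi_2=\overline{\al}_{i_2},\quad \xi_3=\xi_1+\xi_2,\quad \xi_4=2\xi_2,\quad \xi_5=\xi_1+2\xi_2,
\]
associated respectively to $\fr{n}_1,\ldots,\fr{n}_5$ via Proposition~\ref{tcor}. The structural input is the standard identity $[\fr{g}^{\bb{C}}_{\al},\fr{g}^{\bb{C}}_{\beta}]\subset \fr{g}^{\bb{C}}_{\al+\beta}$, with the conventions $\fr{g}^{\bb{C}}_0:=\fr{h}^{\bb{C}}\subset\fr{k}^{\bb{C}}$ and $\fr{g}^{\bb{C}}_{\gamma}=0$ when $\gamma\notin\Delta\cup\{0\}$. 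Applied to complementary roots and restricted via the linear map $\kappa$, this gives $[\fr{m}_\xi,\fr{m}_\eta]\subset\fr{m}_{\xi+\eta}$. Passing to the $\tau$-fixed real forms~(\ref{bas}) yields
\[
[\fr{n}_i,\fr{n}_j]\ \subset\ \bigoplus_{\{k\,:\,\xi_k\in\{\pm\xi_i\pm\xi_j\}\cap\Delta_{\fr{t}}^{+}\}}\fr{n}_k,
\]
together with an additional contribution to $\fr{k}$ exactly when $\xi_i=\xi_j$.

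First I would tabulate, for each of the ten unordered pairs $(i,j)$, the combinations $\pm\xi_i\pm\xi_j$ lying in $\pm\Delta_{\fr{t}}^{+}\cup\{0\}$. This is a one-line calculation in each case: for instance $\xi_1\pm\xi_2$ meets $\Delta_{\fr{t}}$ only at $\xi_3$, forcing $[\fr{n}_1,\fr{n}_2]\subset\fr{n}_3$; $\xi_2\pm\xi_3\in\{\xi_5,-\xi_1\}$ forces $[\fr{n}_2,\fr{n}_3]\subset\fr{n}_1\oplus\fr{n}_5$; $\xi_1-\xi_5=-\xi_4$ while $\xi_1+\xi_5=2\xi_1+2\xi_2\notin\Delta_{\fr{t}}$, giving $[\fr{n}_1,\fr{n}_5]\subset\fr{n}_4$; and similarly for the remaining seven pairs. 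These inclusions reproduce exactly the right-hand sides of the statement. The relation $[\fr{n}_2,\fr{n}_2]\subset\fr{n}_4\oplus\fr{k}$ is the only one carrying a $\fr{k}$-summand, arising from $\xi_2-\xi_2=0$ together with $2\xi_2=\xi_4$.

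Second, to upgrade each containment to an equality, I would use that $[\fr{n}_i,\fr{n}_j]\cap\fr{n}_k$ is an $\Ad(K)$-invariant subspace of the irreducible module $\fr{n}_k$, and hence is either $\{0\}$ or all of $\fr{n}_k$. To rule out the trivial case it suffices to exhibit, for each triple $(i,j,k)$ appearing, a single pair of complementary roots $\al\in\Delta^{\fr{n}}_i$ and $\beta\in\pm\Delta^{\fr{n}}_j$ with $\al+\beta\in\pm\Delta^{\fr{n}}_k$; the corresponding Weyl structure constant $N_{\al,\beta}$ is then automatically nonzero. The explicit root realizations from the proof of Proposition~\ref{classif} supply witnesses at once in the classical cases, e.g.\ $(e_1^1-e_i^2)+(e_i^2+\pi_j)=e_1^1+\pi_j$ verifies $\fr{n}_3\subset[\fr{n}_1,\fr{n}_2]$, and $(e_1^1-e_i^2)+(e_i^2+e_j^2)=e_1^1+e_j^2$ verifies $\fr{n}_5\subset[\fr{n}_1,\fr{n}_4]$; the exceptional cases are handled identically using the root list~(\ref{E61}) and its $\E_7$ analogue. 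The $\fr{k}$-component of $[\fr{n}_2,\fr{n}_2]$ is nontrivial because $[E_\al,E_{-\al}]=-H_\al\in\fr{t}\subset\fr{k}$ is nonzero for any $\al\in\Delta^{\fr{n}}(0,1)$.

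The argument is essentially mechanical: no individual step is deep, but one must verify ten separate nonvanishing assertions consistently across the four families appearing in Table~1. The only extra care needed is in the $[\fr{n}_2,\fr{n}_2]$ case, where both the $\fr{n}_4$- and the $\fr{k}$-components must be checked independently.
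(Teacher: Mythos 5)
Your proposal is correct and follows essentially the same route as the paper, whose proof is just the one-line observation that the relations are immediate from the $\fr{t}$-root systems computed in the proof of Proposition \ref{classif}; you simply spell out the details (the additivity $[\fr{m}_\xi,\fr{m}_\eta]\subset\fr{m}_{\xi+\eta}$, the case check on $\pm\xi_i\pm\xi_j$, and the irreducibility-plus-witness argument for the equalities). The extra non-vanishing verifications you include are sound and go slightly beyond what the paper records explicitly, but they do not constitute a different method.
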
 
 
 \begin{proof}
It is immediate by considering the $T$-root systems of the flag manifolds of Type A in the proof of Proposition
\ref{classif}. 
 \end{proof}
 
  \subsection{  The isometry between flag manifolds of Type A and Type B}
 By using the analysis given in the previous paragraph,  we will prove  that for  any simple Lie group $G$ appearing in Table 1, there is an isometry  which makes the corresponding  flag manifolds $G/K$ with five isotropy summands of Type A and B, isometric as real manifolds.  We will show that this isometry is obtained in a canonical way from the Weyl group $\mathcal{W}$ corresponding to (the root system of) $G$. 
 
  \begin{theorem}\label{isometry}
For any Lie group $G$ appearing in Table 1, the correpsonding pairs $(\Pi, \Pi_{0})$ of Type A and B, define isometric  flag manifolds (as real manifolds).  
 \end{theorem}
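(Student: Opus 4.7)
The plan is to exhibit, for each row of Table 1, a Weyl group element $w\in\mathcal{W}$ such that a representative $n_w\in N_G(T)$ satisfies $\Ad(n_w)(K^A)=K^B$, where $K^A,K^B$ denote the two isotropy subgroups determined by the Type A and Type B paintings. The inner automorphism $\psi_w=\Ad(n_w)$ preserves the Killing form $B$, preserves the maximal torus $T$, and permutes the root spaces by $\fr{g}_\al^{\mathbb{C}}\mapsto\fr{g}_{w(\al)}^{\mathbb{C}}$. Since $\psi_w(K^A)=K^B$, it descends to a diffeomorphism $\tilde\psi_w:G/K^A\to G/K^B$; and since $\psi_w$ carries the $B$-orthogonal reductive decomposition $\fr{g}=\fr{k}^A\oplus\fr{n}$ to $\fr{g}=\fr{k}^B\oplus\fr{m}$ while preserving $B$, it permutes the isotropy summands $\fr{n}_i\mapsto\fr{m}_{\sigma(i)}$ for some $\sigma\in S_5$. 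Consequently $\tilde\psi_w$ is an isometry between any $G$-invariant metric $g^A=\sum_{i=1}^5 x_i\,B|_{\fr{n}_i}$ on $G/K^A$ and the matched metric $g^B=\sum_{i=1}^5 x_i\,B|_{\fr{m}_{\sigma(i)}}$ on $G/K^B$, proving that the two flag manifolds are isometric as real Riemannian manifolds.

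For $G=B_\ell$ with $\Pi_0^A=\{\al_2,\ldots,\al_p,\al_{p+2},\ldots,\al_\ell\}$ and $\Pi_0^B=\{\al_1,\ldots,\al_{p-1},\al_{p+2},\ldots,\al_\ell\}$, I take $w$ to be the cyclic permutation of the standard basis vectors $\{f_1,\ldots,f_{p+1}\}$ sending $f_1\mapsto f_{p+1}$ and $f_i\mapsto f_{i-1}$ for $2\le i\le p+1$, while fixing $f_{p+2},\ldots,f_\ell$. This element, which belongs to the Weyl subgroup of type $A_p$ inside $\mathcal{W}$, satisfies $w(\al_i)=\al_{i-1}$ for $2\le i\le p$ and $w(\al_j)=\al_j$ for $j\ge p+2$, hence $w(\Pi_0^A)=\Pi_0^B$ and therefore $w(\Delta_0^A)=\Delta_0^B$. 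The same cyclic permutation works for $G=D_\ell$, since the relevant $A_p$ subsystem is identical. In both classical cases, matching the dimensions from (\ref{dim1}), (\ref{dim21}), (\ref{5dimDl}) and (\ref{dim5}) shows that the induced permutation is $\sigma=(1)(2\,3)(4\,5)$, which is also what a direct computation of the action of $w$ on the positive $\fr{t}$-roots yields.

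For the exceptional groups the argument is analogous. For $E_6$, the non-trivial outer automorphism identifies the two Type A pairs $\{\al_1,\al_4\}\sim\{\al_2,\al_5\}$ and the two Type B pairs $\{\al_4,\al_6\}\sim\{\al_2,\al_6\}$ (as recorded in Step 1 of the proof of Proposition \ref{classif}), so it is enough to connect a single Type A pair with a single Type B pair via an inner Weyl element, constructed by inspection of the explicit positive root list (\ref{E61}); for $E_7$ only one pair of each type arises, and the Weyl element is written down directly. In all exceptional cases the dimension tables (\ref{E6dimA})--(\ref{E7dimB}) again force $\sigma=(1)(2\,3)(4\,5)$. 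The main obstacle is the bookkeeping for the exceptional groups, where the root combinatorics is heavier than in the classical cases; nonetheless, because Proposition \ref{classif} has already enumerated all positive $\fr{t}$-roots and their multiplicities for both Type A and Type B paintings, the correct $\sigma$ is forced by dimension matching, and the existence of a Weyl element inducing it reduces to a finite combinatorial check on the explicit positive root lists.
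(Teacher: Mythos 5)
Your strategy coincides with the paper's: take a Weyl group element whose representative in $N_G(T)$ conjugates $K^A$ onto $K^B$, note that the induced automorphism preserves $B$ and permutes root spaces, hence carries the five Type A summands onto the five Type B summands, and conclude that matched invariant metrics are isometric. For $B_\ell$ and $D_\ell$ your cyclic element of the $A_p$ Weyl subgroup ($f_1\mapsto f_{p+1}$, $f_i\mapsto f_{i-1}$ for $2\le i\le p+1$) does satisfy $w(\Pi_0^A)=\Pi_0^B$, and a direct check shows it sends $\Delta^{\fr{n}}(1,0)\mapsto-\Delta^{\fr{m}}(1,0)$, interchanges $\Delta^{\fr{n}}(0,1)\leftrightarrow\Delta^{\fr{m}}(1,1)$ and $\Delta^{\fr{n}}(0,2)\mapsto\Delta^{\fr{m}}(2,2)$, $\Delta^{\fr{n}}(1,2)\mapsto\Delta^{\fr{m}}(1,2)$ — exactly the correspondence the paper obtains from the longest element $w_0(\al_i)=-\al_{p+1-i}$ of the same $A_p$ subsystem, so the difference in the chosen element is cosmetic. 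Two caveats. First, the permutation $\sigma$ is not ``forced by dimension matching'': in Type A one has $\dim\fr{n}_1=\dim\fr{n}_5=2p$ (similarly $8=8$ for $\E_6$ and $12=12$ for $\E_7$), so more than one dimension-compatible bijection exists; this is harmless, since $\sigma$ is read off from the Weyl action itself, but the claim should be dropped. Second, and this is the only substantive incompleteness, for $\E_6$ and $\E_7$ you assert the existence of the required Weyl element and defer it to ``a finite combinatorial check'' without carrying it out. The paper closes precisely this point by writing the elements down: $w_0=s^{}_{\al_2+\al_3}\,s^{}_{\al_1+\al_2+\al_3+\al_6}$ for $\E_6$ and $w_0=s^{}_{\al_3+\al_4}\,s^{}_{\al_2+\al_3+\al_4+\al_5}\,s^{}_{\al_1+\al_2+\al_3+\al_4+\al_5+\al_6}$ for $\E_7$, which are again longest elements of the $A_4$, respectively $A_6$, subdiagrams common to the two paintings. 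Observing that the same mechanism (an element of the Weyl group of the common $A$-type subdiagram, your cyclic element or the longest one) works uniformly in all rows of Table 1 would complete your argument with essentially no additional work.
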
 
 
 \begin{proof} 
 At first we consider a Dynkin diagram of type $A_p$ :  \ \ 
\small{
\begin{picture}(84,0)(0,-2)
\put(0, 0){\circle{4}}
\put(0,10){\makebox(0,0){$\al_1$}}
\put(2, 0){\line(1,0){14}}
\put(18, 0){\circle{4}}
\put(20, 0){\line(1,0){10}}
\put(18,10){\makebox(0,0){$\al_2$}}
\put(40, 0){\makebox(0,0){$\ldots$}}
\put(50, 0){\line(1,0){10}}
\put(62, 0){\circle{4}}
\put(62, 10){\makebox(0,0){$\al_{p-1}$}} 
\put(64, 0){\line(1,0){14}}
\put(80, 0){\circle{4}}
\put(80, 10){\makebox(0,0){$\al_{p}$}} 
\end{picture}.
}
Then there exists an element $w_0$ of the Weyl group 
 $\mathcal{W}$ of $\SU(p+1)$ with $ w_0(\al_{i}) = - \al_{p+1-i}$ for $i = 1, \dots, p$. In fact, $w_0$ is given by 
 \begin{center} $ w_0 = s^{}_{\al^{}_{k+1}} \cdot  s^{}_{\al^{}_k+ \al^{}_{k+1}+  \al^{}_{k+2}} \cdots \  s^{}_{\al^{}_2+\cdots +  \al^{}_{k+1}+\cdots+  \al^{}_{p-1}}\cdot   s^{}_{\al^{}_1+\cdots +  \al^{}_{k+1}+\cdots+  \al^{}_{p}}$ for $p = 2 k +1$ 
 \end{center}
 and
\begin{center} $ w_0 = s^{}_{\al^{}_{k}+\al^{}_{k+1}} \cdot s^{}_{\al^{}_k+ \al^{}_{k+1}+  \al^{}_{k+2}} \cdots \ s^{}_{\al^{}_2+\cdots +  \al^{}_{k+1}+\cdots+  \al^{}_{p-1}}\cdot   s^{}_{\al^{}_1+\cdots +  \al^{}_{k+1}+\cdots+  \al^{}_{p}}$ for $p = 2 k $,  
 \end{center}
 where  $s^{}_{\be}$ is the reflection defined by a root $\be$. 
 
 For a moment we write $\Pi_{0}(A)$ and $\Pi_{0}(B)$ for $\Pi_{0}$ of Type A and $\Pi_{0}$ of Type B  and also write $\Delta_{0}(A)$ and  $\Delta_{0}(B)$ for $\Delta_{0}$ of Type A and $\Delta_{0}$ of Type B respectively. 
 Now for  $B_{\ell}$ and $D_\ell$ we see that  the  pair $(\Pi, \Pi_{0})$ is given by  $\Pi_{\fr{n}}=\Pi\backslash\Pi_{0}=\{\al_1, \al_{p+1}\}$ (Type A) and  $\Pi\backslash\Pi_{0}= \Pi_{\fr{m}}=\{\al_{p}, \al_{p+1}\}$ (Type B) and thus  
the painted Dynkin diagram of Type A and Type B contain a Dynkin subdiagram of type $A_p$, where $ p = 2, \dots, \ell-1$ for   $B_{\ell}$ and  $ p = 2, \dots, \ell-3$ for   $D_{\ell}$.  We regard  $w_0$ as an element of the Weyl group 
of type $B_{\ell}$ and $D_{\ell}$. 
 Then we have   $w_0(\al_{p+1}) = \al_{p+1} + ( \al_1 + \cdots + \al_{p})$ and $w_0(\al_{p+k}) = \al_{p+k}$ for $ k =2, \dots, \ell -p$, 
and  it follows  that $w_{0}\big(\Delta_{0}(A)\big) = \Delta_{0}(B)$ and 
   \[
 \begin{tabular}{lll}
 $w_{0}\big(\Delta^{\fr{n}}(1, 0)\big) = -\Delta^{\fr{m}}(1, 0),$ & 
 $w_{0}\big(\Delta^{\fr{n}}(0, 1)\big) = \Delta^{\fr{m}}(1, 1),$ &
 $w_{0}\big(\Delta^{\fr{n}}(1, 1)\big) = \Delta^{\fr{m}}(0, 1),$ \\
 & &  \\
 $w_{0}\big(\Delta^{\fr{n}}(0, 2)\big) = \Delta^{\fr{m}}(2, 2),$ &
 $w_{0}\big(\Delta^{\fr{n}}(1, 2)\big) = \Delta^{\fr{m}}(1, 2).$ & 
 \end{tabular}  \]
 
 \medskip
 
 For $\E_6$  the  pair $(\Pi, \Pi_{0})$ is given by  $\Pi_{\fr{n}}=\Pi\backslash\Pi_{0}=\{\al_1, \al_{4}\}$ (Type A) and  $\Pi\backslash\Pi_{0}= \Pi_{\fr{m}}=\{\al_{4}, \al_{6}\}$ (Type B) and thus the painted Dynkin diagrams of Type A and Type B contain a Dynkin subdiagram of  type $A_4$ :  \ \ 
 
 \medskip
\small{
\begin{picture}(56,0)(0,-2)
\put(0, 0){\circle{4}}
\put(0,7){\makebox(0,0){$\al_1$}}
\put(2, 0){\line(1,0){14}}
\put(18, 0){\circle{4}}
\put(20, 0){\line(1,0){14}}
\put(18,7){\makebox(0,0){$\al_2$}}
\put(36, 0){\circle{4}}
\put(36, 7){\makebox(0,0){$\al_{3}$}} 
\put(38, 0){\line(1,0){14}}
\put(54, 0){\circle{4}}
\put(54, 7){\makebox(0,0){$\al_{6}$}} 
\end{picture} . 
}
Let $w_0$ be the element of the Weyl group 
 $\mathcal{W}$ of $\SU(5)$ given by $ w_0 = s^{}_{\al^{}_{2}+\al^{}_{3}} \cdot s^{}_{\al^{}_1+ \al^{}_{2}+  \al^{}_{3}+ \al^{}_{6}}$.  We regard  $w_0$ as an element of the Weyl group 
of type $\E_{6}$. Then we have that $w_0(\al_{1}) = -\al_{6}$,  $w_0(\al_{2}) = -\al_{3}$, $w_0(\al_{3}) = -\al_{2}$,  $w_0(\al_{6}) = -\al_{1}$, $w_0(\al_{4}) = \al_{1}+2 \al_{2}+2\al_{3}+\al_{4}+\al_{6}$ and $w_0(\al_{5}) = \al_{5}$. 
  Thus we get $w_{0}\big(\Delta_{0}(A)\big) = \Delta_{0}(B)$ and 
   \[
 \begin{tabular}{lll}
 $w_{0}\big(\Delta^{\fr{n}}(1, 0)\big) = -\Delta^{\fr{m}}(1, 0),$ & 
 $w_{0}\big(\Delta^{\fr{n}}(0, 1)\big) = \Delta^{\fr{m}}(1, 1),$ &
 $w_{0}\big(\Delta^{\fr{n}}(1, 1)\big) = \Delta^{\fr{m}}(0, 1),$ \\
 & &  \\
 $w_{0}\big(\Delta^{\fr{n}}(0, 2)\big) = \Delta^{\fr{m}}(2, 2),$ &
 $w_{0}\big(\Delta^{\fr{n}}(1, 2)\big) = \Delta^{\fr{m}}(1, 2).$ & 
 \end{tabular}  \]
 
  \medskip
 
 For $\E_7$ we see that  the  pair $(\Pi, \Pi_{0})$ is given by  $\Pi_{\fr{n}}=\Pi\backslash\Pi_{0}=\{\al_1, \al_{7}\}$ (Type A) and  $\Pi\backslash\Pi_{0}= \Pi_{\fr{m}}=\{\al_{6}, \al_{7}\}$ (Type B) and thus the painted Dynkin diagrams of Type A and Type B contain a Dynkin subdiagram of  type $A_6$ :  \ \ 
 
 \medskip
\small{
\begin{picture}(92,0)(0,-2)
\put(0, 0){\circle{4}}
\put(0,7){\makebox(0,0){$\al_1$}}
\put(2, 0){\line(1,0){14}}
\put(18, 0){\circle{4}}
\put(20, 0){\line(1,0){14}}
\put(18,7){\makebox(0,0){$\al_2$}}
\put(36, 0){\circle{4}}
\put(36, 7){\makebox(0,0){$\al_{3}$}} 
\put(38, 0){\line(1,0){14}}
\put(54, 0){\circle{4}}
\put(54, 7){\makebox(0,0){$\al_{4}$}} 
\put(56, 0){\line(1,0){14}}
\put(72, 0){\circle{4}}
\put(72, 7){\makebox(0,0){$\al_{5}$}}
\put(74, 0){\line(1,0){14}}
\put(90, 0){\circle{4}}
\put(90, 7){\makebox(0,0){$\al_{6}$}}
\end{picture} .
}
Let $w_0$ be the element of the Weyl group 
 $\mathcal{W}$ of $\SU(7)$ given by $ w_0 = s^{}_{\al^{}_{3}+\al^{}_{4} }\cdot s^{}_{\al^{}_2+ \al^{}_{3}+  \al^{}_{4}+ \al^{}_{5}} \cdot s^{}_{\al^{}_1+ \al^{}_{2}+  \al^{}_{3}+ \al^{}_{4}+  \al^{}_{5}+ \al^{}_{6}}$.  We regard  $w_0$ as an element of the Weyl group 
of type $E_{7}$. Then we have that $w_0(\al_{1}) = -\al_{6}$,  $w_0(\al_{2}) = -\al_{5}$,  $w_0(\al_{3}) = -\al_{4}$,  $w_0(\al_{4}) = -\al_{3}$, $w_0(\al_{5}) = -\al_{2}$, $w_0(\al_{6}) = -\al_{1}$,  $w_0(\al_{7}) = \al_{1}+2 \al_{2}+3\al_{3}+3\al_{4}+2\al_{5}+\al_{6}+\al_{7}$. 
  TIt follows that  $w_{0}\big(\Delta_{0}(A)\big) = \Delta_{0}(B)$ and 
   \[
 \begin{tabular}{lll}
 $w_{0}\big(\Delta^{\fr{n}}(1, 0)\big) = -\Delta^{\fr{m}}(1, 0),$ & 
 $w_{0}\big(\Delta^{\fr{n}}(0, 1)\big) = \Delta^{\fr{m}}(1, 1),$ &
 $w_{0}\big(\Delta^{\fr{n}}(1, 1)\big) = \Delta^{\fr{m}}(0, 1),$ \\
 & &  \\
 $w_{0}\big(\Delta^{\fr{n}}(0, 2)\big) = \Delta^{\fr{m}}(2, 2),$ &
 $w_{0}\big(\Delta^{\fr{n}}(1, 2)\big) = \Delta^{\fr{m}}(1, 2).$ & 
 \end{tabular}  \]
Hence we get  an isometry between the corresponding tangent spaces $\fr{n}$ (Type A) and $\fr{m}$ (Type B)  and, therefore  we obtain an isometry between flag manifolds of Type A and  Type B.

\end{proof}

  \section{
           K\"ahler--Einstein metrics}
 In computing the Ricci tensor for a generalized flag manifold $M=G/K$ by using Riemannian submersions we will use the well known
 fact that $M$ admits a finite number of  $G$-invariant K\"ahler--Einstein metrics.
 Recall that 
 if $M=G/K$ is determined by
  a pair $(\Pi, \Pi_{K})$ with reductive decomposition  
  $\fr{g}=\fr{k}\oplus\fr{m}$, then
  $G$-invariant  complex structures are in 
one-to-one  
correspondence with invariant orderings $\Delta_{\fr{m}}^{+}$ in $\Delta_{\fr{m}}$ (\cite{Ale}, \cite[p.~625]{Bo}).
 Put $ \displaystyle Z_{\frak t} = \Bigg\{\Lambda \in {\frak t} \ \Big\vert \ \frac{2(\Lambda, \ \alpha) }{( \alpha, \  \alpha)}  \in { \mathbb Z} \  \mbox{ for }  \mbox{ each } \alpha \in \Delta \Bigg\}$. 
Then  $ Z_{\frak t} $   is a lattice of  ${\frak t} $ generated by  the fundamental weights $\{ \Lambda^{}_{i_1}, \cdots, \Lambda^{}_{i_r}\}$.  
Set $ Z_{\frak t}^{+} = \left\{ \lambda \in Z_{\frak t} \ \big\vert \  (\lambda, \alpha) > 0 \ \  \mbox{for} \  \alpha \in \Pi\setminus \Pi^{}_0 \right\}$. Then we have $\displaystyle Z_{\frak t}^{+} =  \sum_{\alpha \in  \Pi\setminus \Pi^{}_0} {\mathbb Z}^+ \Lambda_{\alpha}$ and 
 define the element   
 $\displaystyle  \delta_{\frak m} = \frac{1}{2} \sum_{\alpha \in \Delta_{\frak m}^{+} } \alpha \ \in  \sqrt{-1}\frak h$. 
 Put $ \displaystyle k_\alpha =  \frac{2 ( 2 \delta_{\frak m}, \alpha)}{(\alpha, \alpha)}$ for $ \alpha \in  \Pi \setminus \Pi^{}_0$.  Then 
 $\displaystyle 2 \delta_{\frak m} =   \sum_{ \alpha  \in  \Pi \setminus \Pi^{}_0}  k_\alpha \Lambda_\alpha
 = k_{\alpha_{i_1}}\Lambda_{\alpha_{i_1}} +\cdots + k_{\alpha_{i_r}}\Lambda_{\alpha_{i_r}} $ and each $k_{\alpha_{i_s}}$ is a positive integer. 
 We have the following:
 
 \begin{prop}
The $G$-invariant metric $g_{ 2 \delta_{\frak m} }^{}$ on $G/K$ corresponding to $ 2 \delta_{\frak m} $ is a K\"ahler Einstein metric which is given by 
$$ g_{ 2 \delta_{\frak m} }^{} =  \sum_{j_1, \cdots, j_r} \left( \sum_{\ell = 1}^{r}
k_{\alpha_{i_{\ell}}}  j_{\ell} \frac{(\alpha_{j_\ell},  \alpha_{j_\ell})}{ 2} \right) B|_{ {\frak m}( j_1, \cdots,  j_r )}.  
$$
\end{prop}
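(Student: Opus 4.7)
The plan is to exploit the classical Koszul theory for generalized flag manifolds, which assigns to each $\Lambda\in Z_{\fr{t}}^{+}$ a $G$-invariant K\"ahler metric $g_\Lambda$ on $G/K$, and then single out the element $\Lambda=2\delta_{\fr{m}}$ by means of the Ricci form identity. First I would recall that, relative to the invariant complex structure $J$ determined by the chosen ordering $\Delta^{+}_{\fr{m}}$, the formula $\omega_\Lambda(X,Y)=B(\Lambda,[X,Y])$ descends to a closed $(1,1)$-form on $G/K$, positive precisely when $\Lambda\in Z_{\fr{t}}^{+}$; the associated symmetric tensor
\[
g_\Lambda(X,Y)=\omega_\Lambda(X,JY),\qquad X,Y\in\fr{m},
\]
is then a $G$-invariant K\"ahler metric.

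Second, to identify $g_\Lambda$ as a multiple of $B$ on each irreducible summand $(\fr{m}_\xi\oplus\fr{m}_{-\xi})^\tau$, I would apply Schur's lemma to the two $\Ad(K)$-invariant bilinear forms $g_\Lambda|_{(\fr{m}_\xi\oplus\fr{m}_{-\xi})^\tau}$ and $B|_{(\fr{m}_\xi\oplus\fr{m}_{-\xi})^\tau}$. Testing on a root vector $E_\al$ with $\kappa(\al)=\xi$ and using $[\Lambda,E_\al]=(\Lambda,\al)E_\al$ (with $\Lambda$ identified as an element of $\fr{h}_0$ via $B$), one obtains
\[
g_\Lambda|_{(\fr{m}_\xi\oplus\fr{m}_{-\xi})^\tau}=(\Lambda,\xi)\,B|_{(\fr{m}_\xi\oplus\fr{m}_{-\xi})^\tau},
\]
the value $(\Lambda,\xi)$ being unambiguous since $\Lambda\in\fr{t}$ annihilates $\Pi_{0}$ and therefore $(\Lambda,\al)=(\Lambda,\kappa(\al))$ depends only on $\xi$.

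Third, the distinguished role of $2\delta_{\fr{m}}$ comes from the Ricci form identity $\rho_{g_\Lambda}=\omega_{2\delta_{\fr{m}}}$ for $G$-invariant K\"ahler metrics on flag manifolds, which holds independently of the choice of $\Lambda\in Z_{\fr{t}}^{+}$. This immediately gives $\Ric(g_{2\delta_{\fr{m}}})=g_{2\delta_{\fr{m}}}$, so $g_{2\delta_{\fr{m}}}$ is K\"ahler--Einstein. Finally, writing $2\delta_{\fr{m}}=\sum_{s=1}^{r}k_{\al_{i_s}}\Lambda_{\al_{i_s}}$ and, for the summand $\fr{m}(j_1,\dots,j_r)=\fr{m}_\xi$, expressing the associated positive $\fr{t}$-root as $\xi=\sum_{\ell=1}^{r}j_\ell\,\overline{\al}_{i_\ell}$, I would invoke the orthogonality relations $(\Lambda_{\al_{i_s}},\al_{i_\ell})=\delta_{s\ell}(\al_{i_\ell},\al_{i_\ell})/2$ between fundamental weights and simple roots to compute
\[
(2\delta_{\fr{m}},\xi)=\sum_{\ell=1}^{r}k_{\al_{i_\ell}}\,j_\ell\,\frac{(\al_{i_\ell},\al_{i_\ell})}{2},
\]
which is precisely the coefficient of $B|_{\fr{m}(j_1,\dots,j_r)}$ asserted in the statement.

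The main obstacle is justifying the Ricci form identity $\rho_{g_\Lambda}=\omega_{2\delta_{\fr{m}}}$. This classical fact reduces to the assertion that the character by which $K$ acts on the top exterior power of $\fr{m}^{1,0}$ corresponds to $2\delta_{\fr{m}}$, equivalently that $c_1(G/K)$ is represented by the Kirillov--Kostant form associated to $2\delta_{\fr{m}}$. I would either quote this from Besse's monograph or derive it directly from the standard formula for the Ricci form as $-\sqrt{-1}\,\partial\bar\partial\log\det(g_\Lambda)$ applied in an invariant unitary frame of $\fr{m}^{1,0}$ and the explicit expression of $\delta_{\fr{m}}$ in terms of the fundamental weights. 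All remaining steps are essentially linear algebra on the root system and an application of Schur's lemma.
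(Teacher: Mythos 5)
Your proposal is correct and follows exactly the classical route the paper itself relies on: the paper states this proposition without proof, as a standard consequence of the Kirillov--Kostant/Koszul theory (the Ricci form of any invariant K\"ahler metric being the Koszul form associated to $2\delta_{\fr{m}}$), and your reconstruction via $\omega_\Lambda(X,Y)=B(\Lambda,[X,Y])$, Schur's lemma on each summand $(\fr{m}_\xi\oplus\fr{m}_{-\xi})^{\tau}$, and the pairing $(2\delta_{\fr{m}},\xi)=\sum_{\ell}k_{\alpha_{i_\ell}}j_\ell(\alpha_{i_\ell},\alpha_{i_\ell})/2$ is precisely that argument. The only points needing care are the normalization ($\sqrt{-1}$ factors in relating $\omega_\Lambda$, $J$ and $B$ on the compact real form, so that the coefficient comes out exactly as $(\Lambda,\kappa(\alpha))$), which you gloss over but which are routine.
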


\smallskip
\begin{example} 
 \underline{\bf Case of $B_{\ell}= \SO(2\ell+1)$ : Type A.}  Let  $\Pi\backslash\Pi_{0}=\Pi_{\fr{n}}=\{\al_1, \al_{p+1}: 2 \le p \le \ell -1\}$. For the flag manifold    $M= \SO(2\ell+1)/(\U(1)\times \U(p) \times \SO(2(\ell-p-1)+1))$ with  $2\le p\le \ell -1$ and $\ell\geq 3$,  we see that    $\displaystyle 2 \delta_{\frak n}  =  (p+1) \Lambda_{\alpha_{1}}  + (2  \ell -p -2 )\Lambda_{\alpha_{p+1}} $.  
Thus the K\"ahler Einstein metric $g_{ 2 \delta_{\frak n} }^{}$ on $G/K$  is given by 
\begin{eqnarray*}
 g_{ 2 \delta_{\frak n} }^{} =  
(p+1) B|_{ {\frak n}( 1, 0 )} + (2 \ell - p -2) B|_{ {\frak n}( 0, 1 )}  
+ (2 \ell - 1) B|_{ {\frak n}( 1,  1)} \\ + 2 (2 \ell - p -2) B|_{ {\frak n}( 0, 2 )}   
+(4 \ell - p -3) B|_{ {\frak n}( 1, 2 )}. 
\end{eqnarray*}
\end{example}

\begin{example} 
 \underline{\bf Case of $D_{\ell}= \SO(2\ell)$ : Type A.}  Let  $\Pi\backslash\Pi_{0}=\Pi_{\fr{n}}=\{\al_1, \al_{p+1}: 2 \le p \le \ell -3\}$. For the flag manifold    $M= \SO(2\ell)/(\U(1)\times \U(p) \times \SO(2(\ell-p-1)))$ with  $2\le p\le \ell -3$ and $\ell\geq 5$,  we see that    $\displaystyle 2 \delta_{\frak n}  =  (p+1) \Lambda_{\alpha_{1}}  + (2  \ell -p - 3 )\Lambda_{\alpha_{p+1}} $.  
Thus the K\"ahler Einstein metric $g_{ 2 \delta_{\frak n} }^{}$ on $G/K$  is given by 
\begin{eqnarray*}
 g_{ 2 \delta_{\frak n} }^{} =  
(p+1) B|_{ {\frak n}( 1, 0 )} + (2 \ell - p - 3) B|_{ {\frak n}( 0, 1 )}  
+ (2 \ell - 2) B|_{ {\frak n}( 1,  1)} \\ + 2 (2 \ell - p -3) B|_{ {\frak n}( 0, 2 )}   
+(4 \ell - p - 5) B|_{ {\frak n}( 1, 2 )}. 
\end{eqnarray*}
\end{example}

\begin{example} 
 \underline{\bf Case of $\E_{6}$ : Type A.}  Let  $\Pi\backslash\Pi_{0}=\Pi_{\fr{n}}=\{\al_1, \al_{4} \}$. For the flag manifold    $M= \E_{6}/(\U(4)\times \U(2) )$  we see that    $\displaystyle 2 \delta_{\frak n}  =  5 \Lambda_{\alpha_{1}}  + 7 \Lambda_{\alpha_{4}} $.  
Thus the K\"ahler Einstein metric $g_{ 2 \delta_{\frak n} }^{}$ on $G/K$  is given by 
\begin{eqnarray*}
 g_{ 2 \delta_{\frak n} }^{} =  
5 B|_{ {\frak n}( 1, 0 )} + 7 B|_{ {\frak n}( 0, 1 )}  
+ 12 B|_{ {\frak n}( 1,  1)}  +14 B|_{ {\frak n}( 0, 2 )}   
+19 B|_{ {\frak n}( 1, 2 )}. 
\end{eqnarray*}
\end{example} 
  
 \begin{example} 
 \underline{\bf Case of $\E_{6}$  : Type B.}  Let  $\Pi\backslash\Pi_{0}=\Pi_{\fr{m}}=\{\al_6, \al_{4} \}$. For the flag manifold  $M= \E_{6}/(\U(4)\times \U(2) )$  we see that    $\displaystyle 2 \delta_{\frak m}  =  5  \Lambda_{\alpha_{6}}  + 6 \Lambda_{\alpha_{4}} $.  
Thus the K\"ahler Einstein metric $g_{ 2 \delta_{\frak m} }^{}$ on $G/K$  is given by 
\begin{eqnarray*}
  g_{ 2 \delta_{\frak m} }^{} =  
5 B|_{ {\frak m}( 1, 0 )} + 6 B|_{ {\frak m}( 0, 1 )}  
+ 11 B|_{ {\frak m}( 1,  1)}  + 17 B|_{ {\frak m}( 1, 2 )}   
+ 22 B|_{ {\frak m}( 2, 2 )}. 
\end{eqnarray*}
\end{example} 

\begin{example} 
 \underline{\bf Case of $\E_{7}$ : Type A.}  Let  $\Pi\backslash\Pi_{0}=\Pi_{\fr{n}}=\{\al_1, \al_{7} \}$. For the flag manifold    $M= \E_{7}/(\U(1)\times \U(6) )$  we see that    $\displaystyle 2 \delta_{\frak n}  =  7 \Lambda_{\alpha_{1}}  + 11 \Lambda_{\alpha_{7}} $.  
Thus the K\"ahler Einstein metric $g_{ 2 \delta_{\frak n} }^{}$ on $G/K$  is given by 
\begin{eqnarray*}
 g_{ 2 \delta_{\frak n} }^{} =  
7 B|_{ {\frak n}( 1, 0 )} + 11 B|_{ {\frak n}( 0, 1 )}  
+18 B|_{ {\frak n}( 1,  1)}  + 2 2 B|_{ {\frak n}( 0, 2 )}   
+29 B|_{ {\frak n}( 1, 2 )}. 
\end{eqnarray*}
\end{example} 
  
 \begin{example} 
 \underline{\bf Case of $\E_{7}$ : Type B.}   Let  $\Pi\backslash\Pi_{0}=\Pi_{\fr{n}}=\{\al_6, \al_{7} \}$. For the flag manifold    $M= \E_{7}/(\U(1)\times \U(6) )$  we see that    $\displaystyle 2 \delta_{\frak m}  =  7 \Lambda_{\alpha_{6}}  + 10 \Lambda_{\alpha_{7}} $.  
Thus the K\"ahler Einstein metric $g_{ 2 \delta_{\frak m} }^{}$ on $G/K$  is given by 
\begin{eqnarray*}
 g_{ 2 \delta_{\frak m} }^{} =  
7 B|_{ {\frak m}( 1, 0 )} + 10 B|_{ {\frak m}( 0, 1 )}  
+ 17 B|_{ {\frak m}( 1,  1)}  + 27 B|_{ {\frak m}( 1, 2 )}   
+ 34 B|_{ {\frak m}( 2, 2 )}.  
\end{eqnarray*}
\end{example} 

%
%

%

  \section{The Ricci tensor on flag manifolds with five isotropy summands }
We now proceed to the calculation of the Ricci tensor $r$ corresponding to a $G$-invariant metric (\ref{eq2}) on $G/K$ of Type A.  
  In order to apply  Lemma \ref{ric2}   we  first need to find  the non zero structure constants  $\displaystyle\genfrac{[}{]}{0pt}{}{k}{ij}$ of $G/K$. 
    Due to the bracket relations in Corollary \ref{brackets} we obtain that the non zero structure constant are
$$
\displaystyle \genfrac{[}{]}{0pt}{}{3}{12}, \ 
\genfrac{[}{]}{0pt}{}{4}{22}, \ 
\genfrac{[}{]}{0pt}{}{5}{23}, \ \genfrac{[}{]}{0pt}{}{5}{14}. 
$$
We write  $G$-invariant metrics $g$ on $G/K$  as  
\begin{eqnarray}
 g  = x_{1 }  B|_{ {\frak n}_1} + x_{2}  B|_{ {\frak n}_2} +x_{3 }  B|_{ {\frak n}_3} + x_{4 }  B|_{ {\frak n}_4} +x_{5}  B|_{ {\frak n}_5}   \label{eqAA}
\end{eqnarray}
where $ x_{j}$ ($ j =1, \dots, 5$) are   positive numbers. 

Put $d_i = \dim {\frak n}_i$ for $ i = 1, \dots, 5$.   From Lemma \ref{ric2} we obtain the following proposition. 
 \begin{prop}\label{componentsII}   The  components $r_i$  {\em(}$ i =1, \dots, 5${\em)} of the Ricci tensor
  for a   $G$-invariant Riemannian metric {\em(\ref{eqAA})}  on $G/K$ are given as follows: 
   \begin{equation}\label{ricci_5comp} 
   \left. \begin{tabular}{l}
   $ \displaystyle r_1   =  \frac{1}{2x_1} + \frac{1}{2d_1}{3 \brack {12}}\Big( \frac{x_1}{x_2 x_3}- \frac{x_2}{x_1 x_3}- \frac{x_3}{x_1 x_2}\Big)  +  \frac{1}{2d_1} {5 \brack {14}}\Big( \frac{x_1}{x_4 x_5}- \frac{x_5}{x_1 x_4}- \frac{x_4}{x_1 x_5}\Big),$ 
\\ \\
 $ r_2 \displaystyle  =\frac{1}{2x_2}+ \frac{1}{2d_2}{3 \brack {12}} \Big( \frac{x_2}{x_1x_3}- \frac{x_1}{x_2x_3}- \frac{x_3}{x_1x_2}\Big)  -\frac{1}{2d_2}{4 \brack {22}}\frac{x_4}{{x_2}^2} +  \frac{1}{2d_2}{5 \brack {23}} \Big( \frac{x_2}{x_3 x_5}- \frac{x_5}{x_2 x_3} - \frac{x_3}{x_2 x_5}\Big),$ \\ \\
  $ r_3 \displaystyle  =\frac{1}{2x_3}+  \frac{1}{2d_3}{3 \brack {12}} \Big( \frac{x_3}{x_1x_2}- \frac{x_2}{x_1x_3} - \frac{x_1}{x_2x_3}\Big)  +  \frac{1}{2d_3}{5 \brack {23}} \Big( \frac{x_3}{x_2 x_5}- \frac{x_5}{x_2 x_3}- \frac{x_2}{x_3 x_5}\Big),$ 
  \\
 \\
 $ r_4 \displaystyle  = \frac{1}{2x_4}+  \frac{1}{2d_4} {5 \brack {14}}\Big( \frac{x_4}{x_1x_5}- \frac{x_5}{x_1x_4}- \frac{x_1}{x_4 x_5}\Big) + \frac{1}{4 d_4} {4 \brack {22}} \Big(- \frac{2}{x_4} +  \frac{x_4}{{x_2}^2} \Big),$ 
 \\ 
\\
   $ r_5 \displaystyle  = \frac{1}{2x_5} + \frac{1}{2d_5}{5 \brack {23}}\Big( \frac{x_5}{x_2 x_3}- \frac{x_2}{x_3 x_5}- \frac{x_3}{x_2 x_5}\Big)  +  \frac{1}{2d_5} {5 \brack {14}}\Big( \frac{x_5}{x_1 x_4}- \frac{x_1}{x_4 x_5}- \frac{x_4}{x_1 x_5}\Big).$  
   \end{tabular} \right\}
 \end{equation}
\end{prop}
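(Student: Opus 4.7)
The plan is to apply Lemma \ref{ric2} componentwise, using the list of non-zero structure constants $\displaystyle {3 \brack {12}},\ {4 \brack {22}},\ {5 \brack {23}},\ {5 \brack {14}}$ extracted from Corollary \ref{brackets}, together with the symmetry ${k \brack {ij}}={k \brack {ji}}={j \brack {ki}}$ recalled in Section 2. For each $k\in\{1,\ldots,5\}$ the task reduces to identifying exactly which ordered pairs $(j,i)$ survive in the two double sums of the Ricci tensor formula, and then collecting the resulting monomials.

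For the generic cases $k=1,2,3,5$, only the two basic structure constants whose symbol involves the index $k$ contribute; for instance when $k=1$ these are ${3 \brack {12}}$ and ${5 \brack {14}}$. In the first sum the surviving pairs come in the two ordered versions $(j,i)$ and $(i,j)$ of each underlying unordered pair, yielding a factor $2$ that combines with $\tfrac{1}{4d_k}$ to produce $\tfrac{1}{2d_k}$ times the positive term $\tfrac{x_k}{x_j x_i}$. In the second sum, after rewriting ${j \brack {ki}}={k \brack {ji}}$ via the second symmetry, the same two unordered pairs contribute, but now the two ordered versions yield two distinct negative monomials $-\tfrac{x_j}{x_k x_i}$ and $-\tfrac{x_i}{x_k x_j}$. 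Assembling these gives the characteristic triple $\bigl(\tfrac{x_k}{x_j x_i}-\tfrac{x_j}{x_k x_i}-\tfrac{x_i}{x_k x_j}\bigr)$ appearing for each of $r_1, r_2, r_3, r_5$ in the proposition.

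The exceptional case is $k=4$, where only ${4 \brack {22}}$ contributes. Here the diagonal pair $(j,i)=(2,2)$ is unique rather than doubled, so the first sum gives $\tfrac{1}{4d_4}\tfrac{x_4}{x_2^{2}}{4 \brack {22}}$, while the second sum, again via the symmetry, contributes $-\tfrac{1}{2d_4}\tfrac{1}{x_4}{4 \brack {22}}$. These combine into the advertised factor $\tfrac{1}{4d_4}\bigl(-\tfrac{2}{x_4}+\tfrac{x_4}{x_2^{2}}\bigr){4 \brack {22}}$. The only point requiring vigilance throughout is the combinatorial bookkeeping of ordered versus unordered pairs, and in particular the multiplicity-one treatment of this diagonal pair; beyond that there are no analytic difficulties, and the five identities follow by direct substitution.
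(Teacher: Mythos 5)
Your overall route is exactly the paper's: the proposition is obtained by substituting the four non-zero structure constants coming from Corollary \ref{brackets} into the formula of Lemma \ref{ric2} and collecting terms, so there is no methodological difference. The problem is that your bookkeeping --- which you yourself single out as the only delicate point --- goes wrong in precisely the two components whose shape deviates from the ``characteristic triple'' pattern. First, for $k=2$ it is not true that only two basic constants contribute: the constant ${4 \brack {22}}$, with index multiset $\{2,2,4\}$, also meets the index $2$, so the ordered pairs $(j,i)=(2,4)$ and $(4,2)$ survive in both sums of Lemma \ref{ric2}. In the first sum they give $\frac{1}{2d_2}{4 \brack {22}}\frac{1}{x_4}$, and in the second sum they give $-\frac{1}{2d_2}{4 \brack {22}}\bigl(\frac{1}{x_4}+\frac{x_4}{x_2^{2}}\bigr)$; the $1/x_4$ terms cancel and what remains is exactly the term $-\frac{1}{2d_2}{4 \brack {22}}\frac{x_4}{x_2^{2}}$ of the proposition, which is not of the triple form and which your recipe, as written, never produces. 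Second, for $k=4$ it is false that only ${4 \brack {22}}$ contributes: the multiset $\{1,4,5\}$ contains $4$, so the pairs $(1,5)$ and $(5,1)$ contribute exactly as in your generic case and yield the whole bracket $\frac{1}{2d_4}{5 \brack {14}}\bigl(\frac{x_4}{x_1x_5}-\frac{x_5}{x_1x_4}-\frac{x_1}{x_4x_5}\bigr)$, which is present in the stated $r_4$ but absent from your derivation. (Your treatment of the diagonal pair $(2,2)$ for $r_4$, with its multiplicity one in the first sum, is correct, as are the cases $k=1,3,5$.)

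Because of these two omissions, following your recipe literally would produce formulas for $r_2$ and $r_4$ different from the ones to be proved, so the argument has a genuine gap --- though an easily repaired one: for each $k$ you must run over \emph{all} basic constants whose index multiset contains $k$ (two for $k=1,3,5$, three for $k=2$, two for $k=4$), view the remaining two indices as the lower pair, and then apply your ordered-versus-unordered counting, noting that when $k$ itself reappears in the lower pair (as in ${4 \brack {22}}$ seen from $k=2$) a cancellation between the two sums collapses the triple to the single term $-\frac{x_4}{x_2^{2}}$.
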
  
\smallskip

Let ${\frak k}$ be the subalgebra of ${\frak g}$ corresponding to the Lie subgroup $K$. 
We consider a subspace ${\frak l} = {\frak k} \oplus {\frak n}_1$ of ${\frak g}$. Then ${\frak l} $ is a subalgebra of ${\frak g}$ and  
we have a natural  fibration  
  $\pi :  G/K \to G/L$  with fiber $L/K$. 
We decompose ${\frak p} = {\frak p}_1 \oplus {\frak p}_2$ and ${\frak q} = {\frak q}_1$, 
where $ {\frak p}_1 = {\frak n}_2 \oplus {\frak n}_3 =  {\frak m}_{1,1} \oplus  {\frak m}_{1,2}$, $ {\frak p}_2 = {\frak n}_4 \oplus {\frak n}_5 =  {\frak m}_{2,1}\oplus  {\frak m}_{2,2} $ and $ {\frak q}_1 = {\frak n}_1 $. 
 We consider a $G$-invariant metric on $G/K$ defined by a  Riemannian submersion  $\pi : ( G/K, \,  g )  \to  ( G/L, \, \check{g} )$ given by 
\begin{eqnarray}
g  =  y_1   B|_{\mbox{\footnotesize $ \frak p$}_1} +  
 y_{2}   B|_{\mbox{\footnotesize$ \frak p$}_2} + z_1 B|_{\mbox{\footnotesize$ \frak q$}_1} \label{5comp_eq4}
 \end{eqnarray} 
 and the metric $\check{g} $ on $ G/L$ 
 $$  \check{g}   = y_1   B|_{\mbox{\footnotesize$ \frak p$}_1} +  
 y_{2}   B|_{\mbox{\footnotesize$ \frak p$}_{2}}  $$ 
for positive real numbers $ y_1,  y_{2}, z_1$. Note that, when we write the metric (\ref{5comp_eq4}) as in the form (\ref{eqAA}), we have 
\begin{eqnarray}
 g  =  y_{1}  B|_{ {\frak n}_2} +y_{1 }  B|_{ {\frak n}_3} + y_{2}  B|_{ {\frak n}_4} +  y_{2}  B|_{ {\frak n}_5} +z_{1 }  B|_{ {\frak n}_1}.   \label{eqsubmer}
\end{eqnarray}

From (\ref{ricci_5comp}) we obtain the components $r_i$  of the Ricci tensor
  for the  metric {(\ref{eqsubmer})}  on $G/K$ as follows:
 $$ 
  r_3=\frac{1}{2 {y_1}} - \frac{1}{2 {d_3}}{3 \brack {12}}\frac{{z_1}}{
   {y_1}^2}- \frac{1}{2 {d_3}} {5 \brack {23}}\frac{  {y_2}}{  {y_1}^2}, $$ 
 $$ r_4  =  \frac{1}{2 y_2}-\frac{1}{2 {d_4}} {5 \brack {14}}\frac{{z_1}}{{y_2}^2}+ \frac{1}{4 {d_4}}{4 \brack {22}}
   \left(\frac{y_2}{{y_1}^2} - \frac{2}{{y_2}}\right). 
   $$

We put $ {\tilde d}_1= \dim{\frak p}_1 $ and $  {\tilde d}_2 = \dim {\frak p}_2$.  Then $ {\tilde d}_1 = d_2 + d_3$ and $ {\tilde d}_2 = d_4 + d_5$.     By Lemma \ref{ric2} (cf. also \cite[p. 10]{ACS})   the components $\check{ r}_{1}, \check{r}_{2}$  of Ricci tensor $\check{r}$ of  a $G$-invariant metric $  \check{g}   = y_1   B|_{\mbox{\footnotesize$ \frak p$}_1} +  
 y_{2}   B|_{\mbox{\footnotesize$ \frak p$}_{2}}  $  are given by 
\begin{equation}
\left\{
\begin{array}{ll} 
\check{r}_1 &=  \displaystyle{\frac{1}{2 y_1} -
\frac{y_2}{2\, \tilde{ d}_1\,{y_1}^2}  \left[{2 \brack 11}\right]
\; }
\\ & \\
\check{r}_2 &=  \displaystyle{\frac{1}{2 y_2}  - \frac{1}{2\,  \tilde{d}_2 \, y_2}  \left[{2 \brack 11}\right]
 +
\frac{y_2}{4\,  \tilde{d}_2\,{y_1}^2} \left[{2 \brack 11}\right]
, }
\end{array}
\right.
\end{equation}
where $\displaystyle \left[{2 \brack 11}\right] = \frac{\tilde{d}_1  \tilde{d}_2}{\tilde{d}_1 + 4 \tilde{d}_2}$.

Note that,  in the notation of  Lemma \ref{submersion_ricci}, we have that $r _{(1, 1)}  = r_2$, $r _{(1, 2)}  = r_3$, $r _{(2, 1)}  = r_4$ and $r _{(2, 2)}  = r_5$. 
From Lemma \ref{submersion_ricci} we see that    the horizontal part of  $r _{(1, 2)} ( = r_3 )$ equals to   $\check{r}_1 $ and the horizontal part of  $r _{(2, 1)} ( = r_4 ) $ equals to    $\check{r}_2 $, and thus we get 
\begin{eqnarray}
   {5 \brack {23}} = d_3 \frac{1}{\tilde{d}_1}\left[{2 \brack 11}\right] = \frac{d_3 (d_4+d_5)}{ (d_2+d_3)+ 4 ( d_4+d_5)},  \quad   {4 \brack {22}} =  d_4 \frac{1}{\tilde{d}_2}\left[{2 \brack 11}\right] = \frac{d_4 (d_2+d_3)}{ (d_2+d_3)+ 4 ( d_4+d_5)}.   \label{c224c235}
\end{eqnarray}

We determine  the structure constants $\displaystyle\genfrac{[}{]}{0pt}{}{k}{ij}$ in each case. 

\smallskip

\underline{\bf Case of $B_{\ell}=\SO(2\ell+1)$ : Type A.} 

 In this case  $ G= \SO(2\ell+1)$, $K=\U(1)\times \U(p) \times \SO(2(\ell-p-1)+1)$,
 $L=\U(p+1)\times\SO(2(\ell -p-1)+1)$ and we have  $ d_1= 2 p, d_2 = 2 p ( 2 \ell  -2 p -1), d_3= 2 ( 2 \ell  -2 p -1), d_4= p( p-1), d_5= 2 p$. Thus, from (\ref{c224c235}),  we see that 
 $$\ {5 \brack {23}} =\frac{(2 \ell -2 p -1) p}{2 \ell -1}, \quad  
  {4 \brack {22}} =\frac{(2 \ell -2 p -1) p (p-1)}{2 \ell -1}.$$ 
 
 Since  
 the K\"ahler Einstein metric $g_{ 2 \delta_{\frak n} }^{}$ on $G/K$  is given by 
\begin{eqnarray*}
 g_{ 2 \delta_{\frak n} }^{} =  
(p+1) B|_{ {\frak n}_1} + (2 \ell - p -2) B|_{ {\frak n}_2}  
+ (2 \ell - 1) B|_{ {\frak n}_3}  + 2 (2 \ell - p -2) B|_{ {\frak n}_4}   
+(4 \ell - p -3) B|_{ {\frak n}_5},  
\end{eqnarray*}
we  substitute the values $ x_1= p+1$, $x_2 = 2 \ell - p -2$, $x_3= 2 \ell - 1$, $x_4 =2 (2 \ell - p -2)$, $x_5=4 \ell - p -3$ into (\ref{ricci_5comp}). 
Consider the components $r_2$, $r_3$, $r_4$ and  $r_5$ of the Ricci tensor for these values. Then, from $r_2 - r_3 = 0$ and $r_4 - r_5 = 0$,  we obtain 
$$\ {3 \brack {12}} =\frac{(2 \ell -2 p -1) p}{2 \ell -1}, \quad  
  {5 \brack {14}} =\frac{ p (p-1)}{2 \ell -1}.$$ 
 
\underline{\bf Case of $D_{\ell}=\SO(2\ell)$ : Type A.} 

 In this case  $ G= \SO(2\ell)$, $K=\U(1)\times \U(p) \times \SO(2(\ell-p-1))$,
 $L=\U(p+1)\times\SO(2(\ell -p-1))$  and we have  $ d_1= 2 p, d_2 = 4 p (  \ell  -p -1), d_3= 4 ( \ell  -  p -1), d_4= p( p-1), d_5= 2 p$. Thus, from (\ref{c224c235}),  we see that 
 $$\ {5 \brack {23}} =\frac{( \ell - p -1) p}{ \ell -1}, \quad  
  {4 \brack {22}} =\frac{( \ell - p -1) p (p-1)}{  \ell -1}.$$ 
 
 Since  
 the K\"ahler Einstein metric $g_{ 2 \delta_{\frak n} }^{}$ on $G/K$  is given by 
\begin{eqnarray*}
 g_{ 2 \delta_{\frak n} }^{} =  
(p+1) B|_{ {\frak n}_1} + (2 \ell - p -3) B|_{ {\frak n}_2}  
+ (2 \ell - 2) B|_{ {\frak n}_3}  + 2 (2 \ell - p -3) B|_{ {\frak n}_4}   
+(4 \ell - p -5) B|_{ {\frak n}_5},  
\end{eqnarray*}
we  substitute the values $ x_1= p+1$, $x_2 = 2 \ell - p -3$, $x_3= 2 \ell - 2$, $x_4 =2 (2 \ell - p -3)$, $x_5=4 \ell - p -5$ into (\ref{ricci_5comp}). 
Consider the components $r_2$, $r_3$, $r_4$ and  $r_5$ of the Ricci tensor for these values. Then, from $r_2 - r_3 = 0$ and $r_4 - r_5 = 0$,  we obtain 
$$\ {3 \brack {12}} =\frac{( \ell - p -1) p}{ \ell -1}, \quad  
  {5 \brack {14}} =\frac{ p (p-1)}{2( \ell -1)}.$$ 
  
  \smallskip
  Note that we can put the cases of  $B_{\ell}$  and $D_{\ell}$ together. Consider $ G= \SO(m)$ and $K=\U(1)\times \U(p) \times \SO(m - 2(p+1))$. Then we have  $ d_1= 2 p, d_2 = 2 p ( m  -2(p +1)), d_3= 2 ( m  -  2(p+1)), d_4= p( p-1), d_5= 2 p$ thus it follows that 
    $$\ {5 \brack {23}} =\frac{( m - 2(p +1)) p}{m -2}, \quad  
  {4 \brack {22}} =\frac{(m - 2(p +1)) p (p-1)}{ m-2}$$ 
and 
$$\ {3 \brack {12}} =\frac{(m - 2(p +1) ) p}{m-2}, \quad  
  {5 \brack {14}} =\frac{ p (p-1)}{m-2}.$$

  \underline{\bf Case of $\E_{6}$ : Type A.} 
  
  In this case  $ G= \E_{6}$, $K=\U(1)\times \U(1)\times \SU(2) \times \SU(4)$,
  $L=\U(5)\times \SU(2)$ and we have  $ d_1= 8 , d_2 = 24, d_3= 16, d_4= 2, d_5= 8$. Thus, from (\ref{c224c235}),  we see that 
 $$\ {5 \brack {23}} =2, \quad  
  {4 \brack {22}} = 1.$$ 
 
 Since  
 the K\"ahler Einstein metric $g_{ 2 \delta_{\frak n} }^{}$ on $G/K$  is given by 
\begin{eqnarray*}
 g_{ 2 \delta_{\frak n} }^{} =  
5 B|_{ {\frak n}_1} + 7 B|_{ {\frak n}_2}  
+ 12 B|_{ {\frak n}_3}  + 14 B|_{ {\frak n}_4}   
+ 19 B|_{ {\frak n}_5},  
\end{eqnarray*}
we  substitute the values $ x_1= 5$, $x_2 = 7$, $x_3= 12$, $x_4 = 14$, $x_5= 19$ into (\ref{ricci_5comp}). 
Consider the components $r_2$, $r_3$, $r_4$ and  $r_5$ of the Ricci tensor for these values. Then, from $r_2 - r_3 = 0$ and $r_4 - r_5 = 0$,  we obtain 
$$\ {3 \brack {12}} =2, \quad  
  {5 \brack {14}} =\frac{ 1}{3}.$$ 
  
   \underline{\bf Case of $\E_{7}$ : Type A.}  
  
  In this case  $ G= \E_{7}$, $K=\U(1)\times \U(1) \times \SU(6)$, $L=\U(7)$ and we have  $ d_1= 12 , d_2 = 40, d_3= 30, d_4= 2, d_5= 12$. Thus, from (\ref{c224c235}),  we see that 
 $$\ {5 \brack {23}} =\frac {10} {3}, \quad  
  {4 \brack {22}} = \frac {10} {9}.$$ 
 
 Since  
 the K\"ahler Einstein metric $g_{ 2 \delta_{\frak n} }^{}$ on $G/K$  is given by 
\begin{eqnarray*}
 g_{ 2 \delta_{\frak n} }^{} =  
7 B|_{ {\frak n}_1} + 11 B|_{ {\frak n}_2}  
+18 B|_{ {\frak n}_3}  + 22 B|_{ {\frak n}_4}   
+29 B|_{ {\frak n}_5},  
\end{eqnarray*}
we  substitute the values $ x_1= 7$, $x_2 = 11$, $x_3= 18$, $x_4 =22$, $x_5=29$ into (\ref{ricci_5comp}). 
Consider the components $r_2$, $r_3$, $r_4$ and  $r_5$ of the Ricci tensor for these values. Then, from $r_2 - r_3 = 0$ and $r_4 - r_5 = 0$,  we obtain 
$$\ {3 \brack {12}} =\frac{10}{ 3}, \quad  
  {5 \brack {14}} =\frac{1}{3}.$$ 
  
%
%
%
%
%
%
%
%

  \section{Einstein metrics on flag manifolds with five isotropy summnads } 
  We consider the system of equations:  
  \begin{eqnarray}
   r_1 = r_5,\ \   r_2 = r_3,\ \   r_3=r_4,\ \   r_4=r_5. \label{einstein_equations} 
 \end{eqnarray} 

   \underline{\bf Case of $\E_{6}$ : Type A.} 
   
  The  components $r_i$  ($ i =1, \dots, 5$) of the Ricci tensor
  for a   $G$-invariant Riemannian metric { (\ref{eqAA})}  on $G/K$ are  now given as follows: 
   \begin{equation}\label{ricci_5compE6} 
   \left. \begin{tabular}{l}
   $ \displaystyle r_1   =  \frac{1}{2 x_1} + \frac{1}{8} \Big( \frac{x_1}{x_2 x_3}- \frac{x_2}{x_1 x_3}- \frac{x_3}{x_1 x_2}\Big)  +  \frac{1}{48} \Big( \frac{x_1}{x_4 x_5}- \frac{x_5}{x_1 x_4}- \frac{x_4}{x_1 x_5}\Big),$ 
\\ \\
 $ r_2 \displaystyle  =\frac{1}{2 x_2}+ \frac{1}{24} \Big( \frac{x_2}{x_1 x_3}- \frac{x_1}{x_2 x_3}- \frac{x_3}{x_1x_2}\Big)  -\frac{1}{48}\frac{x_4}{{x_2}^2} +  \frac{1}{24} \Big( \frac{x_2}{x_3 x_5}- \frac{x_5}{x_2 x_3} - \frac{x_3}{x_2 x_5}\Big),$ \\ \\
  $ r_3 \displaystyle  =\frac{1}{2x_3}+  \frac{1}{16} \Big( \frac{x_3}{x_1x_2}- \frac{x_2}{x_1x_3} - \frac{x_1}{x_2x_3}\Big)  +  \frac{1}{16} \Big( \frac{x_3}{x_2 x_5}- \frac{x_5}{x_2 x_3}- \frac{x_2}{x_3 x_5}\Big),$ 
  \\
 \\
 $ r_4 \displaystyle  = \frac{1}{2 x_4}+  \frac{1}{12} \Big( \frac{x_4}{x_1x_5}- \frac{x_5}{x_1x_4}- \frac{x_1}{x_4 x_5}\Big) + \frac{1}{8}  \Big(- \frac{2}{x_4} +  \frac{x_4}{{x_2}^2} \Big),$ 
 \\ 
\\
   $ r_5 \displaystyle  = \frac{1}{2x_5} + \frac{1}{8}\Big( \frac{x_5}{x_2 x_3}- \frac{x_2}{x_3 x_5}- \frac{x_3}{x_2 x_5}\Big)  +  \frac{1}{48} \Big( \frac{x_5}{x_1 x_4}- \frac{x_1}{x_4 x_5}- \frac{x_4}{x_1 x_5}\Big).$  
   \end{tabular} \right\}
 \end{equation}
  From $r_1-r_5= 0$, we see that 
  $$({x_1}-{x_5}) \left({x_1} {x_2}
   {x_3}+3 {x_1} {x_4} {x_5}+3
   {x_2}^2 {x_4}-12 {x_2} {x_3}
   {x_4}+{x_2} {x_3} {x_5}+3
   {x_3}^2 {x_4}\right) =0.$$ 
   
   {\bf Case of $x_5 =x_1$.}  We normalize our equations by setting $x_1 =1$. 
   We see that the system of 
   equations (\ref{einstein_equations}) reduces to the following system of polynomial equations: 
\begin{equation} \label{equ_E6_x1=1=x5}
\left.
\begin{array}{l} 
f_1= 10 {x_2}^3+{x_2}^2 {x_3} {x_4}-24
   {x_2}^2 {x_3}+2 {x_2} {x_3}^2+24 {x_2}
   {x_3}-10 {x_2}-{x_3} {x_4}=0,\\
  f_2=  10 {x_2}^3-24
  {x_2}^2-10 {x_2} {x_3}^2+24 {x_2} {x_3}+2
   {x_2}-{x_3} {x_4}=0,\\
   f_3= 3 {x_2}^3 {x_4}+2
   {x_2}^2 {x_3} {x_4}^2+2 {x_2}^2 {x_3}-12
   {x_2}^2 {x_4}-3 {x_2} {x_3}^2 {x_4}+3
   {x_2} {x_4}+3 {x_3} {x_4}^2=0
 \end{array}  
    \right\} 
    \end{equation}  
    To find non zero solutions of equations (\ref{equ_E6_x1=1=x5})
    we consider a polynomial ring $R_1= {\mathbb Q}[y, x_2, x_3, x_4] $ and an ideal $I_1$ generated by 
$$\{ f_1, \,f_2,  \,f_3, \,y \, x_2  x_3  x_4  -1\}. 
$$
We take a lexicographic order $>$  with $ y > x_2 >  x_3 > x_4$ for a monomial ordering on $R_1$. Then 
a  Gr\"obner basis for the ideal $I_1$ contains the following polynomials: 
    \begin{equation*} \label{equ_E6_x5 =1_groebner}
\begin{array}{lll}  
& & h_1 =   512683897\, {x_4}^{12}-26586224544 \,
   {x_4}^{11}+613729012600\, {x_4}^{10}-8672203136256\,
   {x_4}^9 \\ & &   +79425819414800\, {x_4}^8 -364553102019072 \,
   {x_4}^7+901989582472192\, {x_4}^6 \\ & &-1275600747577344 \,
   {x_4}^5 +1046901453080576\,  {x_4}^4   -491806714331136 \,
   {x_4}^3 \\ & &+129330076549120 \, {x_4}^2 -17647691366400\, 
   {x_4}+969515008000 , \\ & & 
  h_2 = 114848188839160612119624999242582277039963322780084212652611305472000 \, 
   {x_3}  \\ & &-752320404408788199702048033270865700909380495228817968360883312339 \, 
   {x_4}^{11}  \\ & &+38758220515867322791999260031297235508730394711323449754223470870360 \, 
   {x_4}^{10}+\cdots  \\ & & -70726659216761168399944465106568342085848237958573454324691582101708800 \,
   {x_4}  \\ & & +4794499893690636543924823161512975441415943523673600438772484784128000 , \\ & & 
  h_3= 86136141629370459089718749431936707779972492085063159489458479104000 \, 
   {x_2}  \\ & &+523691563864872091386883937890449783253444913328748812397494319729 \, 
   {x_4}^{11}  \\ & &-27032170704374631808506232904135459706304200757258747894928587117499 \, 
   {x_4}^{10}+\cdots   \\ & &+52958437343374493285824611500843861525218627552705339462466929126522880 \, 
   {x_4}  \\ & & -3633544639518951458129167566718404600177262066660959706336805062451200. 
 \end{array}  
      \end{equation*}
 By solving the first equation $h_1=0$ for $x_4$ numerically we obtain exactly  four real solutions which are approximately given by $x_4 \approx 0.1882101376884833$, $x_4  \approx 0.3421847475947193$, $x_4  \approx 1.334632880397468$ and  $x_4  \approx 1.601718258421132$. 
     We substitute these values for $ x_4 $  into  the second and  third   
equation $h_2=0$, $h_3=0$ and  we get  real solutions of the equations (\ref{einstein_equations})  which are approximately given by 
 \begin{equation*} \begin{array}{ll} 
1) & \ 
 x_1 =1, 
  x_2 \approx 0.7945133013133368, x_3 \approx 0.6083856170340604, 
  x_4 \approx 0.1882101376884833, x_5 =1,\\
2)  & \
  x_1=1, x_2 \approx 1.366407998279779, 
  x_3 \approx 1.632222678282746, 
  x_4 \approx 0.3421847475947193, 
  x_5 =1,\\
 3) & \
  x_1=1, x_2 \approx 0.7499994899122792, 
  x_3 \approx 0.6673176327222041, 
  x_4 \approx 1.334632880397468, x_5 =1,\\
 4)  & \
x_1=1, x_2 \approx 1.590451006762520, 
  x_3 \approx 1.633523267052982, x_4 \approx 1.601718258421132, x_5=1. 
  \end{array}  
    \end{equation*}   
We substitute these values for $\{ x_1, x_2, x_3, x_4, x_5 \}$  into (\ref{ricci_5compE6}) and get 
 \begin{equation*} \begin{array}{ll} 
1)  \ 
 r_1 = r_2 = r_3 = r_4 = r_5 \approx  0.4957209368544092, \quad &
2)   \
   r_1 = r_2 = r_3 = r_4 = r_5 \approx  0.2949577540873313, \\
 3)  \
   r_1 = r_2 = r_3 = r_4 = r_5 \approx  0.4702440377042893, \quad &
 4)  \
 r_1 = r_2 = r_3 = r_4 = r_5 \approx  0.2646548256739946. \\
  \end{array}  
    \end{equation*}   
Thus,  in this case we obtain four Einstein metrics with Einstein constant 1: 
\begin{equation*} \begin{array}{ll} 
1) & \ 
 x_1 \approx 0.49572094, 
  x_2 \approx 0.39385688, x_3 \approx 0.30158949, 
  x_4 \approx 0.093299706, x_5 \approx 0.49572094,\\
2)  & \
  x_1 \approx 0.29495775, x_2 \approx 0.40303263, 
  x_3 \approx 0.48143674, 
  x_4 \approx 0.10093004, 
  x_5  \approx 0.29495775,\\
 3) & \
  x_1 \approx  0.47024404, x_2 \approx 0.35268279, 
  x_3 \approx 0.31380214, 
  x_4 \approx 0.62760315, x_5  \approx  0.47024404,\\
 4)  & \
x_1 \approx 0.26465483, x_2 \approx 0.42092053, 
  x_3 \approx 0.43231982, x_4 \approx 0.42390247, x_5 \approx 0.26465483. 
  \end{array}  
    \end{equation*}   
    
  {\bf Case of $x_5 \neq x_1$.}  We normalize our equations by setting $x_1 =1$. 
 We see that the system of polynomial equations (\ref{einstein_equations}) reduces to the following system of polynomial equations: 
\begin{equation} \label{equ_E6_x1neqx5}
\left.
\begin{array}{l} 
p_1= -8 {x_2}^3 {x_4} {x_5}-2 {x_2}^3
   {x_4}-{x_2}^2 {x_3} {x_4}^2+24 {x_2}^2
   {x_3} {x_4} {x_5}-{x_2}^2 {x_3}
   {x_5}^2+{x_2}^2 {x_3}-4 {x_2} {x_3}^2
   {x_4} {x_5} \\ +2 {x_2} {x_3}^2 {x_4}-24
   {x_2} {x_3} {x_4} {x_5}+2 {x_2} {x_4}
   {x_5}^2+8 {x_2} {x_4} {x_5}+{x_3}
   {x_4}^2 {x_5}=0, \\
 p_2=5 {x_2}^3 {x_5}+5 {x_2}^3-24
   {x_2}^2 {x_5}-5 {x_2} {x_3}^2 {x_5}-5
   {x_2} {x_3}^2+24 {x_2} {x_3} {x_5}+{x_2}
   {x_5}^2+{x_2} {x_5}-{x_3} {x_4} {x_5}=0, \\
p_3=-3{x_2}^3 {x_4} {x_5}-3 {x_2}^3 {x_4}-4
   {x_2}^2 {x_3} {x_4}^2+4 {x_2}^2 {x_3}
   {x_5}^2-12 {x_2}^2 {x_3} {x_5}+4 {x_2}^2
   {x_3}+24 {x_2}^2 {x_4} {x_5} \\
   +3 {x_2}
   {x_3}^2 {x_4} {x_5}+3 {x_2} {x_3}^2
   {x_4}-3 {x_2} {x_4} {x_5}^2-3 {x_2}
   {x_4} {x_5}-6 {x_3} {x_4}^2 {x_5}=0, \\
p_4=  3{x_2}^2 {x_4}-12 {x_2} {x_3} {x_4}+{x_2}
   {x_3} {x_5}+{x_2} {x_3}+3 {x_3}^2
   {x_4}+3 {x_4} {x_5} =0.  \end{array}  
    \right\} 
    \end{equation}  
    To find non zero solutions of equations (\ref{equ_E6_x1neqx5}),
    we consider a polynomial ring $R_2= {\mathbb Q}[y, x_2, x_3, x_4, x_5] $ and an ideal $I_2$ generated by 
$$\{ p_1, \, p_2,  \, p_3,  \, p_4,  \, y x_2  x_3  x_4 x_5  -1\}. 
$$
We take a lexicographic order $>$  with $ y >  x_2  >  x_5 >  x_3 >  x_4$ for a monomial ordering on $R_2$. Then 
a  Gr\"obner basis for the ideal $I_2$ contains a polynomial $$(5 {x_4}-22) (5 {x_4}-14) (17 {x_4}-22) (19 {x_4}-14)q_1, $$
 where  
    \begin{equation} \label{equ_E6_x5neq1_groebner}
\begin{array}{lll}  
& & q_1 = 25684944948354308203125 {x_4}^{24} 
   -312330714783423219879187500
   {x_4}^{23} \\ 
   & &-14789576030598686784365775000 {x_4}^{22} 
   +169312435225853499159893370000 
   {x_4}^{21}+\cdots \\ 
   & &-597859726821790689492624998400
   {x_4}^4 +84059799581674625557541683200
   {x_4}^3 \\ 
   & &-2979131989754489205686272000
   {x_4}^2 -1842910805533143334912000000
   {x_4} \\ 
   & &+333622121893933875200000000. 
\end{array}
  \end{equation}
   For the case when  $(5 {x_4}-22) (5 {x_4}-14) (17 {x_4}-22) (19 {x_4}-14) = 0$,  
      we consider  ideals $I_3$, $I_4$, $I_5$, $I_6$ of  the polynomial ring $R_2= {\mathbb Q}[y, x_2, x_3, x_4, x_5] $ generated by 
\begin{eqnarray*}
\{ p_1, \, p_2,  \, p_3,  \, p_4,  \,y,  \, x_2  x_3  x_4 x_5  -1, 5 {x_4}-22 \},  & 
 \{ p_1, \, p_2,  \, p_3,  \, p_4,  \,y,  \, x_2  x_3  x_4 x_5  -1, 5 {x_4}-14 \}, \\
\{ p_1, \, p_2,  \, p_3,  \, p_4,  \,y,  \, x_2  x_3  x_4 x_5  -1, 17 {x_4}-22 \}, &
\{ p_1, \, p_2,  \, p_3,  \, p_4,  \,y,  \, x_2  x_3  x_4 x_5  -1, 17 {x_4}-14 \}
\end{eqnarray*}
 respectively. 

We take a lexicographic order $>$  with $ y >  x_2  >  x_5 >  x_3 >  x_4$ for a monomial ordering on $R_2$. Then 
 Gr\"obner bases for the ideals $I_3$, $I_4$, $I_5$, $I_6$ contain polynomials  
\begin{eqnarray*}
\{5 {x_4}-22,5 {x_3}-6,5 {x_5}-17,5 {x_2}-11 \},  & \{5 {x_4}-14,5 {x_3}-12,5 {x_5}-19,5 {x_2}-7 \}, \\
\{17 {x_4}-22,17 {x_3}-6,17 {x_5}-5,17 {x_2}-11 \}, & \{19 {x_4}-14,19 {x_3}-12,19 {x_5}-5,19 {x_2}-7 \}. 
\end{eqnarray*}
 respectively. Thus we obtain  the following solutions of equations (\ref{equ_E6_x1neqx5}): 
  
 \begin{equation*} 
 \begin{array}{ll} 
\displaystyle  1) \   x_1 = 1, {x_2}= \frac{11}{5},  {x_3}= \frac{6}{5}, {x_4}= \frac{22}{5}, {x_5}= \frac{17}{5},  & \displaystyle  2)  \  x_1 = 1, {x_2}= \frac{7}{5},  {x_3}= \frac{12}{5}, {x_4}= \frac{14}{5}, {x_5}= \frac{19}{5}, 
\\ &
\\ 
  \displaystyle  3)  \ x_1 = 1,  {x_2}= \frac{11}{17},  {x_3}= \frac{6}{17},  {x_4}= \frac{22}{17},  {x_5}= \frac{5}{17}, 
 &   \displaystyle  4) \ x_1 = 1,  {x_2}= \frac{7}{19},  {x_3}= \frac{12}{19},  {x_4}= \frac{14}{19}, {x_5}= \frac{5}{19}.
\end{array}
 \end{equation*} 
We normalize  these solutions as follows: 
\begin{eqnarray*}  \displaystyle  1) \  x_1 = 5, {x_2}= 11, {x_3}=6, {x_4}= 22, {x_5}= 17,   \quad  \displaystyle  2) \ x_1 = 5, {x_2}= 7, {x_3}=12, {x_4}= 14, {x_5}= 19, \\ 
  \displaystyle  3) \  x_1 = 17,  {x_2}= 11, {x_3}= 6,  {x_4}= 22, {x_5}= 5, \quad 
   \displaystyle 4) \ x_1 = 19,  {x_2}= 7, {x_3}= 12, {x_4}= 14, {x_5}= 5.
\end{eqnarray*}
and we get K\"ahler Einstein metrics for these values of $x_i$'s. Note that  the metrics corresponding to the cases 1) and 3) are isometric and the cases 2) and 4) are isometric.

For the case when  $q_1 = 0$ and $(5 {x_4}-22) (5 {x_4}-14) (17 {x_4}-22) (19 {x_4}-14) \neq 0$,   
      we consider  a ideal $I_7$ of  the polynomial ring $R_2= {\mathbb Q}[y, x_2, x_3, x_4, x_5] $ generated by 
$$
\{ p_1, \, p_2,  \, p_3,  \, p_4,   \, y (5 {x_4}-22) (5 {x_4}-14) (17 {x_4}-22) (19 {x_4}-14) x_2  x_3  x_4 x_5  -1  \}. $$
We take the same lexicographic order $>$  with $ y >  x_2  >  x_5 >  x_3 >  x_4$ for a monomial ordering on $R_2$.
Then 
a  Gr\"obner basis for the ideal $I_7$ contains the polynomial $q_1$ and polynomials of the form  
\begin{eqnarray} b_2 x_2 + v_2(x_4), \quad  b_3 x_3 + v_3(x_4), \quad  b_5 x_5 + v_5(x_4)
\end{eqnarray}
where $b_2, b_3, b_5$ are positive integers and $v_2(x_4), v_3(x_4), v_5(x_4)$ are polynomials  of degree 23 with integer coefficients. 

 By solving the equation $q_1=0$ for $x_4$ numerically, we obtain exactly  6 positive solutions, 8 negative solutions and 10 non-real  solutions. The 6 positive solutions are approximately given by 
 \begin{eqnarray*}  & & 1) \, x_4 \approx 1.157018562397866, \quad 2) \, x_4 \approx 2.075646788197390, \quad 3) \, x_4 \approx  2.145057741729789, \\ & & 
 4) \,  x_4 \approx 2.163849575049888,\quad  5) \,  x_4 \approx  12.97930323340096, \quad 6) \, x_4 \approx 12207.19468694106.
 \end{eqnarray*}
     We substitute the values for $ x_4 $  into the equations  $b_2 x_2 + v_2(x_4)=0, \ b_3 x_3 + v_3(x_4)=0, \  b_5 x_5 + v_5(x_4)=0$. Then we obtain the following values approximately: 
     \begin{eqnarray*}   
&& 1) \, x_4 \approx 1.15702,  x_2 \approx  0.641194, x_3 \approx  0.566074, x_5 \approx  0.557426,\\
&& 2) \, x_4 \approx 2.07565, x_2 \approx  1.15028, x_3 \approx  1.01551, x_5 \approx  1.79396, \\
&& 3) \, x_4 \approx  2.14506, x_2 \approx  8.87367, x_3 \approx  33.3409, x_5 \approx  -1.12628, \\ 
&& 4) \,  x_4 \approx 2.16385, x_2 \approx  27.3523, x_3 \approx  7.26471, x_5 \approx  -1.16127, \\
&& 5) \,  x_4 \approx  12.9793, x_2 \approx  1.3699, x_3 \approx  5.42602, x_5 \approx  -1.49194, \\
&& 6) \, x_4 \approx 12207.2, x_2 \approx  18.0447, x_3 \approx  1.46532, x_5 \approx  -221.833.
 \end{eqnarray*}
Thus we see that only the cases 1) and 2) correspond to Einstein metrics. 
We substitute these values for $\{ x_1, x_2, x_3, x_4, x_5 \}$  into (\ref{ricci_5compE6}) and get 
 \begin{equation*} \begin{array}{ll} 
1)  \ 
 r_1 = r_2 = r_3 = r_4 = r_5 \approx  0.31855, \quad &
2)   \
   r_1 = r_2 = r_3 = r_4 = r_5 \approx  0.571467. 
  \end{array}  
    \end{equation*}   
Thus we obtain two Einstein metrics with Einstein constant 1: 
\begin{equation*} \begin{array}{ll} 
1) & \ 
 x_1 \approx 0.31855, 
  x_2 \approx 0.366421, x_3 \approx 0.323492, 
  x_4 \approx 0.661198, x_5 \approx 0.571467,\\
2)  & \
 x_1 \approx 0.571467, 
  x_2 \approx 0.366421, x_3 \approx 0.323492, 
  x_4 \approx 0.661198, x_5 \approx 0.31855. 
  \end{array}  
    \end{equation*}   
Now we see that these two metrics are isometric. 

  \begin{theorem}\label{E_6} The flag manifold $\E_6/(\SU(4)\times \SU(2)\times \U(1)\times \U(1) )$ admits exactly seven $\E_6$-invariant Einstein metrics up   to isometry. There are two K\"ahler-Einstein metrics  {\em(}up to scalar{\em )} given by 
    \begin{eqnarray*}   \{ x_1 = 5, {x_2}= 7, {x_3}=12, {x_4}= 14, {x_5}= 19 \},  \quad \{x_1 = 5, {x_2}= 11, {x_3}=6, {x_4}= 22, {x_5}= 17 \}. 
  \end{eqnarray*} 
 The other five are non-K\"ahler. These metrics are given approximately by 
     \begin{eqnarray*}   
&& \{ x_1 \approx 0.571467, 
  x_2 \approx 0.366421, x_3 \approx 0.323492, 
  x_4 \approx 0.661198, x_5 \approx 0.31855\},  \\
& & \{x_1 \approx 0.49572094, 
  x_2 \approx 0.39385688, x_3 \approx 0.30158949, 
  x_4 \approx 0.093299706, x_5 \approx 0.49572094\}, \\
& & \{
  x_1 \approx 0.29495775, x_2 \approx 0.40303263, 
  x_3 \approx 0.48143674, 
  x_4 \approx 0.10093004, 
  x_5  \approx 0.29495775\}, \\
& & \{
  x_1 \approx  0.47024404, x_2 \approx 0.35268279, 
  x_3 \approx 0.31380214, 
  x_4 \approx 0.62760315, x_5  \approx  0.47024404\}, \\
 & & \{
x_1 \approx 0.26465483, x_2 \approx 0.42092053, 
  x_3 \approx 0.43231982, x_4 \approx 0.42390247, x_5 \approx 0.26465483\}. 
  \end{eqnarray*}   
  \end{theorem}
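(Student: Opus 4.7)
The plan is to reduce the Einstein condition $r_1=r_2=r_3=r_4=r_5=\lambda$ to a polynomial system in the four ratios $(x_2,x_3,x_4,x_5)$ after the scale normalization $x_1=1$, and then to exploit the factorization structure of this system together with Gr\"obner bases to enumerate all positive real solutions. Starting from the explicit expressions (\ref{ricci_5compE6}) for the Ricci components (in which the structure constants $\genfrac{[}{]}{0pt}{}{3}{12}=2$, $\genfrac{[}{]}{0pt}{}{4}{22}=1$, $\genfrac{[}{]}{0pt}{}{5}{23}=2$, $\genfrac{[}{]}{0pt}{}{5}{14}=\tfrac13$ have already been computed in Section~6), I would first clear denominators in the four Einstein equations (\ref{einstein_equations}) to obtain four polynomials in $\bb{Q}[x_2,x_3,x_4,x_5]$.

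The first key observation is that the equation $r_1-r_5=0$ factors as
\[
(x_1-x_5)\bigl(x_1x_2x_3+3x_1x_4x_5+3x_2^2x_4-12x_2x_3x_4+x_2x_3x_5+3x_3^2x_4\bigr)=0,
\]
so the analysis splits into the subcase $x_5=x_1$ (setting also $x_1=1$) and the subcase where the second factor vanishes. In the subcase $x_5=x_1$, the remaining three equations become the system $f_1=f_2=f_3=0$ of (\ref{equ_E6_x1=1=x5}). I would saturate by $x_2x_3x_4$ (introducing an auxiliary variable $y$ with $y\,x_2x_3x_4-1=0$) and compute a lexicographic Gr\"obner basis with ordering $y>x_2>x_3>x_4$; the basis then contains an elimination polynomial $h_1(x_4)$ of degree $12$, together with two further generators expressing $x_2$ and $x_3$ as rational functions of $x_4$. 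Counting real roots of $h_1$ numerically (Sturm sequences or Descartes' rule suffice in principle) yields exactly four positive real solutions, each of which back-substitutes to an Einstein metric.

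In the subcase $x_5\neq x_1$, after clearing denominators and eliminating the known factor I obtain the four polynomials $p_1,\dots,p_4$ of (\ref{equ_E6_x1neqx5}). The same saturation and lex Gr\"obner basis computation (now with $y>x_2>x_5>x_3>x_4$) produces an elimination polynomial of the form
\[
(5x_4-22)(5x_4-14)(17x_4-22)(19x_4-14)\,q_1(x_4)=0,
\]
where $q_1$ has degree $24$. The four linear factors correspond, by computing the associated ideals $I_3,\dots,I_6$, to four solutions whose coordinates are all rational; these are precisely the two K\"ahler--Einstein metrics predicted in Section~5 (each appearing twice due to the swap $x_1\leftrightarrow x_5$, giving one isometric pair each). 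For the factor $q_1$ I would isolate the real roots numerically, discard those for which the back-substituted values of $x_2,x_3,x_5$ are not all positive, and find exactly two admissible roots, which moreover yield two solutions related by $x_1\leftrightarrow x_5$ and hence a single isometry class.

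Finally, I would collect all solutions: four from the branch $x_1=x_5$, two K\"ahler--Einstein classes from the linear factors, and one non-K\"ahler class from $q_1$, for a total of seven Einstein metrics up to isometry (five non-K\"ahler, two K\"ahler). The main obstacle is the Gr\"obner basis computation and the subsequent root-counting: the polynomial $q_1$ of degree $24$ has coefficients with roughly $70$ digits, so certifying that exactly two of its real roots give \emph{positive} values for all of $x_2,x_3,x_5$ requires either interval arithmetic or a careful sign analysis of the associated polynomials $v_2,v_3,v_5$ on each real root. A secondary subtlety is verifying that the apparent duplication $x_1\leftrightarrow x_5$ genuinely corresponds to an isometry of $G/K$; this follows from the symmetry of the painted Dynkin diagram (an outer automorphism of $\E_6$ as used in Theorem~\ref{isometry}), which swaps $\fr{n}_1$ with $\fr{n}_5$ while fixing the remaining modules setwise.
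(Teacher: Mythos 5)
Your proposal follows essentially the same route as the paper's proof: the factorization of $r_1-r_5$ into the cases $x_5=x_1$ and $x_5\neq x_1$, the normalization $x_1=1$, the saturated lexicographic Gr\"obner bases yielding the degree-$12$ elimination polynomial $h_1(x_4)$ (four positive roots) and the factored polynomial $(5x_4-22)(5x_4-14)(17x_4-22)(19x_4-14)q_1(x_4)$ with $q_1$ of degree $24$ (linear factors giving the two K\"ahler--Einstein classes, $q_1$ giving one further non-K\"ahler class after discarding roots with non-positive back-substituted coordinates), for a total of seven classes. This matches the paper's argument, including the identification of the $x_1\leftrightarrow x_5$ pairs as isometric metrics.
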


 \underline{\bf Case of $\E_{7}$ : Type A.} 
 
  The  components $r_i$  ($ i =1, \cdots, 5$) of the Ricci tensor
  for a   $G$-invariant Riemannian metric { (\ref{eqAA})}  on $G/K$ are given as follows: 
   \begin{equation}\label{ricci_5compE7} 
   \left. \begin{tabular}{l}
   $ \displaystyle r_1   =  \frac{1}{2 x_1} + \frac{5}{36} \Big( \frac{x_1}{x_2 x_3}- \frac{x_2}{x_1 x_3}- \frac{x_3}{x_1 x_2}\Big)  +  \frac{1}{72} \Big( \frac{x_1}{x_4 x_5}- \frac{x_5}{x_1 x_4}- \frac{x_4}{x_1 x_5}\Big),$ 
\\ \\
 $ r_2 \displaystyle  =\frac{1}{2 x_2}+ \frac{1}{24} \Big( \frac{x_2}{x_1 x_3}- \frac{x_1}{x_2 x_3}- \frac{x_3}{x_1x_2}\Big)  -\frac{1}{72}\frac{x_4}{{x_2}^2} +  \frac{1}{24} \Big( \frac{x_2}{x_3 x_5}- \frac{x_5}{x_2 x_3} - \frac{x_3}{x_2 x_5}\Big),$ \\ \\
  $ r_3 \displaystyle  =\frac{1}{2x_3}+  \frac{1}{18} \Big( \frac{x_3}{x_1x_2}- \frac{x_2}{x_1x_3} - \frac{x_1}{x_2x_3}\Big)  +  \frac{1}{18} \Big( \frac{x_3}{x_2 x_5}- \frac{x_5}{x_2 x_3}- \frac{x_2}{x_3 x_5}\Big),$ 
  \\
 \\
 $ r_4 \displaystyle  = \frac{1}{2 x_4}+  \frac{1}{12} \Big( \frac{x_4}{x_1x_5}- \frac{x_5}{x_1x_4}- \frac{x_1}{x_4 x_5}\Big) + \frac{5}{36}  \Big(- \frac{2}{x_4} +  \frac{x_4}{{x_2}^2} \Big),$ 
 \\ 
\\
   $ r_5 \displaystyle  = \frac{1}{2x_5} + \frac{5}{36}\Big( \frac{x_5}{x_2 x_3}- \frac{x_2}{x_3 x_5}- \frac{x_3}{x_2 x_5}\Big)  +  \frac{1}{72} \Big( \frac{x_5}{x_1 x_4}- \frac{x_1}{x_4 x_5}- \frac{x_4}{x_1 x_5}\Big).$  
   \end{tabular} \right\}
 \end{equation}
 By using similar method as for the case of $\E_6$ we end up to the following:

  \begin{theorem}\label{E_7} The flag manifold $\E_7/( \U(1)\times \U(6) )$ admits exactly seven $\E_7$-invariant Einstein metrics up   to isometry. There are two K\"ahler-Einstein metrics {\em(}up to scalar{\em )} given by 
    \begin{eqnarray*}    \{  x_1 = 7, {x_2}= 11, {x_3}=18, {x_4}= 22, {x_5}= 29 \},  \quad  \{ x_1 = 7, {x_2}= 17, {x_3}= 10, {x_4}= 34, {x_5}= 27 \}.
  \end{eqnarray*} 
 The other five are non-K\"ahler. These metrics are given approximately by 
     \begin{eqnarray*}   
&& \{ x_1 \approx 0.63931715, 
 x_2 \approx 0.37800271, x_3 \approx 0.34993635, x_4 \approx 0.69900421,  x_5 \approx 0.27564786 \},  \\
& & \{ x_1 \approx 0.52602201, 
  x_2 \approx 0.38291429, x_3 \approx 0.32460549, 
  x_4 \approx 0.060058655, x_5 \approx 0.52602201 \}, \\
& & \{
   x_1 \approx 0.26773609, x_2 \approx 0.42433469, 
  x_3 \approx 0.46801223, 
  x_4 \approx 0.063305828, 
  x_5  \approx 0.2677360 \}, \\
& & \{
  x_1 \approx  0.50711535, x_2 \approx 0.35565283, 
  x_3 \approx 0.33123840, 
  x_4 \approx 0.64238182, x_5  \approx  0.50711535 \}, \\
 & & \{
x_1 \approx 0.24046904, x_2 \approx 0.43874160, 
  x_3 \approx 0.44384361, x_4 \approx 0.39782398, x_5 \approx 0.24046904 \}. 
 \end{eqnarray*}   
  \end{theorem}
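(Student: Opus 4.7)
The plan is to mirror closely the argument used for Theorem \ref{E_6}, adapted to the Ricci components (\ref{ricci_5compE7}). First I would clear denominators in the Einstein system $r_1=r_5,\ r_2=r_3,\ r_3=r_4,\ r_4=r_5$ to obtain a polynomial system in $(x_1,\dots,x_5)$. The expression $r_1-r_5$ should again factor with $(x_1-x_5)$ as a linear factor, because the coefficients $\tfrac{5}{36}$ and $\tfrac{1}{72}$ in $r_1$ and $r_5$ enter symmetrically in $x_1\leftrightarrow x_5$ up to the diagonal terms $\tfrac{1}{2x_i}$. This splits the analysis into the two subcases $x_1=x_5$ and $x_1\neq x_5$, exactly as in the $\E_6$ situation.

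In the subcase $x_1=x_5$, I normalize $x_1=1$ and reduce the system to three polynomial equations in $(x_2,x_3,x_4)$. I then compute a Gr\"obner basis in $\mathbb{Q}[y,x_2,x_3,x_4]$ with respect to a lexicographic order $y>x_2>x_3>x_4$, adjoining the saturation element $y\,x_2 x_3 x_4-1$ to exclude spurious solutions at the origin. The basis should contain a univariate polynomial in $x_4$ together with two polynomials of the shape $b_j x_j+v_j(x_4)$; numerically solving the univariate polynomial and back-substituting should yield exactly four positive real solutions, giving four non-K\"ahler Einstein metrics. These will be the four metrics listed in the theorem with $x_1=x_5$.

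In the subcase $x_1\neq x_5$, normalizing $x_1=1$, I expect the Gr\"obner basis (lex order $y>x_2>x_5>x_3>x_4$, saturation by $y\,x_2 x_3 x_4 x_5-1$) to contain a polynomial factoring as a product of four linear factors in $x_4$ times a large univariate polynomial $q_1(x_4)$, just as for $\E_6$. The four linear factors should produce four explicit rational solutions, which after rescaling give the two K\"ahler--Einstein metrics $\{7,11,18,22,29\}$ and $\{7,17,10,34,27\}$ from Section 5 (each one appearing twice via the $x_1\leftrightarrow x_5$ duality, hence two metrics up to isometry). For the factor $q_1$, I would solve $q_1=0$ numerically, select the positive real roots, and back-substitute into the linear polynomials $b_j x_j+v_j(x_4)$ in the Gr\"obner basis to read off $(x_2,x_3,x_5)$. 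Only those roots producing all positive values for $(x_2,x_3,x_5)$ give genuine Einstein metrics, and among these the involution $x_1\leftrightarrow x_5$ identifies pairs, yielding a single non-K\"ahler metric up to isometry. Combined, this gives $2+4+1=7$ metrics.

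The main obstacle is the Gr\"obner basis for the $x_1\neq x_5$ case: in the $\E_6$ analogue, $q_1$ had degree $24$ with enormous integer coefficients, and the $\E_7$ version—whose coefficients $\tfrac{10}{3},\tfrac{1}{3},\tfrac{10}{9}$ have denominator $9$—will produce even larger integers after clearing. Verifying symbolically that exactly six positive real roots exist and then filtering out those yielding negative $x_j$'s requires careful numerical work with sufficient precision, as does confirming the isometry identifications (which must be read off from the Weyl-group action on the $\mathfrak{t}$-roots to ensure genuinely isometric, not merely coincidentally similar, metrics). Once these computations are carried out, the count of seven $\E_7$-invariant Einstein metrics up to isometry follows.
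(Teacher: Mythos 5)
Your proposal follows essentially the same route as the paper: the paper proves Theorem \ref{E_7} by exactly the method used for $\E_6$ (factor $r_1-r_5$ by $(x_1-x_5)$, split into the cases $x_1=x_5$ and $x_1\neq x_5$, normalize $x_1=1$, compute lex Gr\"obner bases with the saturation element, solve the univariate polynomial numerically, keep positive solutions, and identify the isometric pairs), yielding the four non-K\"ahler metrics with $x_1=x_5$, one non-K\"ahler metric with $x_1\neq x_5$, and the two K\"ahler--Einstein metrics from the rational roots. Your outline, including the count $2+4+1=7$, matches the paper's argument.
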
 
 
Now we consider the cases of $B_{\ell}=\SO(2\ell+1)$ and  $D_{\ell}=\SO(2\ell)$ together. 

\smallskip
  \underline{\bf Case of $\SO(m)$ : Type A.} 
 \smallskip
 
 The  components $r_i$  ($ i =1, \dots, 5$) of the Ricci tensor
  for a   $G$-invariant Riemannian metric { (\ref{eqAA})}  on $G/K = \SO(m)/( \U(1)\times \U(p) \times \SO(m -2 - 2 p) )$ are  now given as follows: 
  \begin{equation}\label{ricci_5compB_ell} 
   \left. \hspace{-5pt} \begin{tabular}{l}
 $ \displaystyle r_1   =  \frac{1}{2 x_1} + \frac{ m - 2 - 2 p}{4( m - 2)} \Big( \frac{x_1}{x_2 x_3}- \frac{x_2}{x_1 x_3}- \frac{x_3}{x_1 x_2}\Big)  +  \frac{ p-1}{4 (m - 2)}\Big( \frac{x_1}{x_4 x_5}- \frac{x_5}{x_1 x_4}- \frac{x_4}{x_1 x_5}\Big),$ 
\\   $ r_2 \displaystyle  =\frac{1}{2 x_2}+ \frac{1}{4 (m - 2)} \Big( \frac{x_2}{x_1 x_3}- \frac{x_1}{x_2 x_3}- \frac{x_3}{x_1x_2}\Big)  -\frac{ p-1 }{4 (m - 2) }\frac{x_4}{{x_2}^2}  +  \frac{1}{4 (m - 2) } \Big( \frac{x_2}{x_3 x_5}- \frac{x_5}{x_2 x_3} - \frac{x_3}{x_2 x_5}\Big),$ \\  
  $ r_3 \displaystyle  =\frac{1}{2x_3}+  \frac{p}{4 (m - 2)} \Big( \frac{x_3}{x_1x_2}- \frac{x_2}{x_1x_3} - \frac{x_1}{x_2x_3}\Big)  +  \frac{p}{4 (m - 2)} \Big( \frac{x_3}{x_2 x_5}- \frac{x_5}{x_2 x_3}- \frac{x_2}{x_3 x_5}\Big),$ 
  \\
 $ r_4 \displaystyle  = \frac{1}{2 x_4}+  \frac{1}{2 (m - 2)} \Big( \frac{x_4}{x_1x_5}- \frac{x_5}{x_1x_4}- \frac{x_1}{x_4 x_5}\Big) + \frac{(m - 2 - 2 p)}{4 (m - 2)}  \Big(- \frac{2}{x_4} +  \frac{x_4}{{x_2}^2} \Big),$ 
 \\ 
   $ r_5 \displaystyle  = \frac{1}{2x_5} + \frac{m - 2 -2 p}{4 (m - 2)}\Big( \frac{x_5}{x_2 x_3}- \frac{x_2}{x_3 x_5}- \frac{x_3}{x_2 x_5}\Big)  +  \frac{ p-1}{4 (m - 2)} \Big( \frac{x_5}{x_1 x_4}- \frac{x_1}{x_4 x_5}- \frac{x_4}{x_1 x_5}\Big).$  
   \end{tabular}   \hspace{-6pt} \right\}
 \end{equation}
 From $r_1-r_5= 0$, we see that 
 \begin{equation*}
 \begin{array}{l}  ({x_1}-{x_5}) 
 \left( (m -2 -2 p) {x_1} {x_4}{x_5} + (m -2 -2 p) {x_2}^2 {x_4} + (m -2 -2 p) {x_3}^2 {x_4}  \right. \\ -2 (m-2) {x_2} {x_3} {x_4} 
 \left. 
 +2 (p-1) {x_1} {x_2} {x_3}+2 (p-1) {x_2} {x_3} {x_5}\right) =0. 
 \end{array}
   \end{equation*} 
   
   {\bf Case of $x_5 =x_1$.}  We normalize our equations by setting $x_1 =1$. 
   We see that the system of polynomial equations (\ref{einstein_equations}) reduces to the following system of polynomial equations: 
      \begin{equation} \label{equ_SO(m)_x1=1=x5}
\left.
\begin{array}{l} 
f_1 =  -(m -2 p){x_2}^3 - (m -4 -2 p) {x_2} {x_3}^2 + (m-2 p) {x_2} 
   \\  +2 (m-2) {x_2}^2 {x_3} 
   -2 (m-2) {x_2} {x_3}-(p-1) {x_2}^2 {x_3}
   {x_4}+(p-1) {x_3} {x_4}=0,\\
  f_2 = -2 (m-2) {x_2}^2+2 (m-2) {x_2} {x_3}+2
   (p+1) {x_2}^3-2 (p+1) {x_2}
   {x_3}^2 \\  +2 (p-1) {x_2}-(p-1) {x_3}
   {x_4}=0,\\
   f_3 =  - (m - 2 - 2 p ) {x_3} {x_4}^2 +2
   (m - 2) {x_2}^2 {x_4}-2 p\, {x_2}^3
   {x_4}-4 (p-1) {x_2}^2 {x_3} 
   \\   +2 p\, {x_2} {x_3}^2 {x_4}-2 p\, {x_2}
   {x_4}-2 \, {x_2}^2 {x_3} {x_4}^2 =0. 
 \end{array}  
    \right\} 
    \end{equation}  
  
  To find non zero solutions of equations (\ref{equ_SO(m)_x1=1=x5}),
    we consider a polynomial ring $R= {\mathbb Q}[y, x_2, x_3, x_4] $ and an ideal $I_1$ generated by 
$$\{ f_1, \,f_2,  \,f_3, \,y \, x_2  x_3  x_4  -1\}. 
$$
 We take a lexicographic order $>$  with $ y > x_2 >  x_4 > x_3$  for a monomial ordering on $R$. Then 
 we see that a  Gr\"obner basis for the ideal $I_1$ contains the following polynomial $h_1(x_3)$ of degree 12 :   
    { \small  
   \begin{equation*} \label{equ_SO(m)_x3}
\begin{array}{l}  
h_1(x_3) = 16 (p+1)^5 \left(-p^2+m p-5 p+2 m-4\right)^2 \left(p^2+4
   p-1\right) \left(p^3+5 p^2-16 m p+35 p+8 m^2-32 m+31\right)
   {x_3}^{12} \\
   -32 (m-2) (p+1)^4 \left(-p^2+m p-5 p+2
   m-4\right) (-5 p^7+5 m p^6-62 p^6-m^2 p^5+116 m p^5-403
   p^5-88 m^2 p^4 \\
   +863 m p^4-1562 p^4+24 m^3 p^3-600 m^2
   p^3+2642 m p^3-3067 p^3+128 m^3 p^2-1226 m^2 p^2+3241 m
   p^2-2570 p^2 \\
   +128 m^3 p-655 m^2 p+1074 m p-557 p-24 m^3+138
   m^2-261 m+162 ) {x_3}^{11}+\cdots \\
   -4 (m-2) (m-2 p-2) (m-2 p) (m-p-1)^3 (m+2 p-2)(56 p^7-100 m p^6+272 p^6+58 m^2 p^5-368 m p^5+448 p^5 \\
   -7 m^3 p^4+64 m^2 p^4-172 m p^4+72 p^4-4 m^4 p^3+100 m^3 p^3-408 m^2 p^3+624 m p^3-352 p^3+m^5 p^2-41 m^4 p^2\\
   +253 m^3p^2-622 m^2 p^2+696 m p^2-304 p^2-2 m^5 p-m^4 p+66 m^3 p-212 m^2 p+248 m p-96 p+2 m^6-15 m^5 \\+40 m^4-40 m^3+16 m ) {x_3} \\
   +(m-2 p-2)^2 (m-2 p)^2 (m-p-1)^4 (m+2 p-2)^2  (4 p^4-4 m p^3+16 p^3+m^2 p^2-8 m p^2+16 p^2-4 m^2 p \\ 
   +16 m p-16 p+2 m^3-12 m^2+24 m-16 ) 
  
      \end{array}
\end{equation*}  
 }
and polynomials of the form  
\begin{eqnarray}\label{equ_SO(m)_x2x3}
 b_2 x_2 + v_2(x_3), \quad  b_3 x_4 + v_3(x_3), 
\end{eqnarray}
where $b_2, b_3$ are  integers depending on $m$ and $p$ and $v_2(x_3), v_3(x_3)$ are polynomials  of degree 11 with integer coefficients depending on $m$ and $p$. 

Note that for $\displaystyle  2 \leq p \leq \frac{m-3}{2}$,  we see that 
 \begin{equation*} 
\begin{array}{l}h_1(0) = (m-2 p-2)^2 (m-2 p)^2 (m-p-1)^4 (m+2 p-2)^2 \times \\ (2 (m- 2 p-2)^3+( p^2+8 p ) (m-2 p-2)^2+4 p^2 (m-2 p-2)+4 p^2 )  > 0 
 \end{array}
\end{equation*}   
and the head coefficient of $h_1(x_3)$ (that is the coefficient of degree 12)  is given by 
 \begin{equation*} 
\begin{array}{l}16 (p + 1)^5 \left (p^2 + 4 p - 1 \right) \left ((p+2) (m-2 p-2)+p^2+p \right)^2 \times \\  \left ( 8 (m - 2 p - 2)^2 +16 p (m - 2 p - 2) +  p^3 + 5 p^2 + 3 p - 1 \right)  > 0.  
 \end{array}
\end{equation*}
 We claim that there exists ${ x_3}^0 > 0$ such that  $h_1({x_3}^0) < 0$. Then we see that  there exist at least two positive solutions of the equation $h_1(x_3) = 0$. For fixed $m$ we divide $p$ into the following 4 cases: 

(1) \  the case when  $\displaystyle  2 \leq p \leq \frac{m}{4}$ 
\quad  
 (2) \  the case when  $\displaystyle  \frac{m}{4}+1 \leq p \leq   \frac{m}{3} $ 

(3) \ 
  the case when   $\displaystyle   \frac{m}{3} +1 \leq p \leq  \frac{3}{8} m$ 
\quad 
 (4)\  the case when  $\displaystyle  \frac{3}{8} m+1  \leq p \leq \frac{m- 3}{2}$.

\medskip 

Case (1). \ We put $\displaystyle {x_3}^0 = \frac{1}{2} + \frac{13}{16 m} - \frac{ 5 p}{ 16 m}$. We claim that for 
$\displaystyle 3 \leq p \leq  \frac{m}{4}$, $h_1({x_3}^0) < 0$. 
Consider the value $\displaystyle h_1(\frac{1}{2} + \frac{13}{16 m} - \frac{ 5 p}{ 16 m})$. We see that 
$$ \displaystyle  h_1(\frac{1}{2} + \frac{13}{16 m} - \frac{ 5 p}{ 16 m}) = - \frac{1}{17592186044416 m^{12}} G_1(m, p),$$
where $G_1(m, p)$ is a polynomial of $m$ and $p$ with integer coefficients of degree 23 for $m$. 
This is given by 
$$G_1(m, p) = \sum_{k =0}^{23} a_k(p) (m- 4 p)^k,$$
where $a_k(p)$ are  polynomials of $p$ with integer coefficients. We expand each $a_k(p)$ by $p-3$  and we see that
these are polynomials  of $p-3$ with positive integer coefficients. For example, we have 
 \begin{equation*} 
\begin{array}{l}
a_0(p) = 178237127754237399126183 (p-3)^{26}
+13265901008221449505213854
   (p-3)^{25} \\
   +468508725568700912318217147
   (p-3)^{24}+10476580337328823577318977524
   (p-3)^{23}+\cdots \\
   +5459366557078936923770981445634359296
   (p-3)^2+102099050788760068306158510149730304
   (p-3)\\+9153796573419518258107893315272704. 
 \end{array}
\end{equation*} 
Thus we see that for $p \geq 3$ and $m- 4 p \geq 0$, $G_1(m, p)$ is positive. For $p =2$, we have that 
 \begin{equation*} 
\begin{array}{l}
G_1(m, 2) = 4947802324992 m^{23}-238731462180864 m^{22}+4683833634979840
   m^{21} \\
   -46281577591734272 m^{20}+202492806617366528
   m^{19}+347599071281676288 m^{18}\\
   -9310980063572787200
   m^{17}+52574830445585235968 m^{16}-150817861595192885248
   m^{15} \\
   +203948015024640884736 m^{14}+24172844877444808704
   m^{13}-443862342994666192896 m^{12} \\
   +385551424965459050496
   m^{11}+234121922151674609664 m^{10}-374049639831778762752
   m^9 \\
   -75293632155127080960 m^8+131014157184763195392
   m^7+41471745352938388224 m^6 \\
   -4230801125626406400
   m^5-3773984791973043456 m^4-724276391563682496
   m^3\\ 
   -67160272036488624 m^2-3137789825780976 m-59372964780228. 
 \end{array}
\end{equation*} 
By expanding  $G_1(m, 2) $ by $m-13$, we obtain that 
 \begin{equation*} 
\begin{array}{l}
G_1(m, 2) = 
 4947802324992 (m-13)^{23}+1240661432991744
   (m-13)^{22}+147959819460935680
   (m-13)^{21} \\ +11163907197560160256
   (m-13)^{20}+598016367241983950848
   (m-13)^{19}+\cdots  \\ +863786663385687333093846137528883728
   (m-13)^2+505363778599954771716113864626795760
   (m-13)\\ +91549876964199619601498344378250268. 
 \end{array}
\end{equation*} 
Thus we obtain that $G_1(m, 2) > 0$ for $m \geq 13$. 

\smallskip

Case (2). \ We put $\displaystyle {x_3}^0 = \frac{19}{50} $. We claim that for 
$\displaystyle  \frac{m}{4}+2 \leq p \leq   \frac{m}{3}$, $h_1({x_3}^0) < 0$.
Consider the value $\displaystyle h_1(\frac{19}{50})$ for $\displaystyle  p = \frac{m}{4} + s$ where $s$ is a positive integer.  We see that 
$$ \displaystyle  h_1( \frac{19}{50}) = - \frac{1}{4096000000000000000000000000} G_2(m, s),$$
where $G_2(m, s)$ is a polynomial of $m$ and $s$ with integer coefficients of degree 14 for $m$. 
We see that the polynomial $G_2(m, s)$ is of the form given by 
$$G_2(m, s) = \sum_{k =0}^{14} b_k(s) (m- 12 s)^k,$$
where $b_k(s)$ are  polynomials of $p$ with integer coefficients. We  see that each 
$b_k(s)$ is a polynomial  of $s-2$ with positive integer coefficients. For example, we have 
{\small 
 \begin{equation*} 
\begin{array}{l}
b_0(s) = 1506786986744786694940025493563375616
   (s-2)^{14}  +36998433298516093734987416088141103104
   (s-2)^{13} \\ +416136307149363560959687947416881856512
   (s-2)^{12}  
   +2842633983062558684587917475200569966592
   (s-2)^{11}+\cdots \\ +57949158057391373824741968217195103125504
   (s-2)^2+14661733981405213296399078855588634951680
   (s-2) \\+1759509038746291790869701479717803130880. 
 \end{array}
\end{equation*} }
Thus we see that, for $s \geq 2$ and $m- 12 s \geq 0$, $G_2(m, s)$ is positive.  Note that $2 \leq s \leq m/12$ and thus 
$p \leq  m/4 + m/12 = m/3$.  

For $s =1$, that is $p = m/4 +1$,  we consider $ \displaystyle  h_1( \frac{21}{50})$. Then we see that 
$$
 \displaystyle  h_1( \frac{21}{50}) = -\frac{9}{4096000000000000000000000000} H_2(m, 1)$$
 where 
 {\small 
 \begin{equation*} 
\begin{array}{l}  H_2(m, 1) = 1054050555264795935559 
   m^{14}-25389815873416469983512 
   m^{13}  -966477257093633919382992 
   m^{12} \\ +46723891545491804668385536 
   m^{11}-692407952396029127541554176 
   m^{10}  +2877161421721862355752550400 
   m^9 \\ +38703106006797200198212583424 
   m^8  -637677740991893898125100711936 
   m^7 \\ +4388384700195221430188604653568 
   m^6  -17925423535989571036538101301248 
   m^5 \\
   +47001713463749690546636544016384 
   m^4 -80077090514342627715801111592960 
   m^3 \\ +85882621352257394136232639856640 
   m^2  -52734119195798771677768817049600 
   m  \\ +14142949365346227611634342297600.  
     \end{array}
\end{equation*} 
}
We see that 
 \begin{equation*} 
\begin{array}{l}  H_2(m, 1) =
   1054050555264795935559 (m-13)^{14}+166447385184776390288226
   (m-13)^{13} \\
   +10952887349716279346365341
   (m-13)^{12}+404197548045208433956000372
   (m-13)^{11}+\cdots \\ +15512402577878159456329789083376125
   (m-13)^2+19060854444302720753441098137077730
   (m-13) \\+1049301029441675428621431247614375. 
 \end{array}
\end{equation*} 
Thus $H_2(m, 1)$ is positive for $ m \geq 13$.

 \smallskip 
 
Case (3). \ We put $\displaystyle {x_3}^0 = \frac{1}{3}$. We claim that for 
$\displaystyle \frac{m}{3} +1 \leq p \leq  \frac{3}{8} m$, $h_1({x_3}^0) < 0$. 
Consider the value $\displaystyle h_1(\frac{1}{3})$ for $\displaystyle  p = \frac{m}{3} + s$ where $s$ is a positive integer.  We see that 
$$ \displaystyle  h_1( \frac{1}{3}) = - \frac{1}{2541865828329} G_3(m, s),$$ 
where $G_3(m, s)$ is a polynomial of $m$ and $s$ with integer coefficients of degree 14 for $m$. 
We see that the polynomial $G_3(m, s)$ is of the form given by 
$$G_3(m, s) = \sum_{k =0}^{14} c_k(s) (m- 24 s)^k,$$
where $c_k(s)$ are  polynomials of $p$ with integer coefficients. We  see that each 
$c_k(s)$ is a polynomial  of $s-1$ with positive integer coefficients. For example, we have  
 \begin{equation*} 
\begin{array}{l}
c_0(s) = 2688886554248702829059568 (s-1)^{14}+28773011904669834888459456
   (s-1)^{13} \\+141508288769505404340266208
   (s-1)^{12}+425988148380665862862038816
   (s-1)^{11}+\cdots\\ +19077956371801372323151872
   (s-1)^2+2309278434711832223023104
   (s-1) \\ +108869460718905531039744. 
 \end{array}
\end{equation*} 
Thus we see that for $s \geq 1$ and $m- 24 s \geq 0$, $G_3(m, s)$ is positive.  Note that $1 \leq s \leq m/24$ and thus 
$m/3 + 1  \leq p \leq  m/3 + m/24 = 3 m/8$.

\medskip 

Case (4). \ We put $q =  m/2 - p$ and $\displaystyle {x_3}^0 =  \frac{4q}{m} - \frac{4}{m} - 8 (\frac{q}{m})^2 + 16\frac{q}{m^2}$. We claim that, for $\displaystyle  2  \leq q \leq  \frac{1}{8} m $, that is, 
$\displaystyle  \frac{3}{8} m \leq p \leq \frac{m}{2}-2$, $h_1({x_3}^0) < 0$. 
Consider the value $\displaystyle h_1(\frac{4 q}{m} - \frac{4}{m} - 8 (\frac{q}{m})^2 + 16\frac{q}{m^2})$. We see that 
$$ \displaystyle  h_1( \frac{4q}{m} - \frac{4}{m} - 8 (\frac{q}{m})^2 + 16\frac{q}{m^2}) = - \frac{16}{ m^{24}} G_4(m, q),$$
where $G_4(m, q)$ is a polynomial of $m$ and $q$ with integer coefficients of degree 30 for $m$. 
We see that the polynomial $G_4(m, q)$ is of the form given by 
$$G_4(m, q) = \sum_{k =0}^{30} u_k(q) (m- 8 q)^k,$$
where $u_k(q)$ are  polynomial of $q$ with integer coefficients. We  see that each 
$u_k(q) $ is a polynomial  of $q-2$ with positive integer coefficients. For example, we have 
{\small 
 \begin{equation*} 
\begin{array}{l}
u_0(q)  = 
26951178076734183104839680
   (q-2)^{38}+2212338096952683249388224512
   (q-2)^{37}\\
   +83992678988503465460710244352
   (q-2)^{36}+1995946208003865782253049085952
   (q-2)^{35}+\cdots\\+229210217524459650051376283663636365312
   (q-2)^2+22366794926378575054826400937697869824
   (q-2)\\+1012881211339770900930920976868179968. 
    \end{array}
\end{equation*} 
}
Thus we obtain that $G_4(m, q) > 0$ for $2 \leq q \leq m/8$, that is, for $\displaystyle  \frac{3}{8} m \leq p \leq \frac{m}{2}-2$. 

Note that, for the case when $m = 2 \ell$, we have $ p \leq \ell -3 = m/2 -3<\frac{m-3}2$, and for the case when $m = 2 \ell +1$, we have $ p \leq \ell -1 = (m-1)/2 -1$. 
We consider the case $ p = \ell -1$ where $m = 2 \ell +1$. We put $\displaystyle {x_3}^0 = \frac{1}{\ell}- \frac{1}{4 \ell^2}$. 
 We see that 
$$ \displaystyle  h_1( \frac{1}{\ell}- \frac{1}{4 \ell^2} ) = - \frac{1}{1048576 \ell^{19}} H_4(\ell),$$
where 
 \begin{equation*} 
\begin{array}{l} H_4(\ell) =28311552 \ell^{25}+127926272 \ell^{24}-3676700672
   \ell^{23}+25529024512 \ell^{22}-99774468096 \ell^{21} \\
   +251607146496
   \ell^{20}-413969008640 \ell^{19}+386872995840 \ell^{18}+11909161728
   \ell^{17}-698697895936 \ell^{16} \\
   +1310915822464
   \ell^{15}-1514173731328 \ell^{14}+1280110627808
   \ell^{13}-839682485472 \ell^{12}+439251246304 \ell^{11} \\
   -185919232072
   \ell^{10}+64111849503 \ell^9-18030414660 \ell^8+4116853866
   \ell^7-755179592 \ell^6 \\+109349551 \ell^5-12168116 \ell^4+999284
   \ell^3-56776 \ell^2+1984 \ell-32. 
     \end{array}
\end{equation*} 
We see that 
 \begin{equation*} 
\begin{array}{l} H_4(\ell) =
28311552 (\ell-3)^{25}+2251292672 (\ell-3)^{24}+81975181312
   (\ell-3)^{23}+1847752916992 (\ell-3)^{22}+\cdots \\  +287547059522662005140
   (\ell-3)^3+97552658701667320160 (\ell-3)^2+21045285340535234500
   (\ell-3) \\ +2167673762760385300, 
  \end{array}
\end{equation*} 
so $ \displaystyle  h_1( \frac{1}{\ell}- \frac{1}{4 \ell^2} ) < 0$ for $\ell \geq 3$. 



 \smallskip
 
 We now take a lexicographic order $>$  with $ y > x_3 >  x_4 > x_2$  for a monomial ordering on $R$. Then we see that 
a  Gr\"obner basis for the ideal $I_1$ contains the following polynomial $h_2(x_2)$ of degree 12 :   
    { \small  
   \begin{equation*} \label{equ_SO(m)_x2}
\begin{array}{l}  
h_2(x_2) = 
16 (p+1)^5 \left(p^2+4 p-1\right) \left(p^3+5 p^2-16 m p+35
   p+8 m^2-32 m+31\right) {x_2}^{12} \\
   -32 (m-2) (p+1)^4 \left(p^5+m p^4+9 p^4-26 m p^3+88 p^3+16 m^2 p^2-172 m p^2+308 p^2+56 m^2 p-222 m p \right.  \\ \left. 
   +207 p-8 m^2+35 m-37\right) {x_2}^{11}+\cdots\\
   -4 (m-2)^4 \left(m^6-2 p m^5-10 m^5-3 p^2 m^4+8 p m^4+31 m^4+8 p^3 m^3+40 p^2 m^3+50 p m^3-18 m^3-2 p^4 m^2 \right.  \\ 
  -56 p^3 m^2-176 p^2 m^2-288 p m^2-54 m^2-3 p^5 m+23 p^4 m+86 p^3 m+226 p^2 m+397 p m+39 m \\   \left. +p^6-6 p^5-23 p^4+52 p^3+27 p^2-94 p+43\right) {x_2} \\
   +(m-2)^4 (m-p-3) \left(m^2-p m-7 m-p^2+6 p+11\right) \left(m^3-2 p m^2-2 m^2+p^3-3 p^2+3 p-1\right). 
 \end{array}
\end{equation*}  
  }
  We claim that the equation  $h_2(x_2) =0$ has at least one positive real root. 
 We write 
$$ \displaystyle h_2(x_2)  =  \sum_{k =0}^{12} b_k(m, p) (-1)^k {x_2}^k.$$
Then $b_k(m, p)$ are  polynomial of $m$ and $p$ with integer coefficients. It is enough to see that $b_k(m, p)$ are positive.  Note that, if we denote by $n_k$ the degree  of $b_k(m, p)$  with respect to $m$, then we see that  $ n_{12} =2$, $n_{11}=3$, $n_{10}=4$, $n_9=5$, $n_8=6$, $n_7=6$, $n_6=6$, $n_5=6$, $n_4=6$, $n_3=6$, $n_2=6$, $n_1=6$, $n_0=6$. We see that each polynomial $b_k(m, p)$ is of the form given by 
$$b_k(m, p) = \sum_{j =0}^{n_k} u_j^k(p) (m- 2 p -3)^j,$$
where $u_j^k(p)$ are  polynomials of $p$ with integer coefficients. Now we  see that each 
$u_j^k(p)$ is a polynomial  of $p-2$ with positive integer coefficients. For example, we have 
\begin{equation*}
\begin{array}{l}  
u_0^0(p)=16 (p-2)^{10}+384 (p-2)^9+4360 (p-2)^8+30712 (p-2)^7+146905
   (p-2)^6+491510 (p-2)^5 \\
   +1149975 (p-2)^4+1839750
   (p-2)^3+1913125 (p-2)^2+1162500 (p-2)+312500. 
    \end{array}
\end{equation*} 
Thus we obtain that $b_k(m, p) > 0$ for $ 2 \leq p  \leq (m-3)/2$. From (\ref{equ_SO(m)_x2x3}), we see that there exists  a real solution for the equation  $h_2(x_2) =0$ and hence, it is a  positive solution of $h_2(x_2) =0$. 
  
  We  take a lexicographic order $>$  with $ y > x_3 >  x_2 > x_4$  for a monomial ordering on $R$. Then we see that 
a  Gr\"obner basis for the ideal $I_1$ contains the following polynomial $h_3(x_4)$ of degree 12 :   
    { \small  
   \begin{equation*} \label{equ_SO(m)_x4}
\begin{array}{l}  
h_3(x_4) = (m-p-1)^4 (p-1)^4 \left(-p^2+m p-5 p+2 m-4\right)^2 \left(4 p^4-4 m p^3+16 p^3+m^2 p^2-8 m p^2 \right. \\
\left. +16 p^2-4 m^2 p+16 m p-16 p+2 m^3-12 m^2+24 m-16\right) {x_4}^{12}\\
-2 (m-2) (m-p-1)^3 (p-1)^3 \left(-p^2+m p-5 p+2 m-4\right) \left(16 p^7-52 m p^6+192 p^6+60 m^2 p^5 -488 m p^5 \right.\\ 
+872  p^5-29 m^3 p^4 +412 m^2 p^4-1624 m p^4+1872 p^4+5 m^4 p^3-118 m^3 p^3+816 m^2 p^3-2056 m p^3 \\
+1688 p^3-7 m^4 p^2+37 m^3 p^2-100 m^2 p^2 +212 m p^2-208 p^2+6 m^5 p-90 m^4 p+542 m^3 p-1588 m^2 p \\
+2248 m p-1232 p+8 m^5-96 m^4+448 m^3 \left. -1024 m^2+1152 m-512\right) {x_4}^{11}+\cdots
\\
-256 (m-2)^4 (m-2 p)^2 (p-1)^3 \left(m^7-2 p m^6-14 m^6-4 p^2 m^5+24 p m^5+76 m^5 \right. +14 p^3 m^4+34 p^2  m^4-110 p m^4 \\ 
   -194 m^4-14 p^4 m^3-130 p^3 m^3-6 p^2 m^3+202 p m^3+204 m^3+4 p^5 m^2+168 p^4 m^2+240 p^3 m^2-456 p^2 m^2+12 p m^2 \\ 
   +32 m^2+5 p^6 m-90 p^5 m-425 p^4 m+180 p^3 m+1043 p^2 m-506 p m-207 m  -4 p^7+6 p^6+160 p^5 +306 p^4 \\ \left.
   -436 p^3-598 p^2+472 p+94\right) {x_4} 
   +256 (m-2)^4 (m-2 p)^2 (m-p-3) (p-1)^4
   \left(m^2-p m-7 m-p^2+6 p+11\right) \times \\
      \left(m^3-2 p m^2-2  m^2+p^3-3 p^2+3 p-1\right). 
  \end{array}
\end{equation*}  
 }

 We claim that the equation  $h_3(x_4) =0$ has at least one positive real root. 
 We write 
$$ \displaystyle h_3(x_4)  =  \sum_{k =0}^{12} c_k(m, p) (-1)^k {x_4}^k.$$
Then $c_k(m, p)$ are  polynomial of $m$ and $p$ with integer coefficients. It is enough to see that $c_k(m, p)$ are positive.  Note that, if we denote by $n_k$ the degree  of $c_k(m, p)$  with respect to $m$, then we see that  $ n_{12} =9$, $n_{11}=10$, $n_{10}=12$, $n_9=12$, $n_8=13$, $n_7=13$, $n_6=13$, $n_5=13$, $n_4=13$, $n_3=13$, $n_2=13$, $n_1=13$, $n_0=12$. We see that each polynomial $c_k(m, p)$ is of the form given by 
$$c_k(m, p) = \sum_{j =0}^{n_k} v_j^k(p) (m- 2 p -3)^j,$$
where $v_j^k(p)$ are  polynomials of $p$ with integer coefficients. Now we  see that each 
$v_j^k(p)$ is a polynomial  of $p-2$ with positive integer coefficients. For example, we have 
\begin{equation*}
\begin{array}{l}  
v_0^0(p)=36864 (p-2)^{14}+1032192 (p-2)^{13}+13805568
   (p-2)^{12}+116398080 (p-2)^{11} \\
   +685359360 (p-2)^{10}+2951944704 (p-2)^9+9503200512
   (p-2)^8+23056224768 (p-2)^7\\
   +42128455680 (p-2)^6 +57473072640 (p-2)^5+57485318400
   (p-2)^4+40820544000 (p-2)^3 \\+19441440000
   (p-2)^2+5558400000 (p-2) +720000000. 
    \end{array}
\end{equation*} 
Thus we obtain that $c_k(m, p) > 0$ for $ 2 \leq p  \leq (m-3)/2$. From (\ref{equ_SO(m)_x2x3}), we see that there exists  a real solution for the equation  $h_3(x_4) =0$ and hence, it is a  positive solution of $h_3(x_4) =0$. 

Therefore we obtain the following
  \begin{theorem}\label{equ_SO(m)_x1=1=x5_p=2sol} 
   The system of equations {\em (\ref{equ_SO(m)_x1=1=x5})} has at least two positive solutions. 
   Thus the flag manifold $M= \SO(m)/(\U(1)\times \U(p) \times \SO(m-2(p+1))$  admits at least two  $ \SO(m)$-invariant non-K\"ahler Einstein metrics for   any $p \geq 3 $,  and for $p=2$ when $m \geq 13$. 
\end{theorem}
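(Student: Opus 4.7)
The plan is to reduce the system (\ref{equ_SO(m)_x1=1=x5}) to a single univariate polynomial in $x_3$ via elimination, then apply the intermediate value theorem to produce at least two positive roots, and finally check that the corresponding values of $x_2$ and $x_4$ stay positive. Concretely, I would compute a Gr\"obner basis of the ideal $I_1 = \langle f_1, f_2, f_3, y\, x_2 x_3 x_4 - 1\rangle$ in $\mathbb{Q}[y,x_2,x_3,x_4]$ under lex order $y > x_2 > x_4 > x_3$; the basis contains a univariate polynomial $h_1(x_3)$ of degree $12$ together with linear polynomials expressing $x_2$ and $x_4$ as rational functions of $x_3$. A nonzero root of $h_1$ is then in bijection with an Einstein metric on $G/K$ with $x_1 = x_5 = 1$.

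The next step is a sign analysis of $h_1$. I would first verify, by direct inspection of the explicit constant term and leading coefficient given in the excerpt, that $h_1(0) > 0$ and that the coefficient of $x_3^{12}$ is positive for $2 \le p \le (m-3)/2$ (each factor is a polynomial in $m$ and $p$ that one writes as a sum of positive terms after expanding around the boundary values). Together with $\deg h_1 = 12$, this already implies $h_1(x_3) \to +\infty$ as $x_3 \to +\infty$. To conclude two positive real roots, it suffices to exhibit a single positive test point $x_3^0$ with $h_1(x_3^0) < 0$.

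The main obstacle is this last sign check, because $h_1$ is a polynomial in three parameters $(x_3, m, p)$ and no single test point works uniformly. I would therefore partition the range $2 \le p \le (m-3)/2$ into four regimes and choose a tailored test point in each:
\begin{itemize}
\item Regime (1), $3 \le p \le m/4$: take $x_3^0 = \tfrac{1}{2} + \tfrac{13 - 5p}{16 m}$, and write $h_1(x_3^0)$ as a negative multiple of a polynomial $G_1(m,p)$ that, after expanding first in $m - 4p$ and then each coefficient in $p - 3$, has positive integer coefficients. The boundary case $p = 2$ is handled separately by expanding the resulting $G_1(m,2)$ in $m - 13$, which is why the hypothesis $m \ge 13$ appears when $p = 2$.
\item Regime (2), $m/4 + 2 \le p \le m/3$: take $x_3^0 = 19/50$, substitute $p = m/4 + s$, and expand in $m - 12 s$ followed by $s - 2$. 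The slice $p = m/4 + 1$ is handled separately with $x_3^0 = 21/50$ and expansion in $m - 13$.
\item Regime (3), $m/3 + 1 \le p \le 3 m/8$: take $x_3^0 = 1/3$, substitute $p = m/3 + s$, and expand in $m - 24 s$ and then $s - 1$.
\item Regime (4), $3m/8 + 1 \le p \le m/2 - 2$: set $q = m/2 - p$, take $x_3^0 = 4q/m - 4/m - 8(q/m)^2 + 16 q/m^2$, and expand in $m - 8 q$ and $q - 2$.
\end{itemize}
In the odd-dimensional boundary case $p = \ell - 1$ with $m = 2\ell + 1$, the choice $x_3^0 = 1/\ell - 1/(4\ell^2)$ combined with an expansion in $\ell - 3$ covers the remaining values.

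Finally, once two positive roots of $h_1$ are produced, I would check that they yield admissible $(x_2, x_4)$. This amounts to showing that the two auxiliary elimination polynomials $h_2(x_2)$ and $h_3(x_4)$ (obtained from $I_1$ under the alternative lex orders $x_3 > x_4 > x_2$ and $x_3 > x_2 > x_4$) each have at least one positive root; by the same technique, writing their coefficients as $\sum_{j} v^k_j(p) (m - 2p - 3)^j$ with $v^k_j(p)$ a polynomial in $p - 2$ with positive integer coefficients shows that all the $\pm$-sign patterns are favorable and the alternating-sign criterion of Descartes guarantees a positive root. Since each positive root of $h_1$ corresponds via the linear relations in the Gr\"obner basis to a unique triple $(x_2, x_3, x_4)$, and the Einstein constant is positive, the two solutions supply two non-isometric non-K\"ahler Einstein metrics, completing the proof.
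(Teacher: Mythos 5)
Your proposal follows essentially the same route as the paper: elimination via Gr\"obner bases with the three lexicographic orders to get $h_1(x_3)$, $h_2(x_2)$, $h_3(x_4)$, the sign checks $h_1(0)>0$ and positive leading coefficient, the same four regimes with the same tailored test points (including the $p=2$, $m\ge 13$ and $p=\ell-1$ boundary cases) to force $h_1(x_3^0)<0$, and the alternating-coefficient analysis of $h_2,h_3$ to ensure positivity of $x_2,x_4$. The only wrinkle is your appeal to Descartes' rule to ``guarantee'' a positive root of $h_2$ and $h_3$ — with twelve sign changes Descartes only bounds the number of positive roots and fixes its parity, so by itself it does not give existence; the correct (and intended) argument, which is how the paper closes the step, is that the linear relations in the Gr\"obner basis produce a \emph{real} value of $x_2$ (resp.\ $x_4$) for each real root of $h_1$, this value is a root of $h_2$ (resp.\ $h_3$), and the strictly alternating coefficients exclude zero and negative roots, so it must be positive.
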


For 
 $p=2$  
it is possible to use a similar analysis as before and sharpen the above result. In fact, as for the case (1)
we see that 
$$ \displaystyle  h_1\left(\frac{1}{2} \left(\frac{20 m}{33}-\frac{493}{198}\right) \right) = - \frac{1}{17385744542547923243473
   3056} K_1(m),$$
where $K_1(m)$ is a polynomial of $m$ with integer coefficients of degree 16.  By expanding  $K_1(m) $ by $m-11$, we obtain that 
 \begin{equation*} 
\begin{array}{l}
K_1(m) =  
 3583035271261716480000 (m-11)^{16}+511250673311500625510400
   (m-11)^{15} \\ 
   +31818296826635532065832960
   (m-11)^{14}+1173386825534099922378817536
   (m-11)^{13} \\
   +29027304327079432810844872704
   (m-11)^{12}+513765537329981469527636631552
   (m-11)^{11} +\cdots \\ 
   +99971319095011447419581739885675068
   (m-11)+15879235006092866105410345517930679. 
 \end{array}
\end{equation*} 
Thus we obtain that $K_1(m) > 0$ for $m \geq 11$. 

We also  see that 

 \begin{equation*} 
\begin{array}{l}  h_1(1) = (m-5)^2 \left(m^2-9 m+19\right)^2  \left( (m-10)^7+24 (m-10)^6+211 (m-10)^5+850 (m-10)^4 \right.  \\ \left. +1620
   (m-10)^3+1448 (m-10)^2+544 (m-10)+72\right). 
   \end{array}
\end{equation*} 
Thus we obtain that $h_1(1) > 0$ for $m \geq 10$.

Together with the above result, 
 we obtain the following:  
    \begin{theorem}\label{equ_SO(m)_x1=1=x5_p=2sol} 
   The flag manifolds $\SO(2\ell +1)/(\U(1)\times\U (2)\times\SO(2\ell -5))$  $(\ell \geq 6)$ and    
 $\SO(2\ell)/(\U(1)\times\U (2)\times\SO(2(\ell -3)))$ $(\ell \geq 7)$
 admit at least four invariant non-K\"ahler Einstein metrics.
 \end{theorem}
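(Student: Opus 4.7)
\medskip
\noindent
\textbf{Proof proposal for Theorem C (i.e.\ Theorem 7.4).} The plan is to continue the Gr\"obner basis analysis of the $x_1=x_5$ branch already used for Theorem~7.3, specialized to $p=2$, and sharpen the sign analysis of the degree--$12$ univariate polynomial $h_1(x_3)$ so as to obtain at least four sign changes (rather than two) along the positive real axis. Set $m=2\ell+1$ in the $B_\ell$ case and $m=2\ell$ in the $D_\ell$ case; the hypotheses $\ell\ge 6$ and $\ell\ge 7$ respectively both translate into $m\ge 13$. Normalize $x_1=x_5=1$ and specialize the system \eqref{equ_SO(m)_x1=1=x5} to $p=2$. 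With the lexicographic order $y>x_2>x_4>x_3$ the Gr\"obner basis of the associated ideal contains (exactly as in the proof of Theorem~7.3) a polynomial $h_1(x_3)$ of degree $12$ together with two linear-in-$(x_2,x_4)$ relations $b_2 x_2+v_2(x_3)=0$ and $b_3 x_4+v_3(x_3)=0$ with integer coefficients depending on $m$.

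The key step is to exhibit four sign changes of $h_1$ on $(0,\infty)$. From the analysis already performed for Theorem~7.3 we have $h_1(0)>0$, positive leading coefficient (so $h_1(x_3)\to+\infty$ as $x_3\to\infty$), and $h_1(x_3^0)<0$ at $x_3^0=\tfrac12+\tfrac{3}{16m}$, valid for $p=2$ and $m\ge 13$. The two additional evaluations displayed at the end of Section~7 yield
\[
h_1(1)>0 \quad (m\ge 10), \qquad
h_1\!\Bigl(\tfrac{10m}{33}-\tfrac{493}{396}\Bigr)<0 \quad (m\ge 11),
\]
both of which I would re-verify by the same expansion-in-a-shifted-basis trick (writing the numerator as a polynomial in $m-11$ with nonnegative integer coefficients) that is already carried out in the excerpt. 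A direct check shows $0<x_3^0<1<\tfrac{10m}{33}-\tfrac{493}{396}$ for $m\ge 13$, so the signs of $h_1$ at the five points $0,x_3^0,1,\tfrac{10m}{33}-\tfrac{493}{396},+\infty$ alternate $+,-,+,-,+$. By the intermediate value theorem $h_1$ has at least four positive real roots $\bar x_3^{(1)}<\bar x_3^{(2)}<\bar x_3^{(3)}<\bar x_3^{(4)}$.

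For each such root I would lift to a full solution of \eqref{equ_SO(m)_x1=1=x5} by setting $\bar x_2=-v_2(\bar x_3)/b_2$ and $\bar x_4=-v_3(\bar x_3)/b_3$, and then check positivity of $\bar x_2$ and $\bar x_4$; this is the same implicit lifting step that underlies Theorem~7.3, and for the four distinct $\bar x_3$'s the resulting quadruples $(1,\bar x_2,\bar x_3,\bar x_4,1)$ are pairwise non-proportional (their $x_3$-coordinates differ). Consequently the four induced $G$-invariant metrics are mutually non-homothetic Einstein metrics. Finally, they are all non-K\"ahler because the two K\"ahler--Einstein metrics on $G/K$ (given in Section~5) satisfy $x_1=p+1=3\ne 4\ell-p-3=x_5$ (for $B_\ell$) and similarly for $D_\ell$, so they live in the complementary branch $x_1\ne x_5$.

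The main obstacle is the sign verification of $h_1$ at $x_3=1$ and at $x_3=\tfrac{10m}{33}-\tfrac{493}{396}$: the polynomials $K_1(m)$ and the factorization of $h_1(1)$ involve coefficients of substantial size, and the positivity argument requires exhibiting a suitable shifted expansion (in $m-11$ and in $m-10$ respectively) with all coefficients nonnegative. This is a finite but delicate computation, analogous to the four-case exhaustive argument in Theorem~7.3 but simpler because only the single parameter $m$ varies. A secondary and easier issue is the lifting positivity, which can be handled by evaluating the Gr\"obner relations on the crude interval estimates $\bar x_3^{(1)}\in(0,x_3^0)$, $\bar x_3^{(2)}\in(x_3^0,1)$, $\bar x_3^{(3)}\in(1,\tfrac{10m}{33}-\tfrac{493}{396})$, $\bar x_3^{(4)}\in(\tfrac{10m}{33}-\tfrac{493}{396},\infty)$ and using the same ``expand in the appropriate shifted basis'' strategy.
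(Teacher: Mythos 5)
Your proposal is correct and follows essentially the same route as the paper: specialize the $x_1=x_5$ Gr\"obner-basis analysis to $p=2$, read off the sign pattern $+,-,+,-,+$ of $h_1$ at $0$, $\tfrac12+\tfrac{3}{16m}$, $1$, $\tfrac{10m}{33}-\tfrac{493}{396}$ and $+\infty$ (these are exactly the paper's evaluations, via $G_1(m,2)>0$ for $m\ge 13$, $K_1(m)>0$ for $m\ge 11$ and the factorization of $h_1(1)$ for $m\ge 10$), and lift each of the four roots through the linear Gr\"obner relations to get four non-K\"ahler Einstein metrics. The only minor difference is how positivity of the lifted $x_2,x_4$ is obtained: the paper gets it for free from the strictly alternating signs of the coefficients of the eliminants $h_2(x_2)$ and $h_3(x_4)$ (so any real root is automatically positive), rather than from interval estimates.
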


Moreover, for small values of $\ell$ and $p$ it is possible to obtain the precise number of invariant
 Einstein metrics  depending on type $B_{\ell}$ and $D_{\ell}$ as follows.

\medskip
\smallskip
{\small{\begin{center}
{\bf Table 4. The number of  non-isometric  homogeneous Einstein metrics} 
\end{center}
\begin{center}
 \begin{tabular}{|c|c|c|c|l|l|}
 \hline
     \ $(\ell, p)$  \  & Non-K\"ahler Einstein  &  Non-K\"ahler Einstein     & K\"ahler Einstein  &  \ \ Generalized flag manifold \\
          &    of type a &    of type b & & \\
                 \hline
    $(3, 2)$  &  2  & 1 &  2  &   $\SO(7)/\U(1)\times \U(2)$ \\ \hline
   $(4, 2)$ & 4 &  2 & 2&   $\SO(9)/\U(1)\times \U(2)\times \SO(3)$ \\
       \hline
   $(4, 3)$     & 2  & 1 &2&   $\SO(9)/\U(1)\times \U(3)$  \\ 
        \hline
     $(5, 2)$    &   4 &  2 & 2 &   $\SO(11)/\U(1)\times \U(2)\times \SO(5)$ \\   \hline
    $(5, 3)$  &    2 &  1 & 2 &   $\SO(11)/\U(1)\times \U(3)\times \SO(3)$  \\    \hline
       $(5, 4)$   &   2 &  1 & 2  &   $\SO(11)/\U(1)\times \U(4)$ \\   \hline
       $(6, 2)$  &     4 &  2 & 2 &   $\SO(13)/\U(1)\times \U(2)\times \SO(7)$ \\
        \hline
       $(6, 3)$  &     2 &  2 & 2 &   $\SO(13)/\U(1)\times \U(3)\times \SO(5)$ \\
        \hline
       $(6, 4)$  &     2 &  1 & 2 &   $\SO(13)/\U(1)\times \U(4)\times \SO(3)$ \\
     \hline
       $(6, 5)$  &     2 &  1 & 2 &   $\SO(13)/\U(1)\times \U(5)$   \\
  \hline
   $(5, 2)$  &  4  & 2 &  2  &   $\SO(10)/\U(1)\times \U(2)\times \SO(4)$ \\ \hline
   $(6, 2)$ & 4 &  2 & 2&   $\SO(12)/\U(1)\times \U(2)\times \SO(6)$ \\
       \hline
   $(6, 3)$     & 2  & 2 &2&   $\SO(12)/\U(1)\times \U(3)\times \SO(4)$  \\ 
        \hline
     $(7, 2)$    &   4 &  2 & 2 &   $\SO(14)/\U(1)\times \U(2)\times \SO(8)$ \\   \hline
    $(7, 3)$  &    4 &  2 & 2 &   $\SO(14)/\U(1)\times \U(3)\times \SO(6)$  \\    \hline
       $(7, 4)$   &   2 &  2 & 2  &   $\SO(14)/\U(1)\times \U(4)\times \SO(4)$ \\   \hline
 \end{tabular}
 \end{center}}}
 \smallskip
  \ {\small Non-K\"ahler Einstein metric of type a means that the metric of the form with $x_1 = x_5$ and 
 Non-K\"ahler Einstein metric of type b means that the metric of the form with $x_1 \neq x_5$.  }  
 
\medskip
\smallskip

 We conjecture that for the classical flag manifolds studied in the present work the {\it total number} of non isometric invariant Einstein metrics is precisely five, six or eight.   Note that two of them are
 K\"ahler-Einstein metrics.

\end{document}